\newcommand{\nc}{\newcommand}
\nc{\dmo}{\DeclareMathOperator}
\dmo{\ra}{\rightarrow}
\dmo{\N}{\mathbb{N}}
\dmo{\Z}{\mathbb{Z}}
\dmo{\Q}{\mathbb{Q}}
\dmo{\R}{\mathbb{R}}
\dmo{\C}{\mathcal{C}}
\dmo{\AC}{\mathcal{AC}}
\dmo{\Mod}{Mod}
\dmo{\PMod}{PMod}
\dmo{\B}{B}
\dmo{\PB}{PB}
\dmo{\I}{\mathcal{I}}
\dmo{\el}{\ell_{\C}}
\dmo{\NN}{\mathcal{N}}
\dmo{\rk}{rk}
\tikzset{->-/.style={decoration={
  markings,
  mark=at position #1 with {\arrow[scale=3]{>}}},postaction={decorate}}}
\nc{\nt}{\newtheorem}
\newtheorem{thm}{{\bf Theorem}}[section]
\newtheorem{cor}[thm]{{\bf Corollary}}
\newtheorem{remark}[thm]{Remark}
\newtheorem{ex}[thm]{Example}
\newtheorem{ques}[thm]{Question}
\numberwithin{equation}{section}
\begin{document}

\title[Braids and periodic solutions of the $2n$-body problem]
{Braids, metallic ratios and periodic solutions of the $2n$-body problem}

\author[Y. Kajihara]{%
Yuika Kajihara}
\address{%
Department of Applied Mathematics and Physics, 
Graduate School of Informatics, Kyoto University, 
Yoshida-Honmachi, Sakyo-ku, Kyoto 606-8501, Japan
}
\email{%
kajihara.yuika.75e@st.kyoto-u.ac.jp
}

\author[E. Kin]{%
    Eiko Kin
}
\address{%
      Center for Education in Liberal Arts and Sciences, Osaka University, Toyonaka, Osaka 560-0043, Japan
}
\email{%
        kin.eiko.celas@osaka-u.ac.jp
}

\author[M. Shibayama]{%
    Mitsuru Shibayama
}
\address{%
Department of Applied Mathematics and Physics, 
Graduate School of Informatics, Kyoto University, 
Yoshida-Honmachi, Sakyo-ku, Kyoto 606-8501, Japan
}
\email{%
        shibayama@amp.i.kyoto-u.ac.jp}

\subjclass[2020]{%
}

\keywords{%
N-body problem, periodic solutions, braid groups, pseudo-Anosov braids, metallic ratio,  stretch factor}

\date{
March 30, 2022
}

\thanks{%
Y. K. was supported by JSPS KAKENHI Grant Number 
JP20J21214. \\
E. K. was supported by JSPS KAKENHI Grant Number 
JP18K03299 and JP21K03247. \\
M. S. was supported by JSPS KAKENHI Grant Number 
JP18K03366. \\
} 
	
\begin{abstract} 
Periodic solutions of the planar $N$-body problem determine braids  through the trajectory of $N$ bodies. 
Braid types can be used to classify periodic solutions.  
According to the Nielsen-Thurston classification of surface automorphisms,   
braids fall into three types: periodic, reducible and pseudo-Anosov. 
To a braid of pseudo-Anosov type, there is an associated stretch factor greater than $1$, 
and this is a conjugacy invariant of braids. 
In 2006, the third author  discovered a family of multiple choreographic solutions of the planar $2n$-body problem. 
We prove that braids obtained from the solutions in the family are of pseudo-Anosov type, 
and their stretch factors  are expressed in metallic ratios. 
New numerical periodic solutions of the planar $2n$-body problem are also provided. 
\end{abstract}
\maketitle

\section{Introduction}

Consider the motion of $m$ points in the plane ${\Bbb R}^2$ 
$${\bm x}(t)= (x_1(t), \dots, x_m(t)),$$ 
where $x_i(t) \in {\Bbb R}^2$ is the position of the $i$th point at $t \in {\Bbb R}$. 
Let $Q_m(t)= \{x_1(t), \dots, x_m(t)\}$. 
We assume the following. 
\begin{itemize}
\item 
 ${\bm x}(t)$ is collision-free, i.e., 
for any $t \in {\Bbb R}$, 
$x_i (t) \ne x_j(t)$ if $i \ne j$.

\item 
There exists  $t_0 >0$ such that 
$$Q_m(t+t_0)=Q_m(t). $$ 
\end{itemize}
Then we have a (geometric) braid 
$$b({\bm x}(t), [0,t_0])= \bigcup_{t \in [0, t_0]} \bigl\{(x_1(t), t), \dots, (x_m(t), t) \bigr\} \subset {\Bbb R}^2 \times [0,t_0]$$
with base points $Q_m(0)(=Q_m(t_0))$. 
The actual location of base points   is irrelevant for the study of braids. 
To remove the data of the location, 
we consider its braid type $\big\langle  b({\bm x}(t), [0,t_0])  \big\rangle$ instead of the braid
(See Section \ref{subsection_geometric-braids} for the definition of braid types).

We investigate  periodic solutions of the planar $N$-body problem 
given by the following ODEs. 
\begin{equation}
\label{equation_ODE}
m_i\ddot{x}_i= - \sum_{j \ne i} m_im_j \frac{x_i - x_j}{ |x_i - x_j|^3}, \hspace{0.5cm} 
x_i \in {\Bbb R}^2, \ m_i > 0\ (i= 1, \dots, N). 
\end{equation}
Suppose that 
${\bm x}(t)= (x_1(t), \dots, x_N(t))$ is a periodic solution with period $T$ of  (\ref{equation_ODE}). 
The solution ${\bm x}(t)$  determines a (pure) braid $b({\bm x}(t), [0,T])$ 
and its braid type $\big\langle b({\bm x}(t), [0,T] \big\rangle$.  
Braid types can be used to classify periodic solutions of the planar $N$-body problem.

\begin{ques}[Montgomery \cite{Montgomery21}, (cf. Moore \cite{Moore93})]
\label{question_Montgomery}
For any pure braid $b $ with $N$ strands, 
is there a periodic solution of the planar $N$-body problem 
whose  braid type is equal to $\langle b \rangle$? 
\end{ques}
Question \ref{question_Montgomery} is wide open for every $N > 3$. 
In the case of $N=3$ Question \ref{question_Montgomery} is true 
by work of Moeckel-Montgomery \cite{MoeckelMontgomery15}. 
For other studies on braids obtained from periodic solutions, 
see a pioneer work by Moore \cite{Moore93}. 
See also \cite{FG21,JamesKatrina13,Montgomery98}. 


\begin{remark} 
\label{remark_Montgomery}
We consider the following Newton equations
\begin{equation}
\label{equation_alpha}
m_i\ddot{x}_i= - \sum_{j \ne i} m_im_j \frac{x_i - x_j}{ |x_i - x_j|^{\alpha+1}}, \hspace{0.5cm} 
x_i \in {\Bbb R}^2, \ m_i > 0\ (i= 1, \dots, N), 
\end{equation}
where $\alpha \ge1$. 
The case $\alpha=2$ corresponds to  (\ref{equation_ODE})  
describing the motion of $n$ bodies under the influence of the gravitation. 
One can ask the same question as Question \ref{question_Montgomery} 
for the planar $N$-body problem given by  (\ref{equation_alpha}). 
It is known by Montgomery \cite{Montgomery98} that 
Question \ref{question_Montgomery} is true  for any ``tied" braid type when $\alpha \ge 3$ (i.e., under the assumption that  the  force is strong). 
\end{remark}

According to the Nielsen-Thurston classification of surface automorphisms \cite{FLP}, 
braids fall into three types: periodic, reducible and pseudo-Anosov. 
(See Section \ref{subsection_Nielsen-Thurston}.) 
To a braid $b$  of  pseudo-Anosov type, there is an associated stretch factor $\lambda(b)>1$,  
and this is a conjugacy invariant of pseudo-Anosov braids. 
Since the Nielsen-Thurston type is also a conjugacy invariant, 
one can define the stretch factor $\lambda(\langle b \rangle):= \lambda(b)$ 
for the pseudo-Anosov braid type $\langle b \rangle$ of $b$. 
See (\ref{equation_braid-type}) in Section \ref{subsection_Nielsen-Thurston}.

The stretch factor  tells us a dynamical complexity of pseudo-Anosov braids. 
In this paper we ask the following question related to Question \ref{question_Montgomery}.

\begin{ques}
\label{question_dilatation}
Let $b$ be a pure braid with $N$ strands. 
Suppose that $b$ is of pseudo-Anosov type. 
Is there a periodic solution of the planar $N$-body problem 
whose  braid type is equal to $\langle b \rangle$? 
\end{ques}


The stretch factor of each pseudo-Anosov braid with $3$ strands is a quadratic irrational (Section \ref{subsection_3braids}). 
This is not necessarily true for pseudo-Anosov braids with more than $3$ strands. 
Moore \cite{Moore93} and Chenciner-Montgomery \cite{ChencinerMontgomery00} found 
a simple choreographic solution to the $3$-body problem 
such that the three bodies chase one another along a figure-$8$ curve. 
The braid type of the solution is pseudo-Anosov  and 
its stretch factor is the $6$th power of the  $1$st metallic ratio $\mathfrak{s}_1$ 
(Example \ref{ex_figure-eight}), 
 i.e., golden ratio, 
where 
the $k$th metallic ratio  $\mathfrak{s}_k$ $(k \in {\Bbb N})$  
is given by 
$$\mathfrak{s}_k=\frac{1}{2} (k+ \sqrt{k^2+4}) = 
k+\cfrac{1}{k +\cfrac{1}{k +\cfrac{1}{k + \ddots}}} \quad$$
\smallskip

The study of braid types of the periodic solutions has been relatively less investigated. 
We hope that 
the following result sheds some light on 
Question \ref{question_dilatation}. 
Let $ \lfloor  \cdot \rfloor$ be the floor function.

\begin{thm}
\label{thm_stretch-factor}
For $n \ge 2$ and $p \in \{1, \dots, \lfloor  \frac{n}{2}\rfloor \}$, 
there exists a periodic solution ${\bm x}_{n,p}(t)$ of the planar $2n$-body problem 
with equal masses
whose braid type $X_{n,p}$ is pseudo-Anosov with the stretch factor 
$(\mathfrak{s}_{2p})^{\frac{2n}{d}}$, 
where $d= \gcd (n,p)$. 
\end{thm}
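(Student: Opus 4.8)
The existence of the family $\{{\bm x}_{n,p}(t)\}$ is not the issue here: I take it from the third author's 2006 construction of multiple choreographies, so the work to be done is entirely the identification of the braid type $X_{n,p}$ and the computation of its stretch factor. The plan is to exploit the symmetry of the solution. Shibayama's solutions carry a rotational symmetry: there should be a rigid rotation $R$ of the plane of order $2n/d$ (with $d=\gcd(n,p)$) and a time shift $s=\tfrac{Td}{2n}$ such that ${\bm x}_{n,p}(t+s)=R\,{\bm x}_{n,p}(t)$ for all $t$. First I would record this equivariance precisely and read off from it the geometric braid over one fundamental time interval $[0,s]$. Because $R$ has order $2n/d$, iterating the symmetry-reduced time-$s$ return map $2n/d$ times returns the rotation to the identity, so at the level of mapping classes of the $2n$-punctured disk the full monodromy $\phi$ over $[0,T]$ factors as a root, $\phi=\rho^{2n/d}$, where $\rho$ is the mapping class obtained by composing the fundamental-domain braid with $R_{*}$. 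This reduces the whole problem to understanding the single class $\rho$.

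Next I would analyze $\rho$ by Nielsen--Thurston theory directly. The goal is to produce a $\rho$-invariant train track $\tau$ carrying a $\rho$-invariant lamination and to compute the transition (incidence) matrix $M$ of the induced map $\rho_{*}$ on the branches of $\tau$. Using the $R$-symmetry to fold $\tau$ into a fundamental domain, I expect $M$ to collapse to a small nonnegative integer matrix whose characteristic polynomial is $x^{2}-2p\,x-1$ up to trivial cyclotomic factors, for instance the companion-type matrix $\left(\begin{smallmatrix} 2p & 1\\ 1 & 0\end{smallmatrix}\right)$; its Perron--Frobenius eigenvalue is exactly $p+\sqrt{p^{2}+1}=\mathfrak{s}_{2p}$. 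The coefficient $2p$ should emerge as the number of essential crossings the fundamental braid contributes along the generating curve, so pinning down the braid word with correct signs in Step~1 is precisely what drives this count.

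To conclude that $\rho$, and hence $\phi$, is genuinely pseudo-Anosov rather than reducible with an accidentally large transition matrix, I would verify that $\tau$ is filling and that $M$ is Perron--Frobenius, i.e.\ irreducible with spectral radius $\lambda(\rho)=\mathfrak{s}_{2p}>1$; irreducibility of $M$ together with the absence of a $\rho$-invariant multicurve rules out the periodic and reducible cases. Granting this, $\phi=\rho^{2n/d}$ is pseudo-Anosov with the same pair of invariant foliations, and by multiplicativity of the stretch factor under powers, $\lambda(X_{n,p})=\lambda(\phi)=\lambda(\rho)^{2n/d}=(\mathfrak{s}_{2p})^{2n/d}$, as claimed. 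As a consistency check, the figure-eight solution should reappear through the same mechanism, with $\mathfrak{s}_{1}$ and power $6$.

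I expect the main obstacle to lie in Steps~1 and~2: extracting the fundamental braid word together with its orientation data from the analytic solution, and verifying that the chosen $\tau$ is actually invariant, that is, that $\rho_{*}$ carries $\tau$ into a track carried by $\tau$. Once a bona fide invariant train track is in hand, the eigenvalue computation and the power law are routine; it is establishing invariance and the filling property, which forces genuine use of the geometry of the specific choreography, where the real difficulty sits.
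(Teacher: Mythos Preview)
The central gap is the symmetry you invoke at the outset. The solutions ${\bm x}_{n,p}(t)$ do \emph{not} satisfy an equivariance of the form ${\bm x}_{n,p}(t+s)=R\,{\bm x}_{n,p}(t)$ with $s=\tfrac{Td}{2n}$ and $R$ a planar rotation of order $2n/d$. The symmetry group $G_{n,p}=\langle g_n,h_{n,p}\rangle$ under which these minimizers are invariant acts on the time circle $\mathbb{R}/T\mathbb{Z}$ only by reflections (coming from $g_n$) and by shifts that are integer multiples of $\tfrac{Td}{n}=2\bar{T}$ (coming from $h_{n,p}$, which moreover has \emph{trivial} spatial part $\rho(h_{n,p})=1$); no element yields a pure time shift by $\bar{T}=\tfrac{Td}{2n}$. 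This is visible on the shape sphere as well: $u_1(t)>0$ for $t\in(0,\bar{T})$ while $u_1(t)<0$ for $t\in(\bar{T},2\bar{T})$, so the two half-intervals are related only through a time-reversing symmetry, not a shift. Hence your factorization $\phi=\rho^{2n/d}$ with $\lambda(\rho)=\mathfrak{s}_{2p}$ cannot be read off from the dynamics. A further warning sign is that your target matrix $\left(\begin{smallmatrix}2p&1\\1&0\end{smallmatrix}\right)$ has determinant $-1$: any such ``square root'' would have to be orientation-reversing and is not represented by a braid.

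What the paper does instead is take the root at the level the symmetry actually provides. The genuine equivariance $x_i\bigl(t+\tfrac{dT}{n}\bigr)=x_{\sigma(h_{n,p}^{-1})(i)}(t)$ gives an $(n/d)$-th root, the $2n$-braid $\beta_{n,p}=u_n^p v_n^p$ over $[0,\tfrac{dT}{n}]$, and the task becomes showing $\lambda(\beta_{n,p})=(\mathfrak{s}_{2p})^{2}$. Rather than constructing a train track on the $2n$-punctured disk directly, the paper recognizes $\beta_{n,p}$ (with the center $0$ marked) as the lift, under an $n$-fold branched cover $\Sigma_{0,2n}\to\Sigma_{0,2}$ branched at two points, of the pure $3$-braid $\sigma_1^{2p}\sigma_2^{-2p}$, whose stretch factor $(\mathfrak{s}_{2p})^2$ is elementary. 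Since the lifted invariant foliations are $n$-pronged at the branch point $0$, forgetting that point leaves the pseudo-Anosov data intact, and $\lambda(X_{n,p})=\lambda(\beta_{n,p})^{n/d}=(\mathfrak{s}_{2p})^{2n/d}$. Your train-track program could in principle be carried out for $\beta_{n,p}$ once the correct root is used, but the covering argument both avoids the bookkeeping and delivers the pseudo-Anosov certificate automatically.
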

A representative of the braid type $X_ {n,p}$ in Theorem \ref{thm_stretch-factor} 
 is the $(\frac{n}{d})$th power $(\beta_{n,p})^{\frac{n}{d}}$ 
 of the $2n$-braid $\beta_{n,p}$ 
introduced in Section \ref{section_proof-of-theorem}. 
In 2006, the third author proved the existence of a family of multiple choreographic solutions ${\bm x}_{n,p}(t)$ 
of the planer $2n$-body problem  with equal masses \cite{Shibayama06}. 
Some of the solutions in the family had already found by Chen \cite{Chen01,Chen03-2} and Ferraio-Terracini \cite{FerrarioTerracini04}. 
The orbit of the periodic solution ${\bm x}_{n,p}(t)$ 
consists of $2d$ closed curves, 
each of which is the trajectory of $\frac{n}{d}$ bodies. 
The braid types $X_{n,p}(t)$ in Theorem \ref{thm_stretch-factor} are realized by  ${\bm x}_{n,p}(t)$ given in  \cite{Shibayama06}. 
More precisely,  
for $n \ge 2$ and  $p \in \{1, \dots, \lfloor  \frac{n}{2}\rfloor \}$, 
there exists 
a periodic solution 
$${\bm x}_{n,p}(t)= (x_1(t), \dots, x_{2n}(t))$$ 
with period $T>0$ of the planar $2n$-body problem such that 
$$x_i(t+  (\tfrac{d}{n})T ) = x_{\sigma_{n,p}(i)}(t) \hspace{2mm} \mbox{for}\hspace{2mm} i= 1, \dots, 2n, $$
where $\sigma_{n,p}= (1,3, \dots, 2n-1)^p (2,4, \dots, 2n)^{-p} \in \mathfrak{S}_{2n}$ is a permutation 
of  $2n$ elements.  
Thus,  the braid 
$y_{n,p}:= b({\bm x}_{n,p}(t), [0, (\frac{d}{n})T ])$ and the braid type $Y_{n,p}:= \langle y_{n,p} \rangle$ 
are obtained from the solution ${\bm x}_{n,p}(t)$, and 
the $(\frac{n}{d})$th power $(y_{n,p})^{\frac{n}{d}}$ represents the braid type $X_{n,p}$. 
See Figure \ref{fig_orbit} for periodic solutions ${\bm x}_{n,p}(t)$ for $0 \le t \le (\frac{d}{n})T$. 
Theorem \ref{thm_stretch-factor} follows from the following (see Remark \ref{remark_power}).

\begin{figure}[b]
\begin{center}

\includegraphics[width=1.8cm]{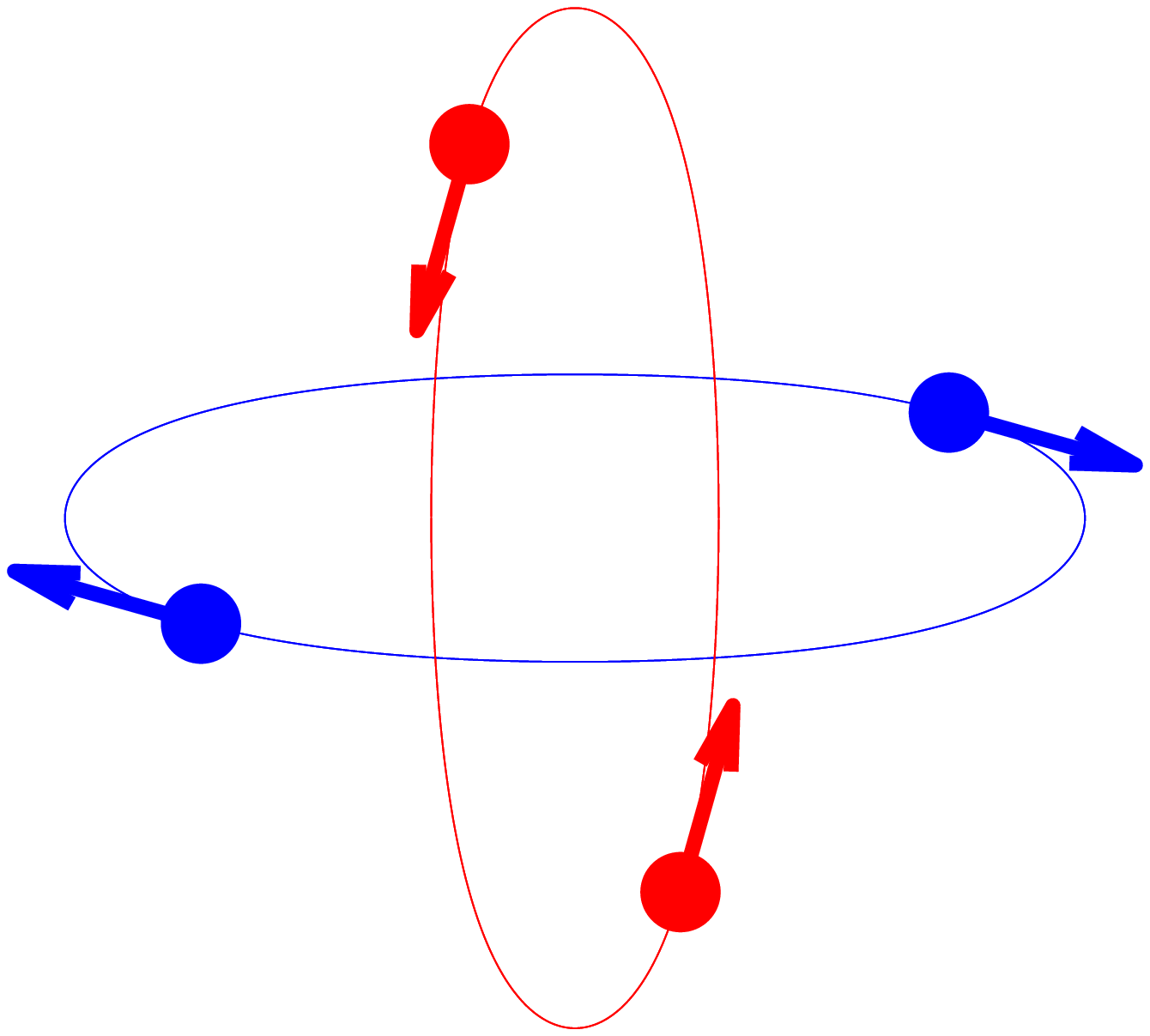}\hspace{0.5cm}
\includegraphics[width=1.8cm]{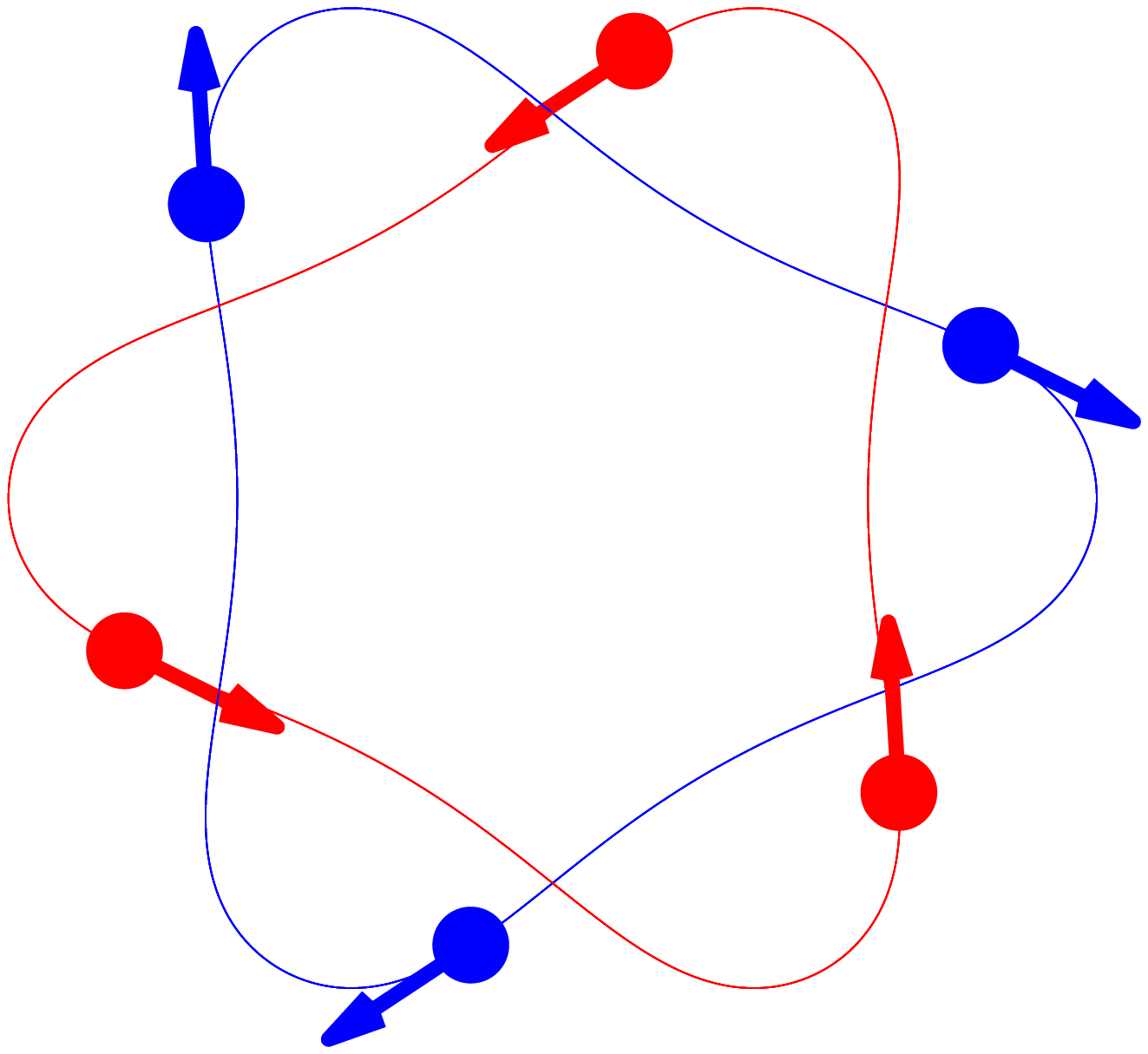}\hspace{0.5cm}
\includegraphics[width=1.8cm]{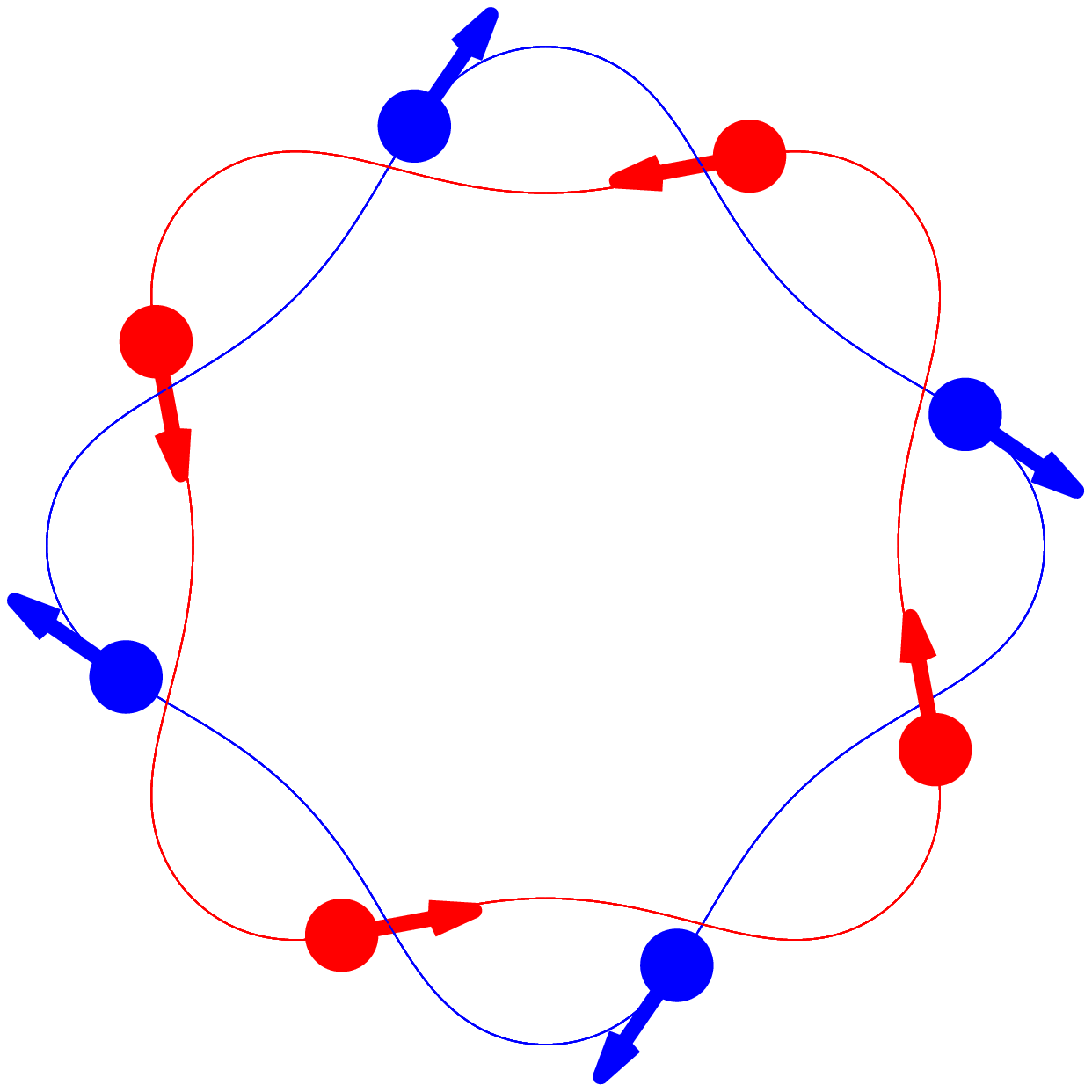}\hspace{0.5cm}
\includegraphics[width=1.8cm]{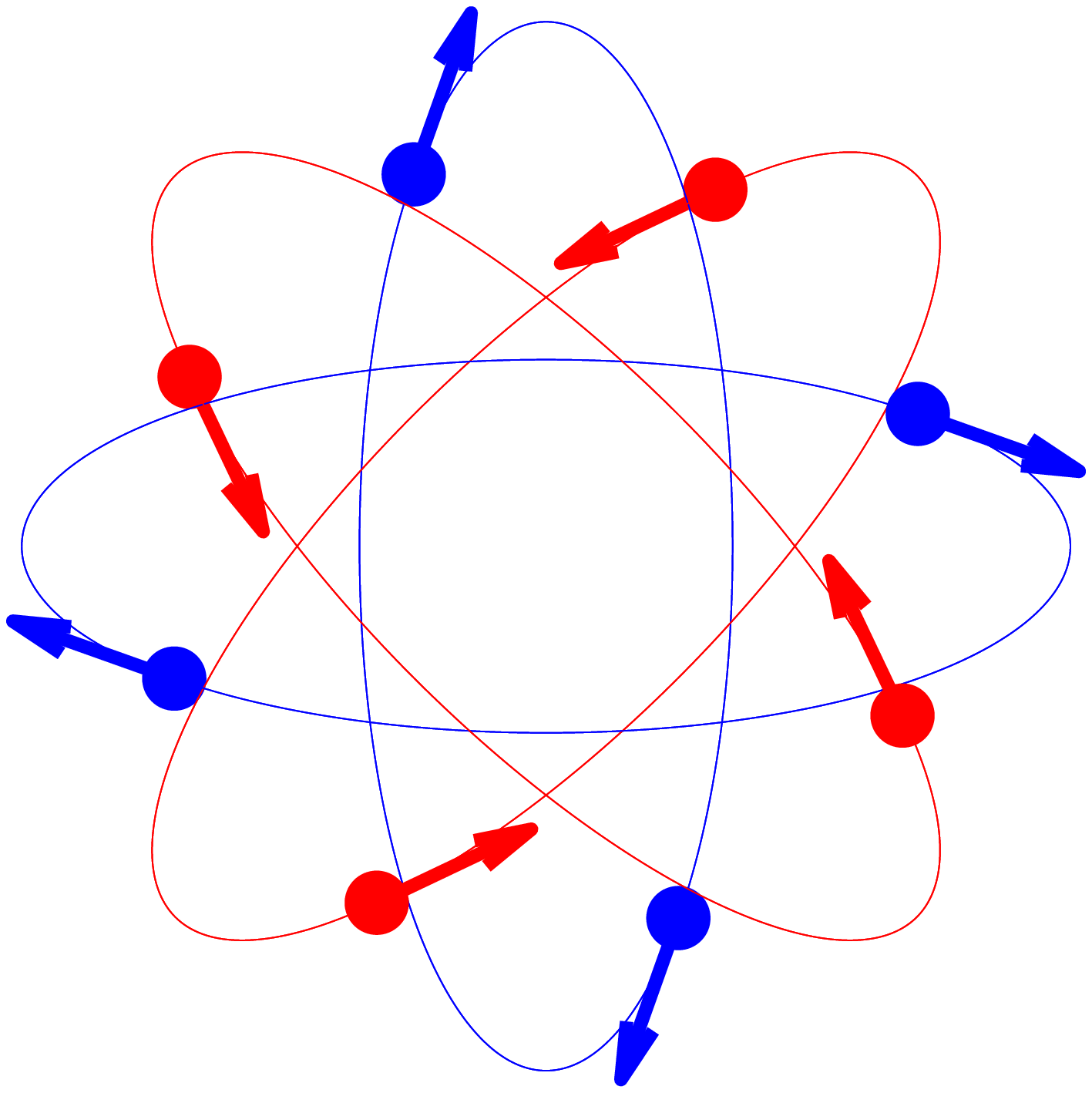}\hspace{0.5cm}
$t=\frac{dT}{n}$

\includegraphics[width=1.8cm]{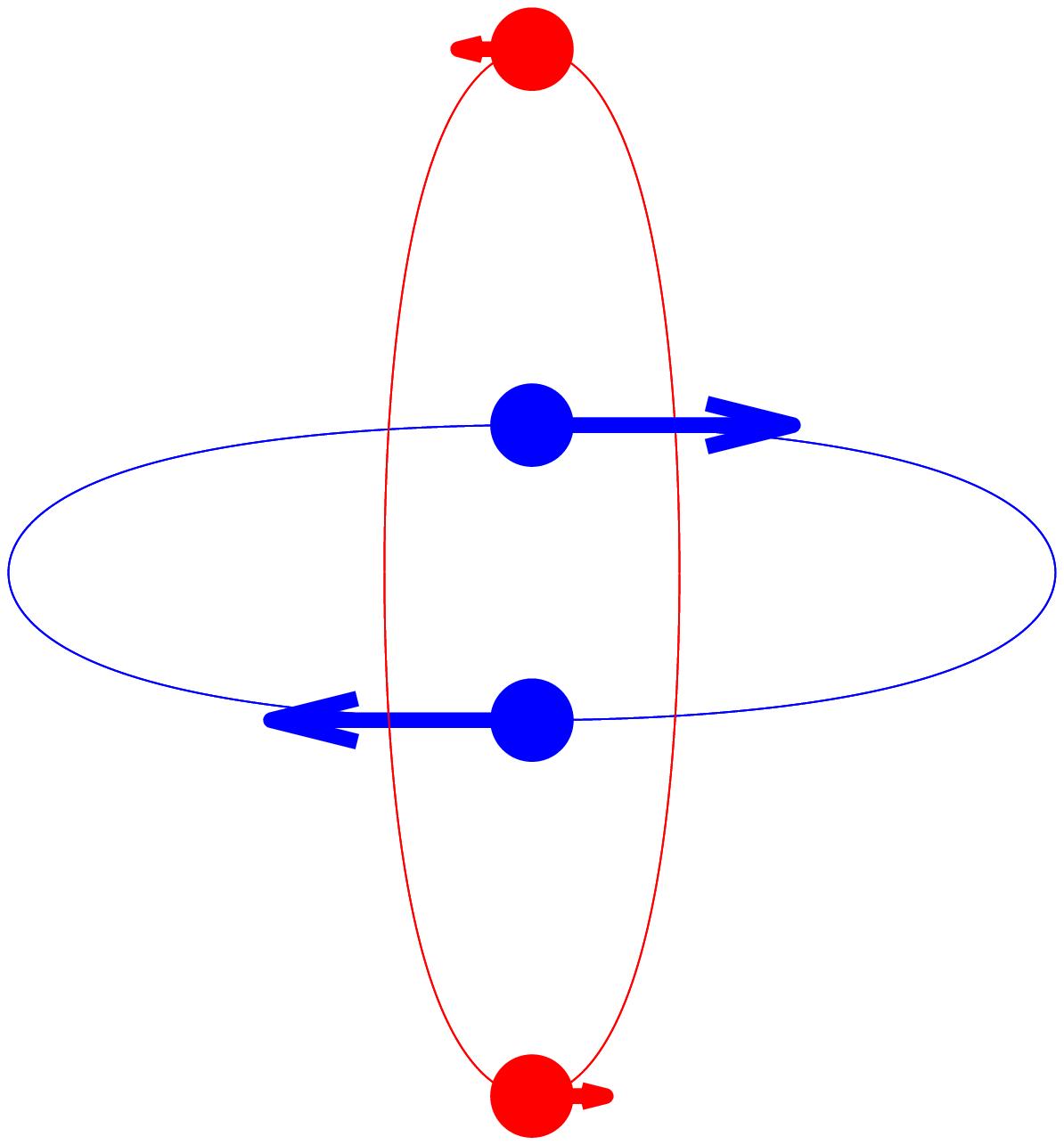}\hspace{0.5cm}
\includegraphics[width=1.8cm]{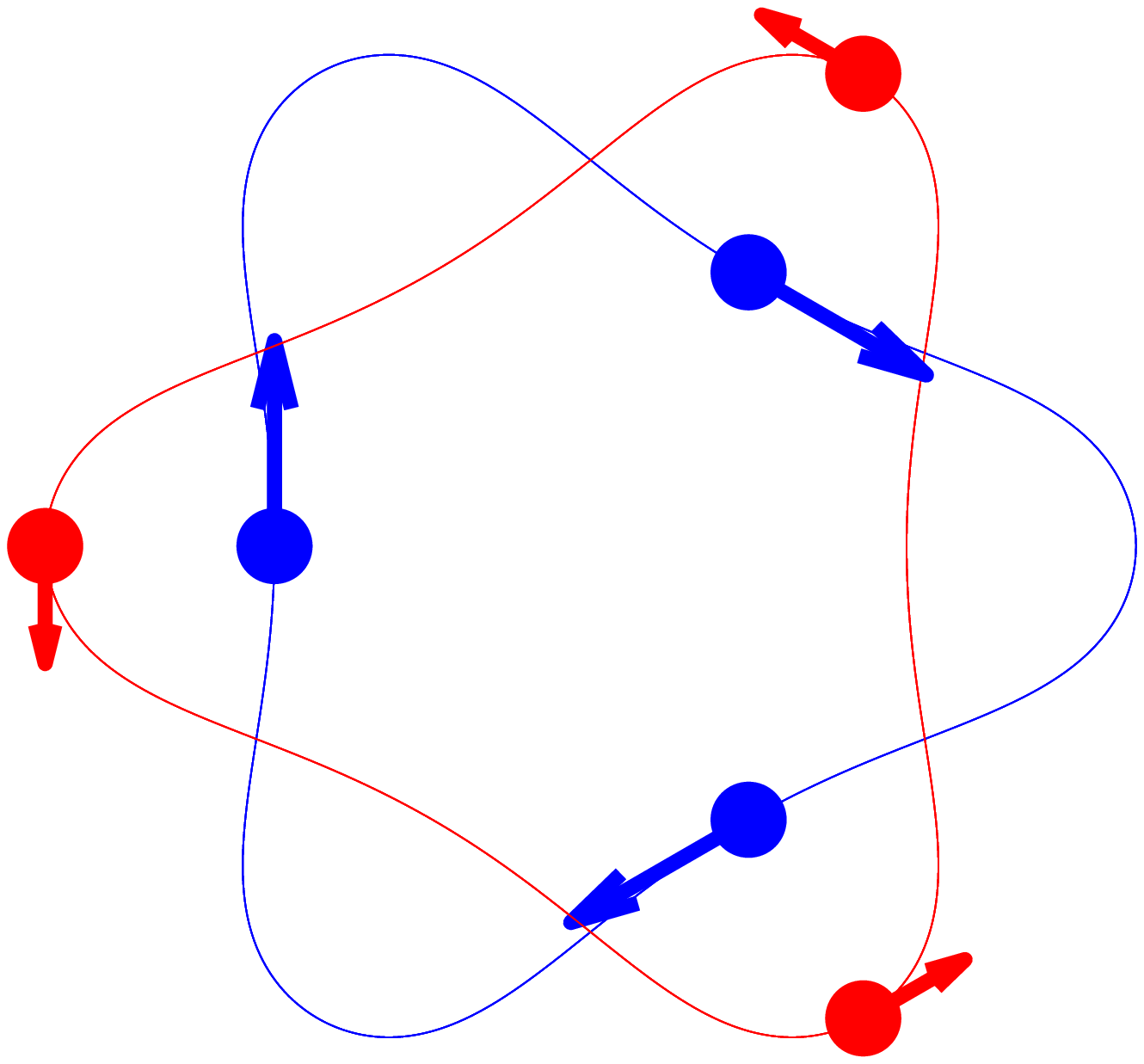}\hspace{0.5cm}
\includegraphics[width=1.8cm]{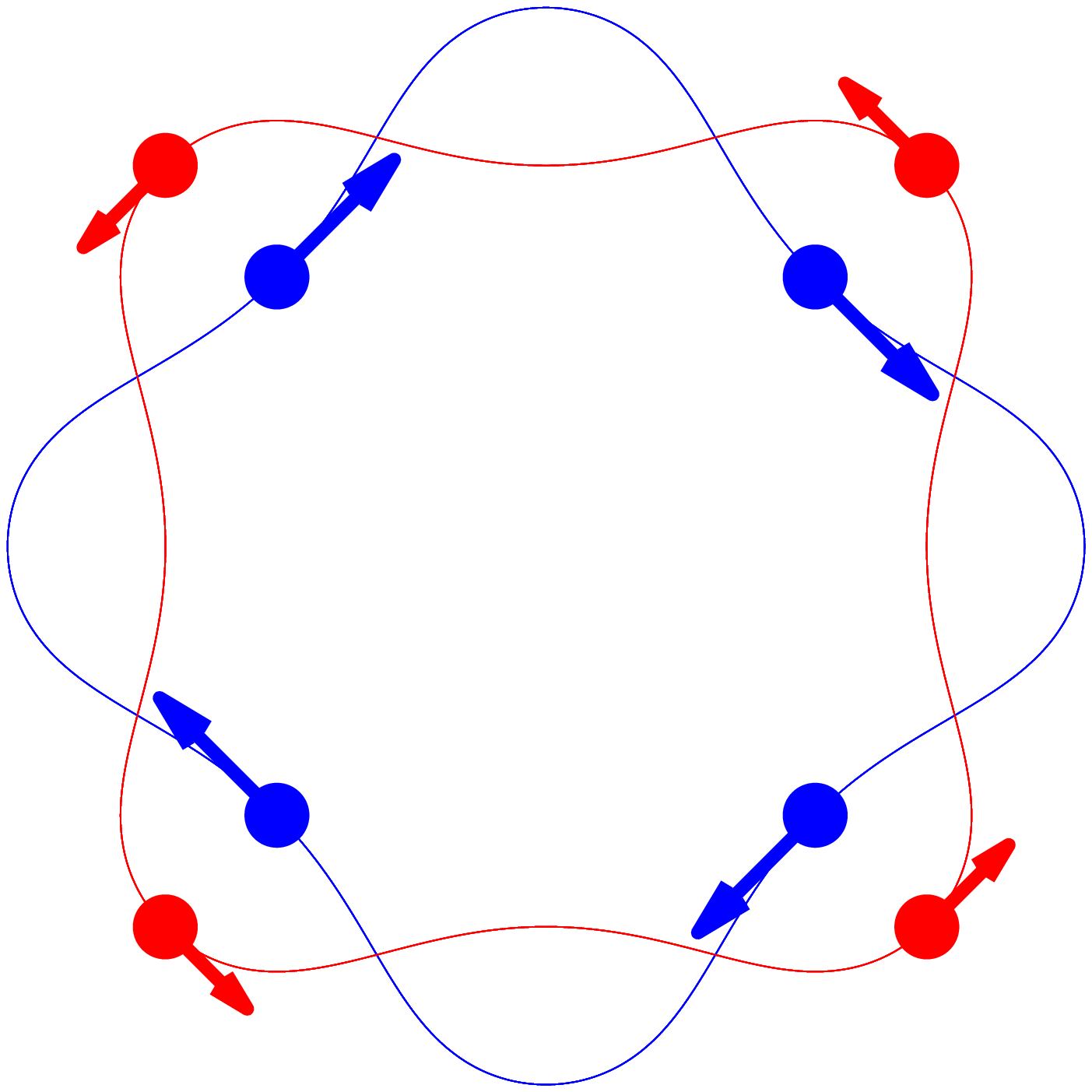}\hspace{0.5cm}
\includegraphics[width=1.8cm]{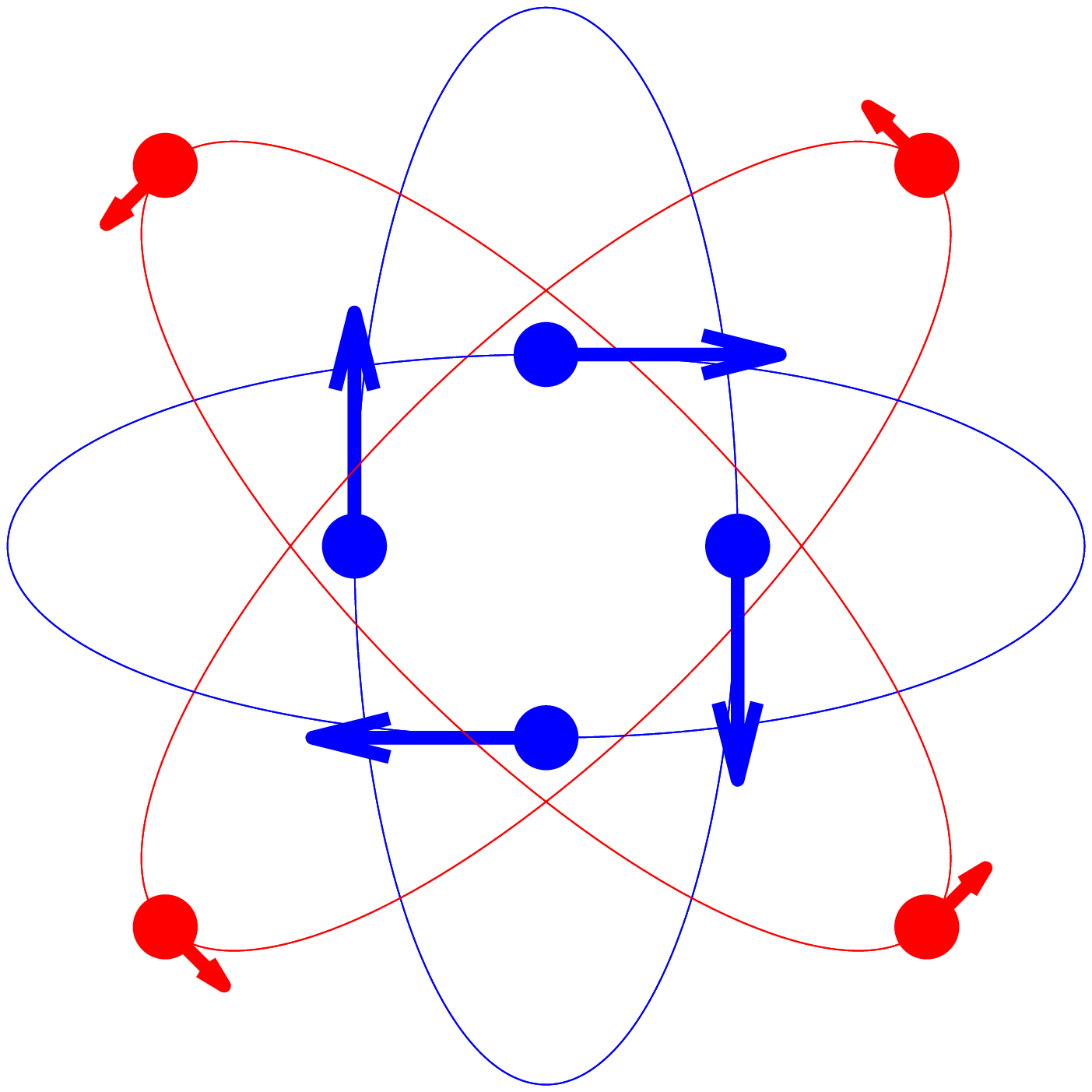}\hspace{0.5cm}
$t=\frac{3dT}{4n}$

\includegraphics[width=1.8cm]{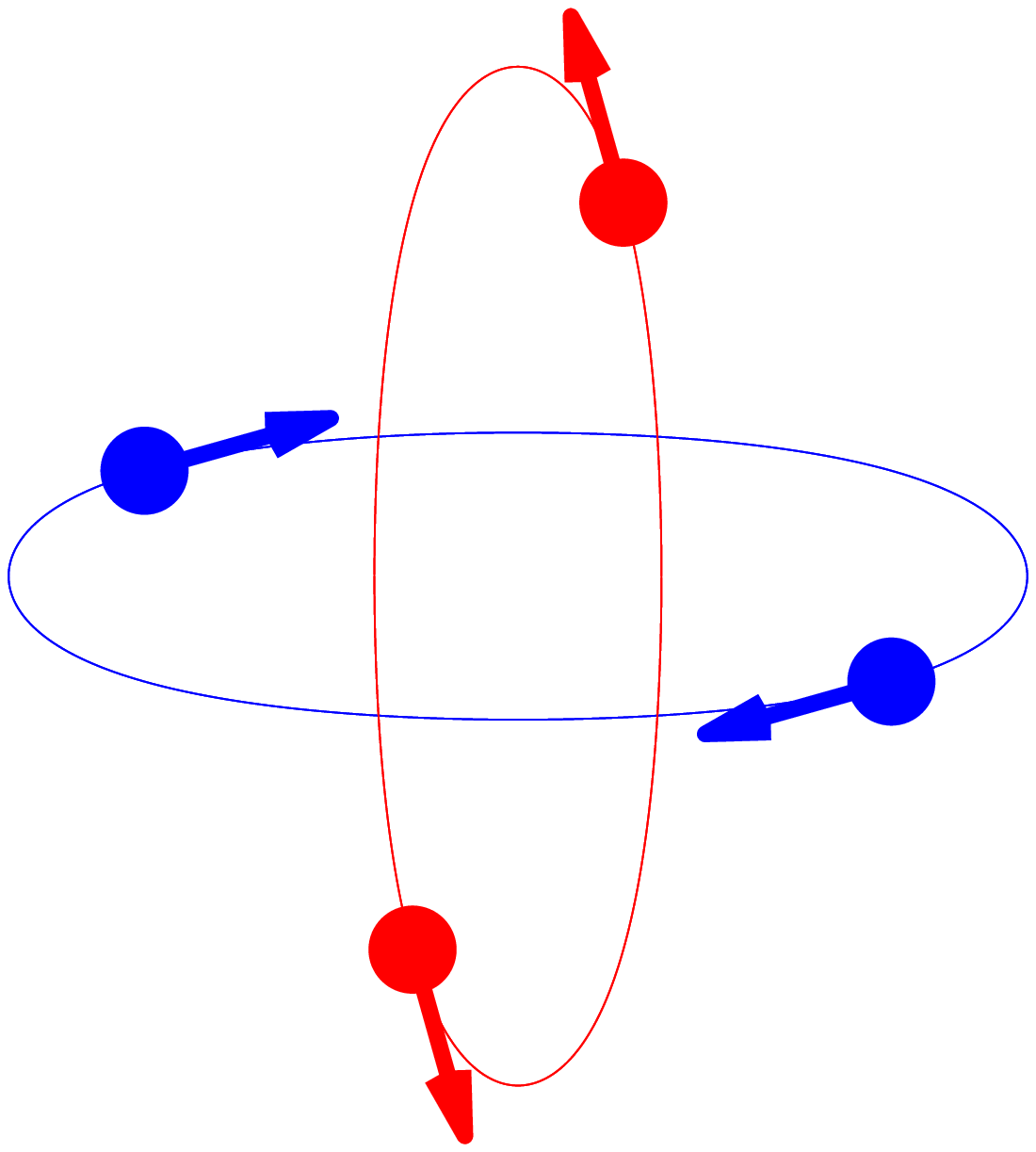}\hspace{0.5cm}
\includegraphics[width=1.8cm]{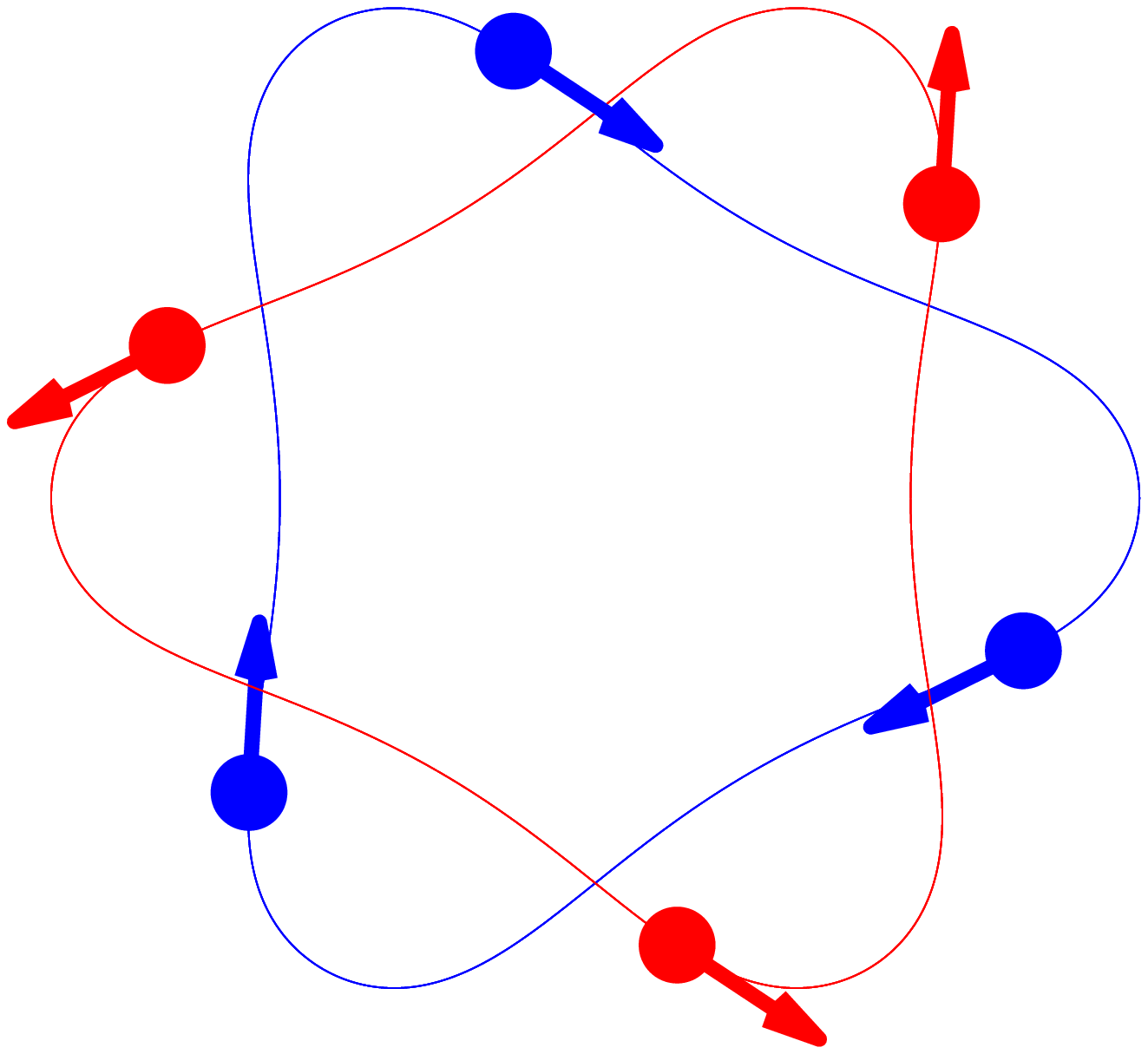}\hspace{0.5cm}
\includegraphics[width=1.8cm]{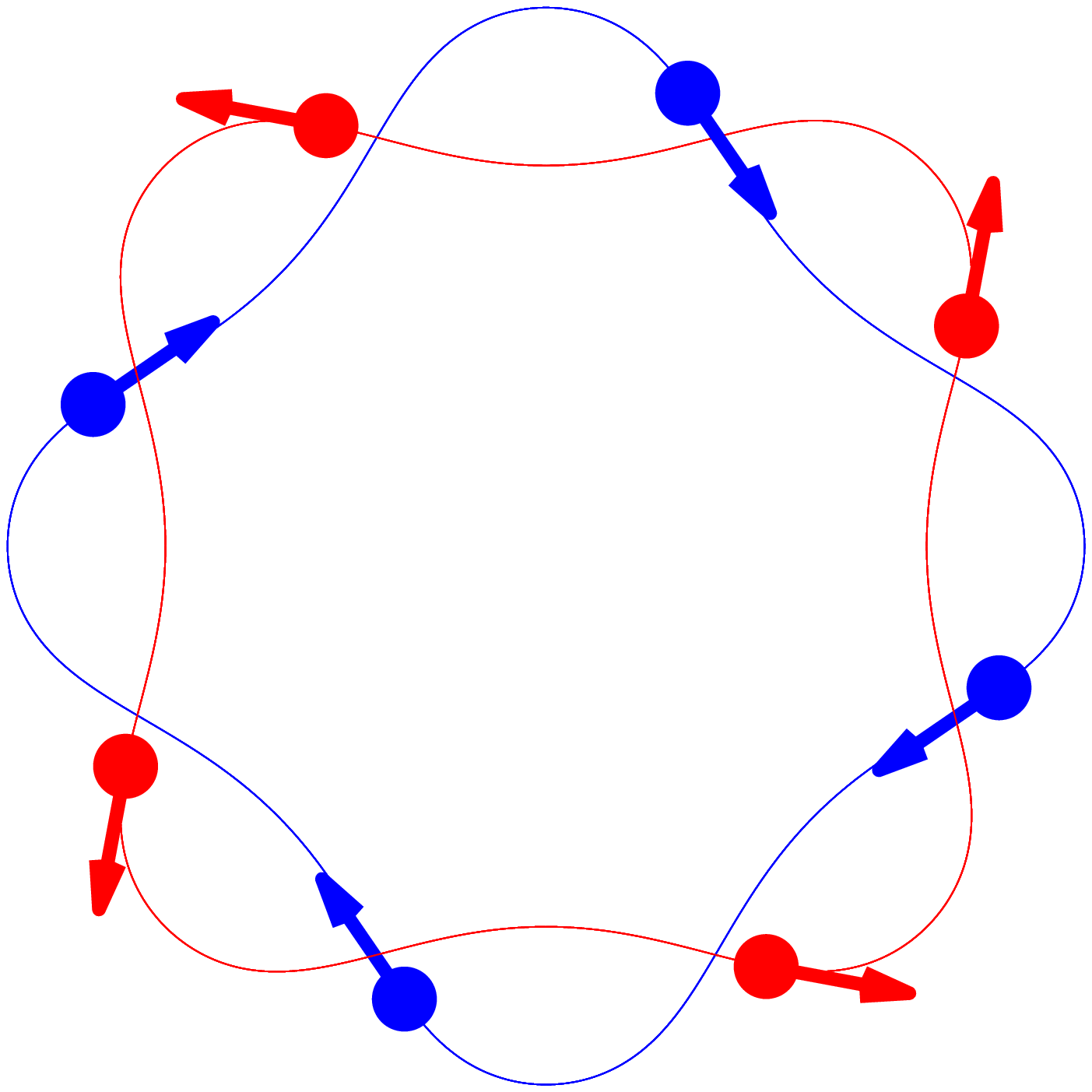}\hspace{0.5cm}
\includegraphics[width=1.8cm]{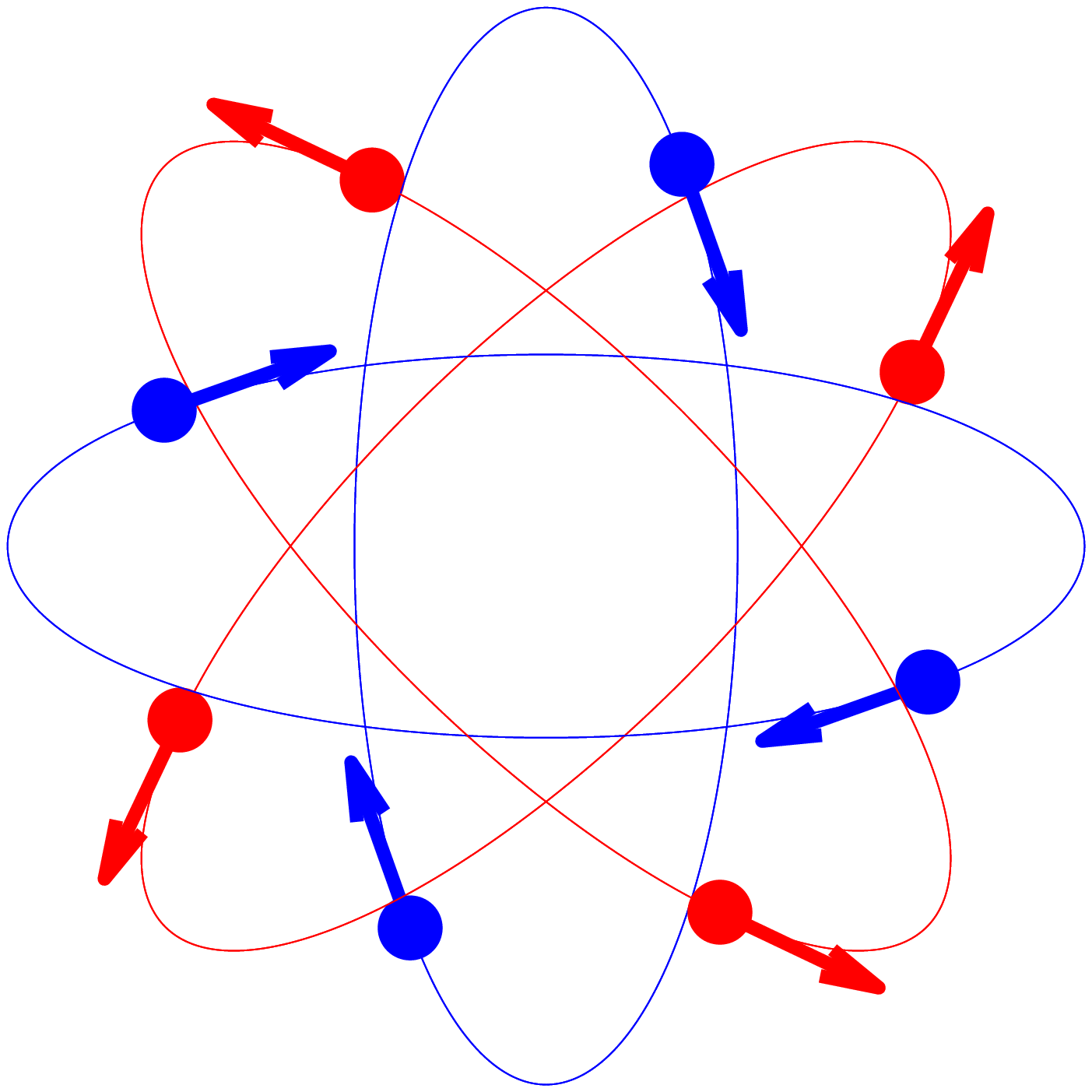}\hspace{0.5cm}
$t=\frac{dT}{2n}$

\includegraphics[width=1.8cm]{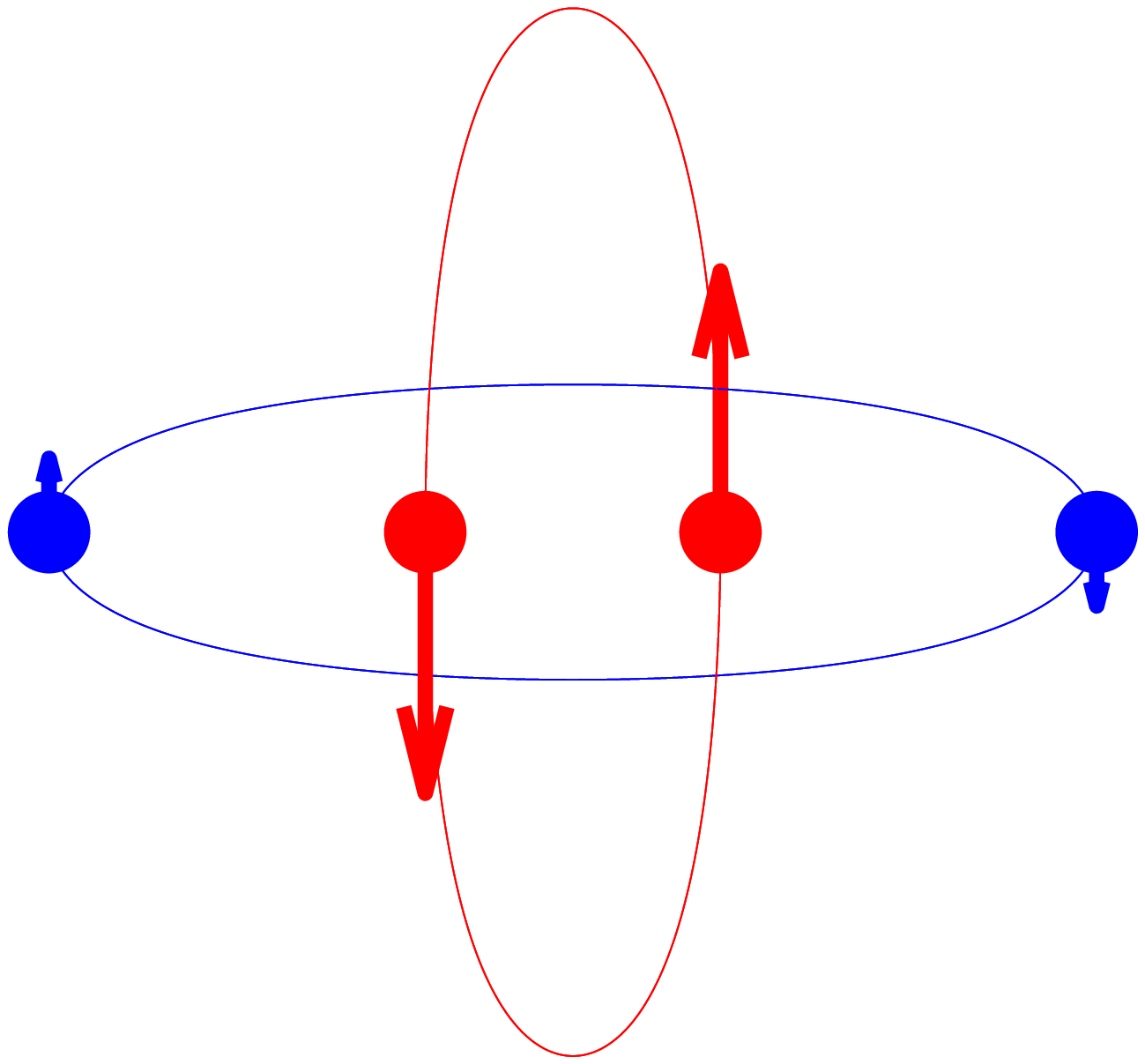}\hspace{0.5cm}
\includegraphics[width=1.8cm]{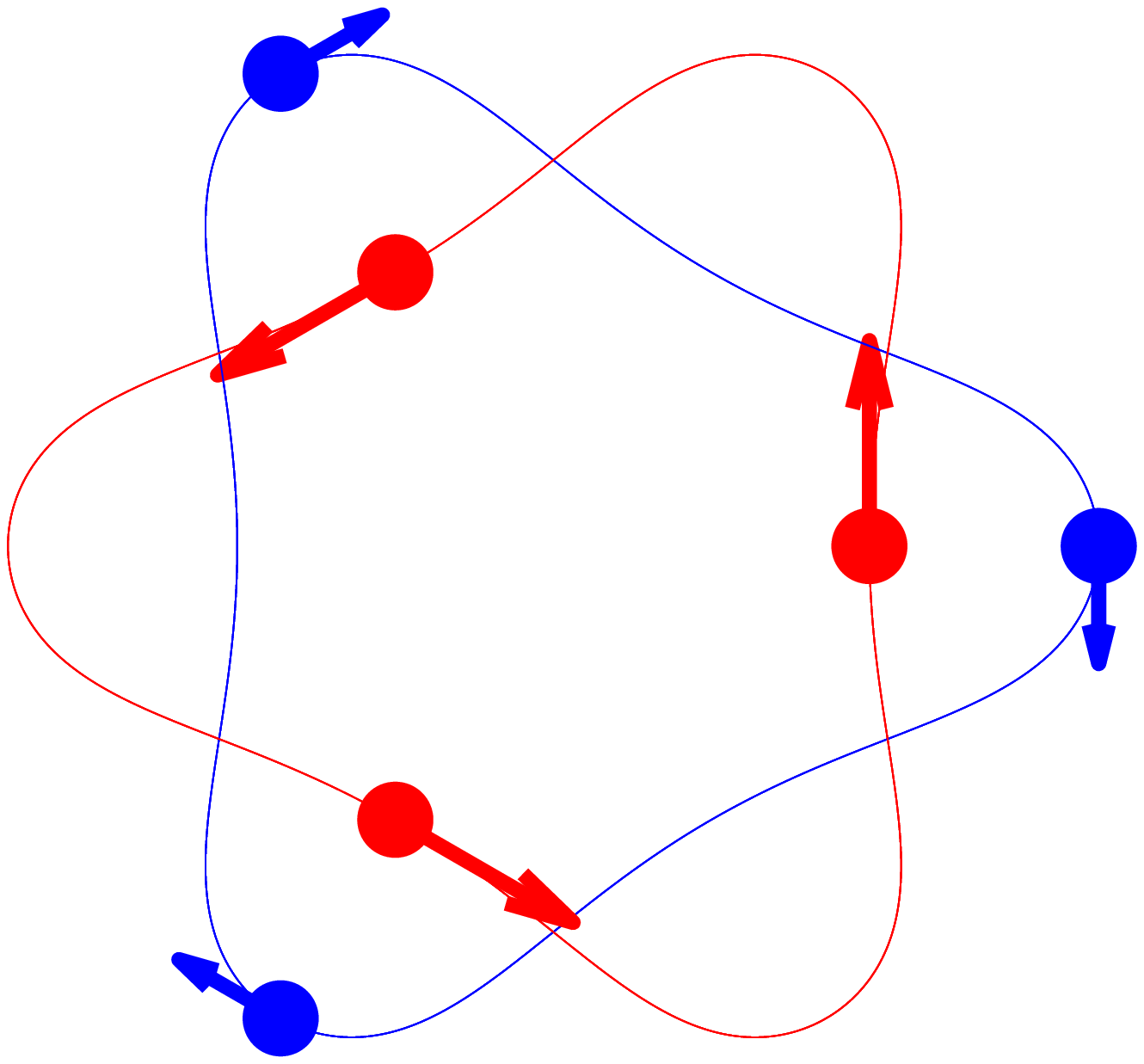}\hspace{0.5cm}
\includegraphics[width=1.8cm]{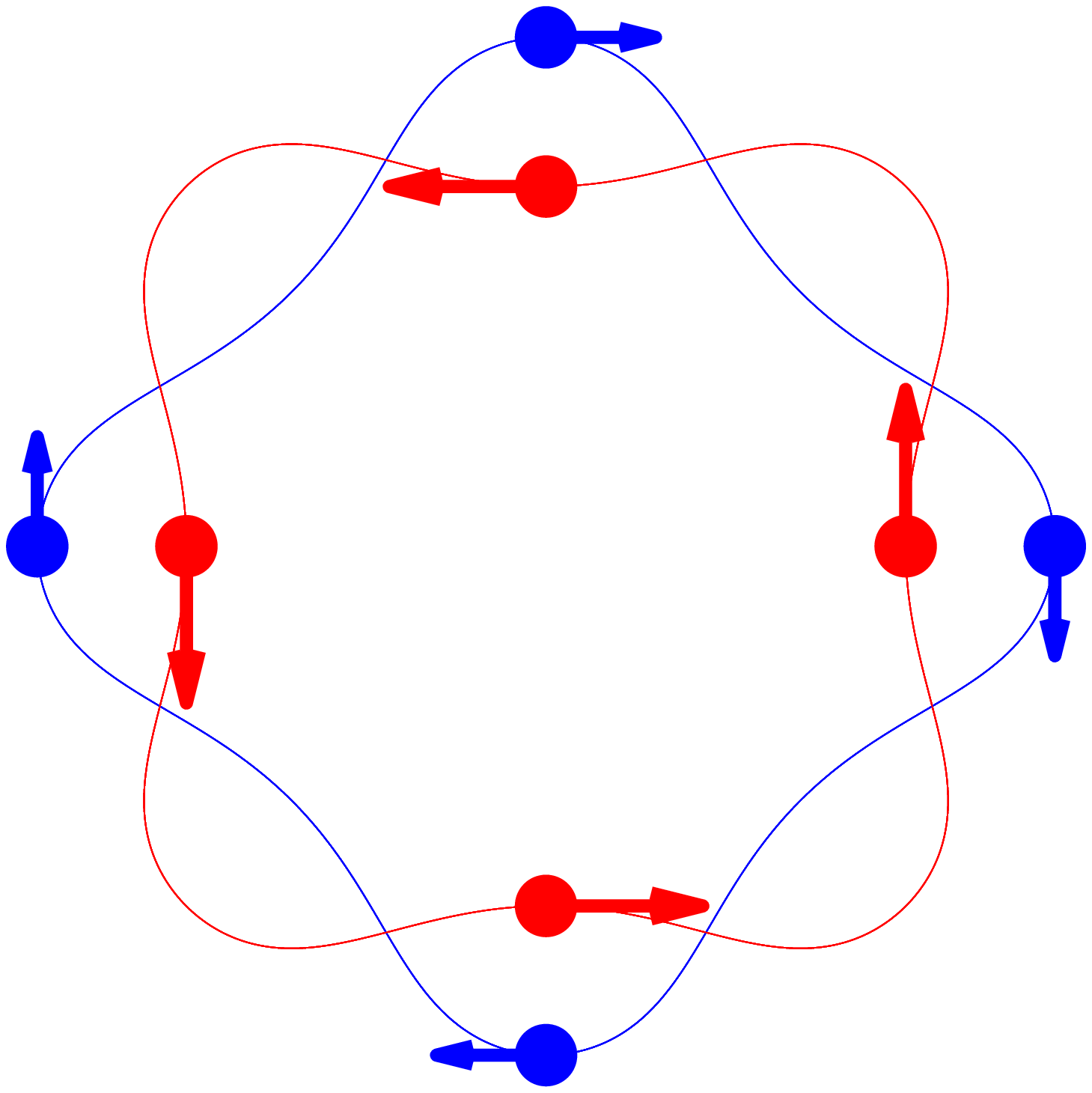}\hspace{0.5cm}
\includegraphics[width=1.8cm]{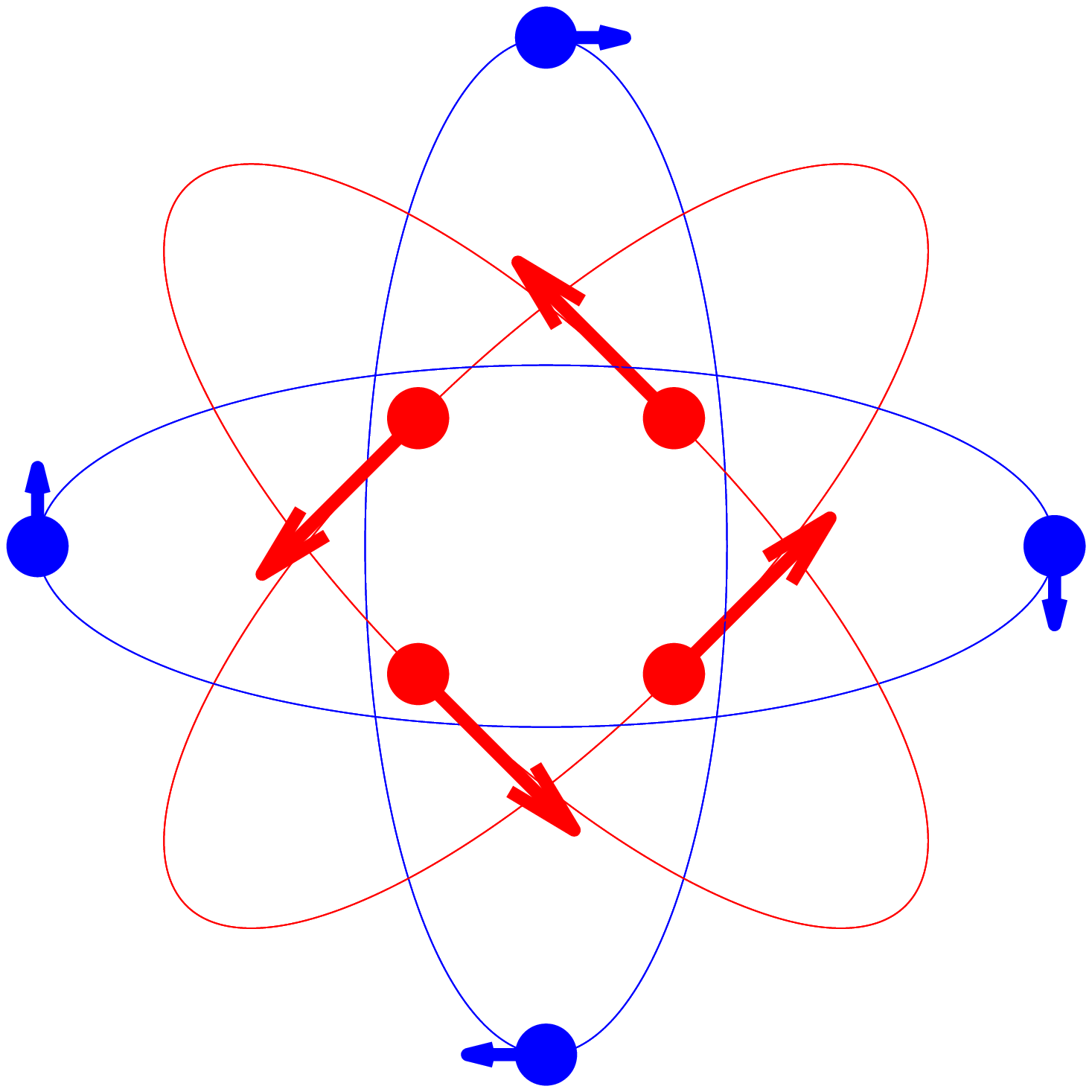}\hspace{0.5cm}
$t=\frac{dT}{4n}$
\vspace{0.2cm}

\includegraphics[width=1.8cm]{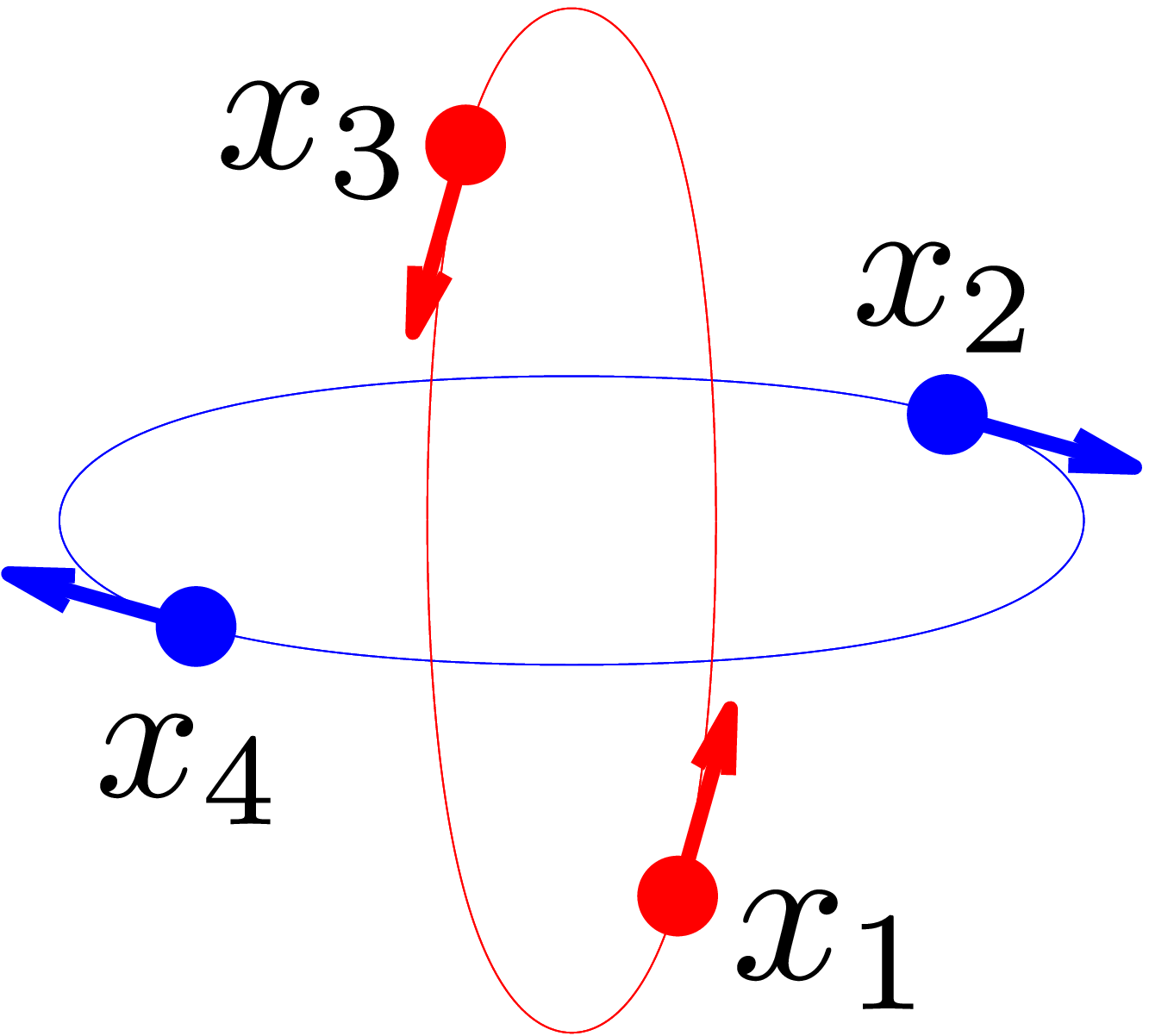}\hspace{0.5cm}
\includegraphics[width=1.8cm]{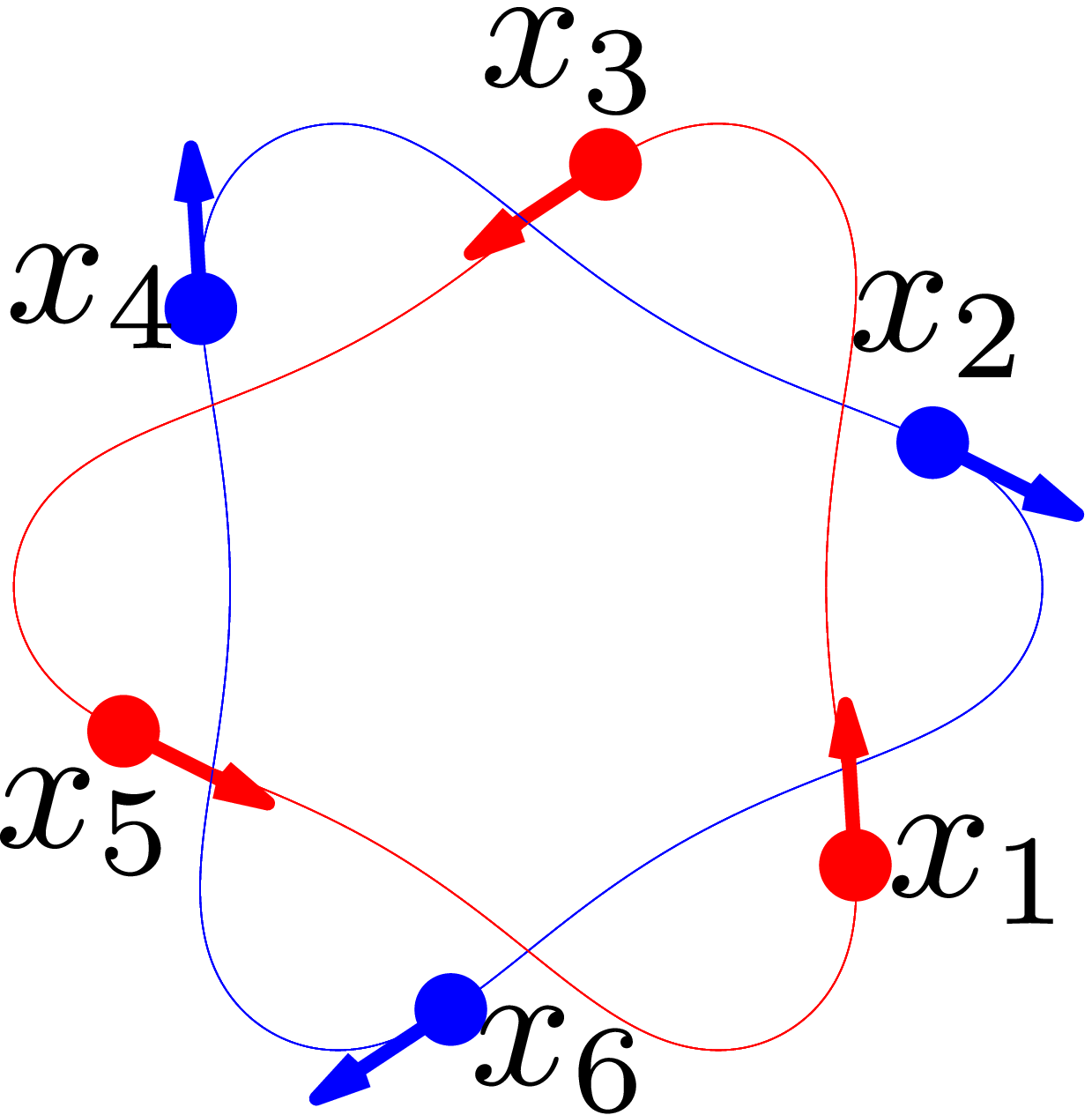}\hspace{0.5cm}
\includegraphics[width=1.8cm]{BRn=4_p=1_1.eps}\hspace{0.5cm}
\includegraphics[width=1.8cm]{BRn=4_p=2_1.eps}\hspace{0.5cm}
$t=0$


(1) \hspace{1.7cm}  (2) \hspace{1.7cm}  (3) \hspace{1.7cm}  (4) \hspace{2.0cm}

\end{center}
\end{figure}

\begin{figure}
\begin{center}
\includegraphics[width=1.8cm]{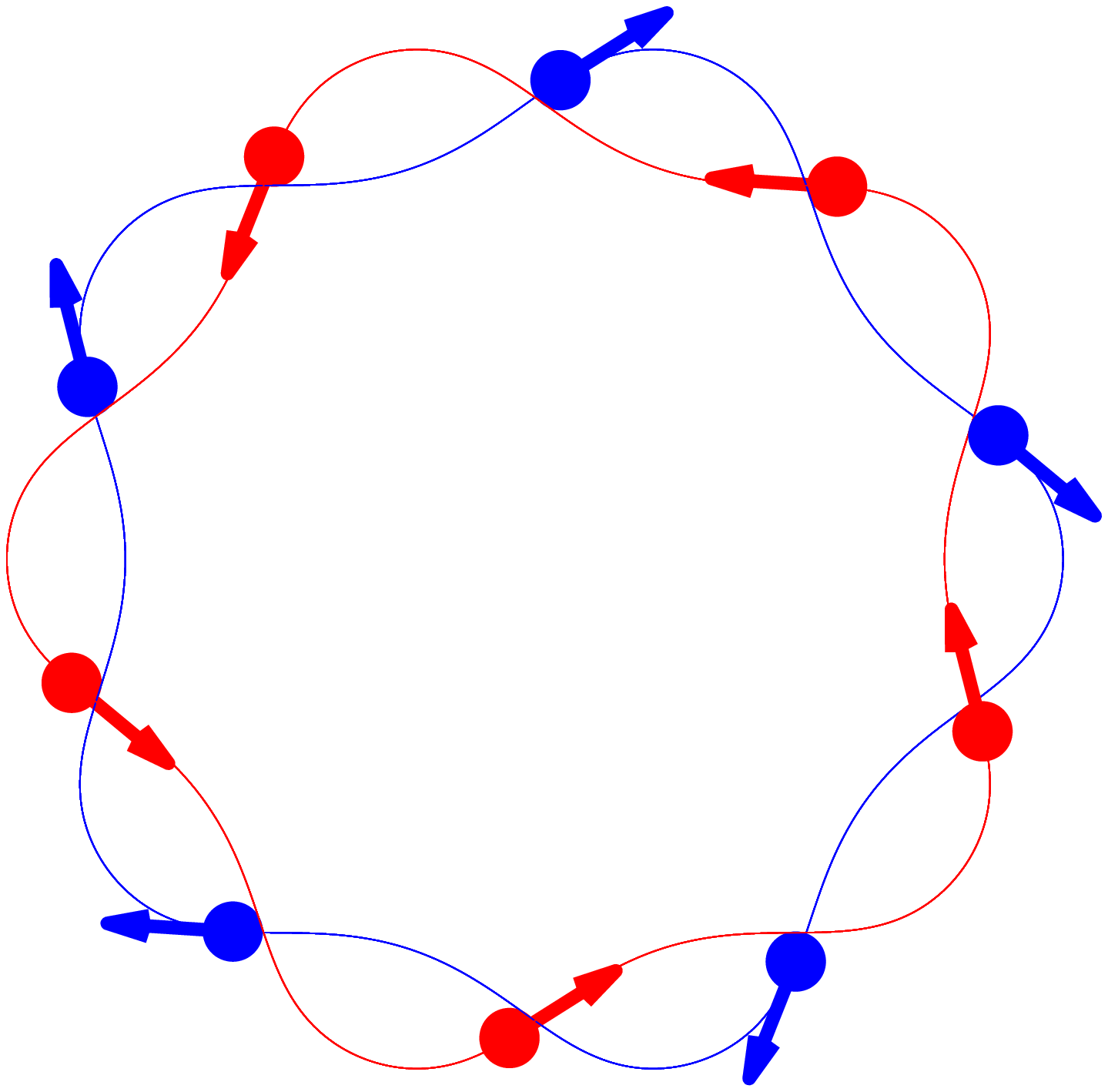}\hspace{0.5cm}
\includegraphics[width=1.8cm]{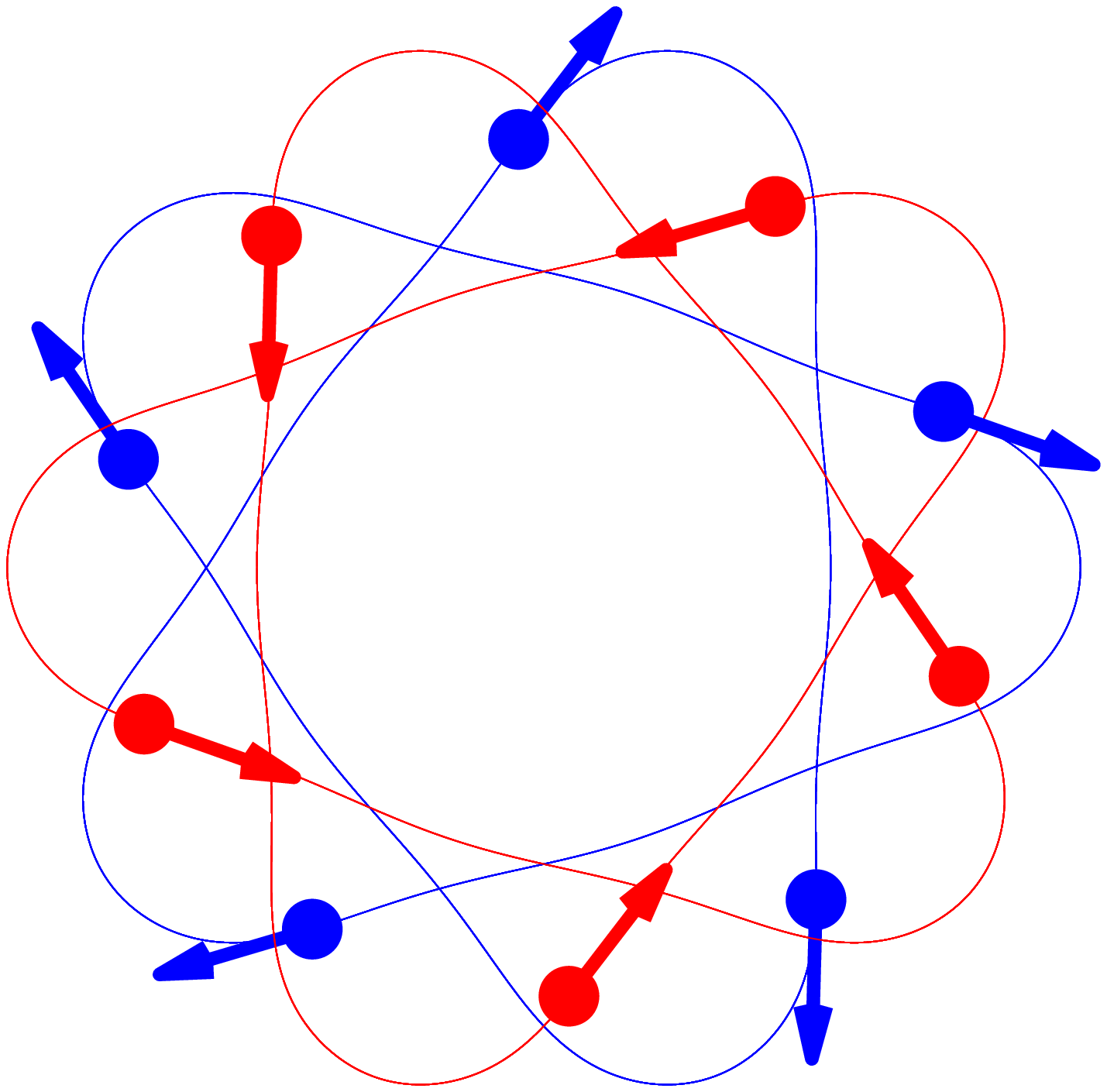}\hspace{0.5cm}
\includegraphics[width=1.8cm]{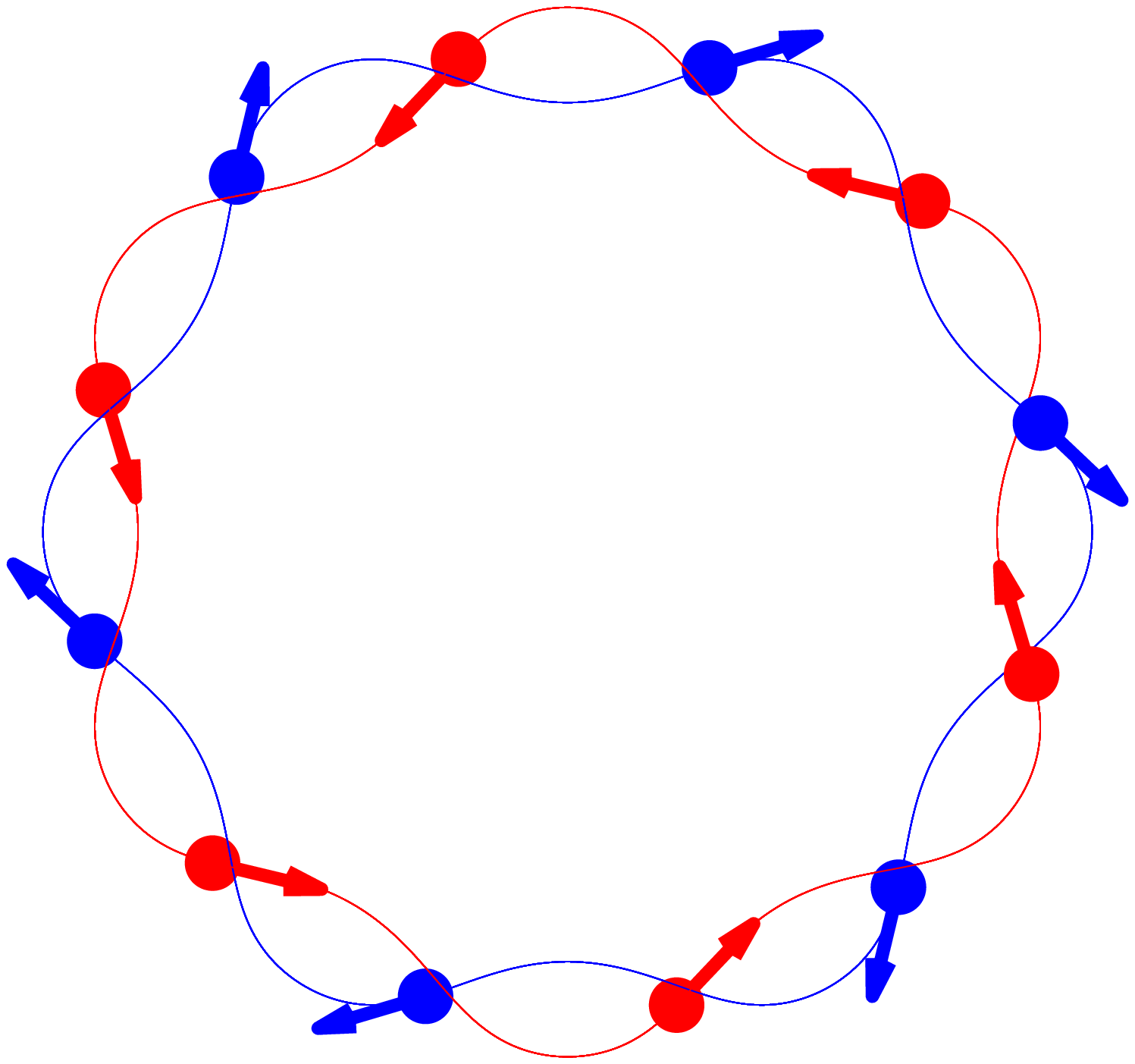}\hspace{0.5cm}
\includegraphics[width=1.8cm]{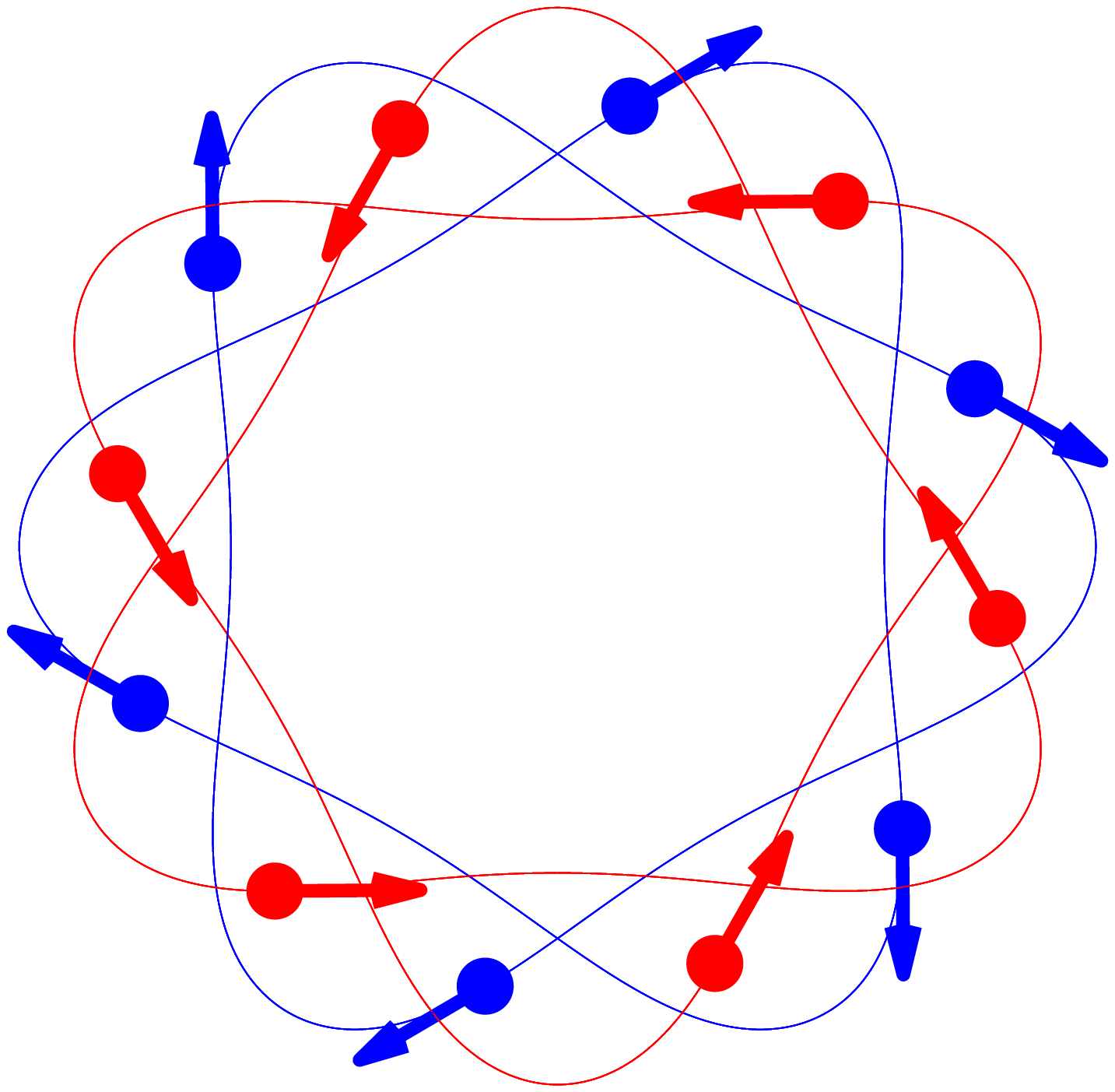}\hspace{0.5cm}
$t=\frac{dT}{n}$

\includegraphics[width=1.8cm]{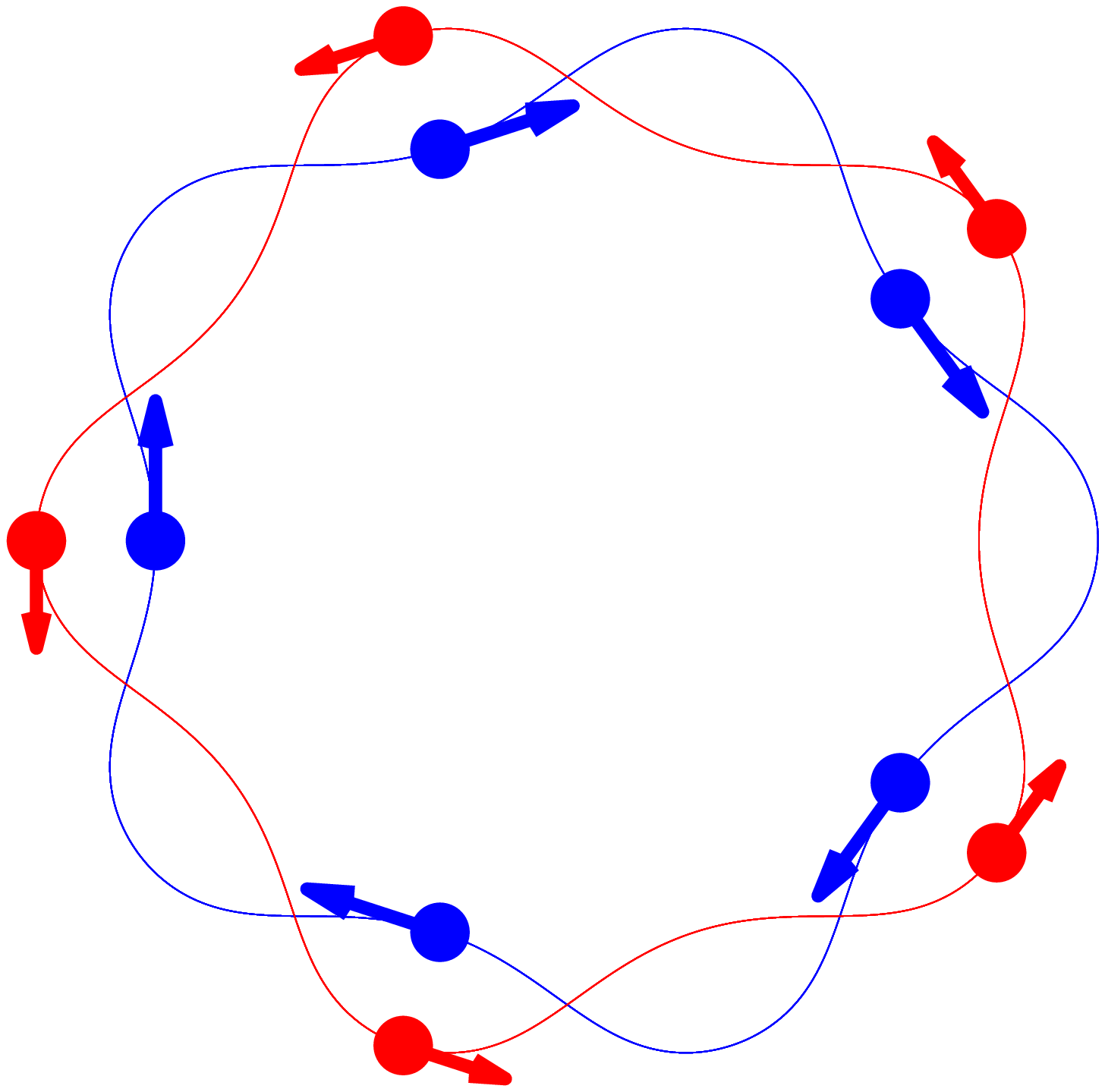}\hspace{0.5cm}
\includegraphics[width=1.8cm]{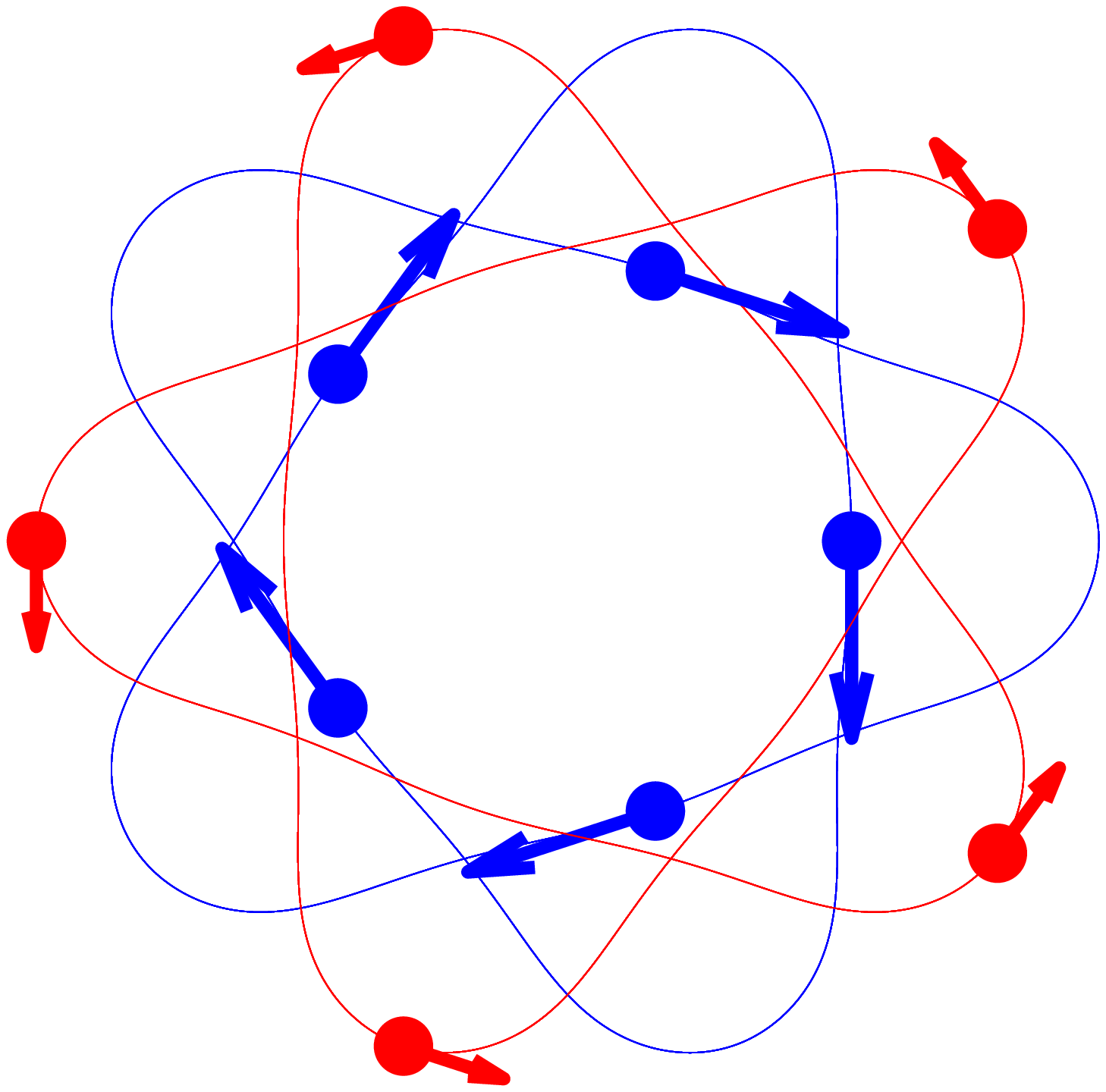}\hspace{0.5cm}
\includegraphics[width=1.8cm]{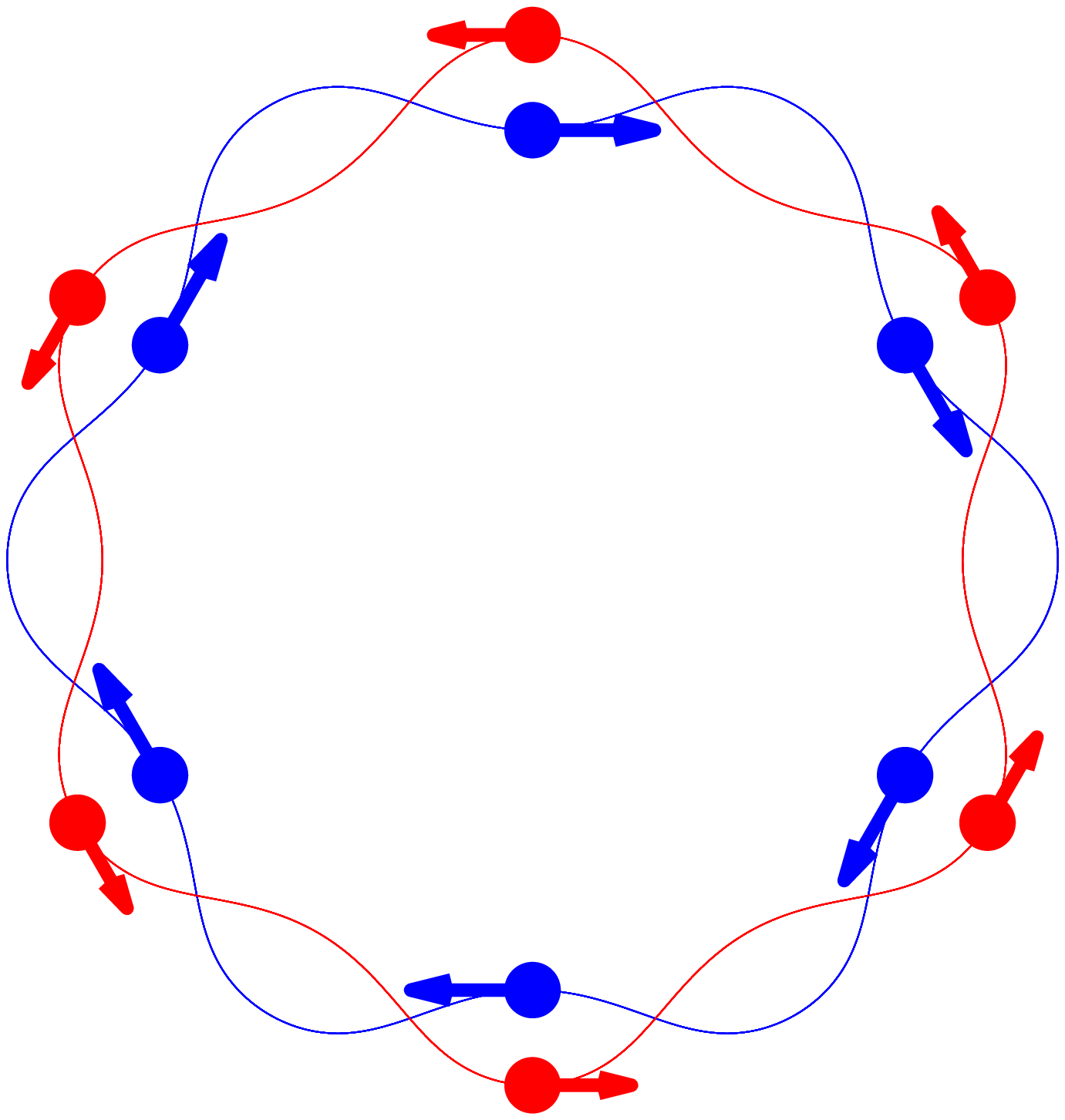}\hspace{0.5cm}
\includegraphics[width=1.8cm]{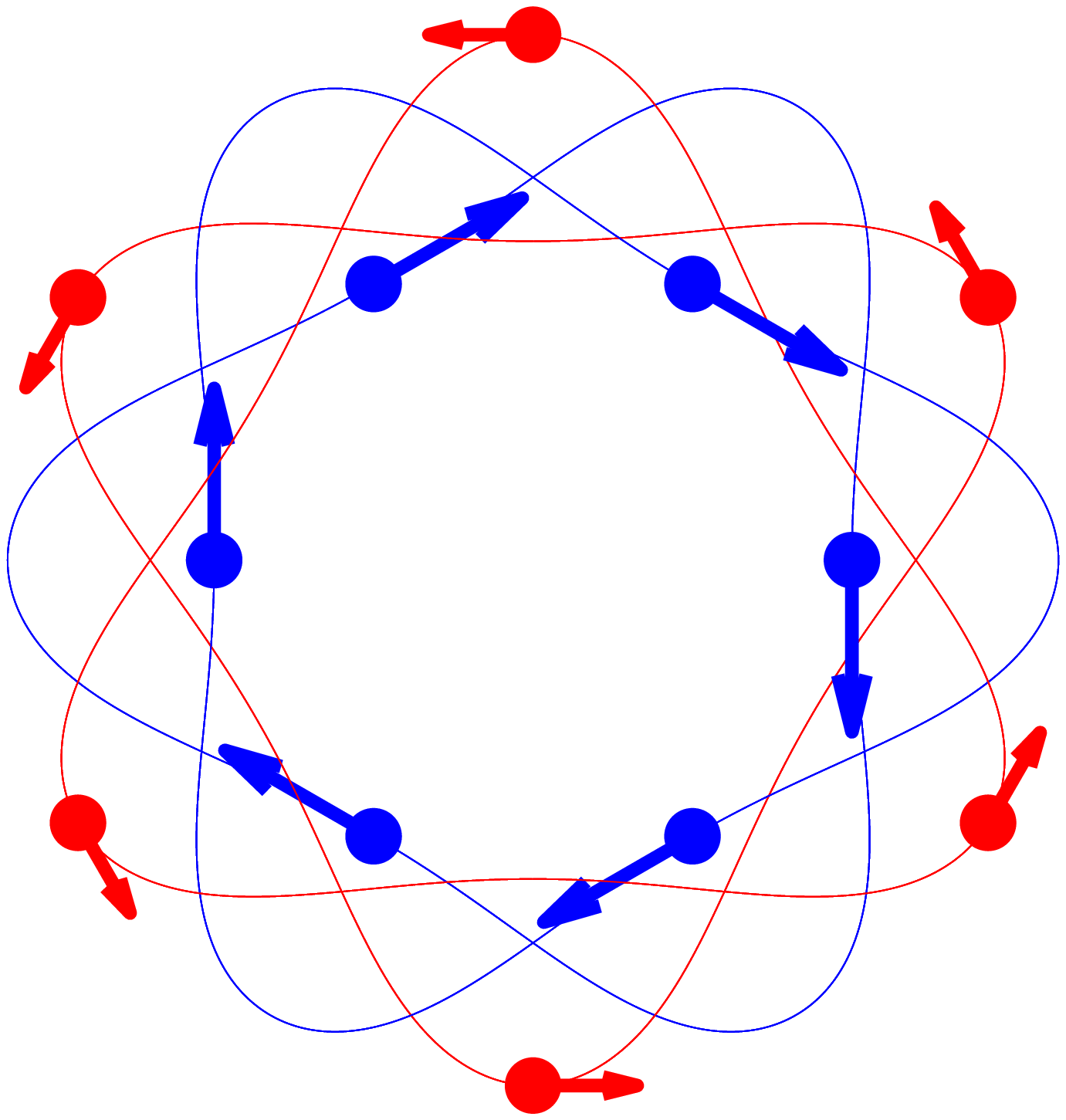}\hspace{0.5cm}
$t=\frac{3dT}{4n}$

\includegraphics[width=1.8cm]{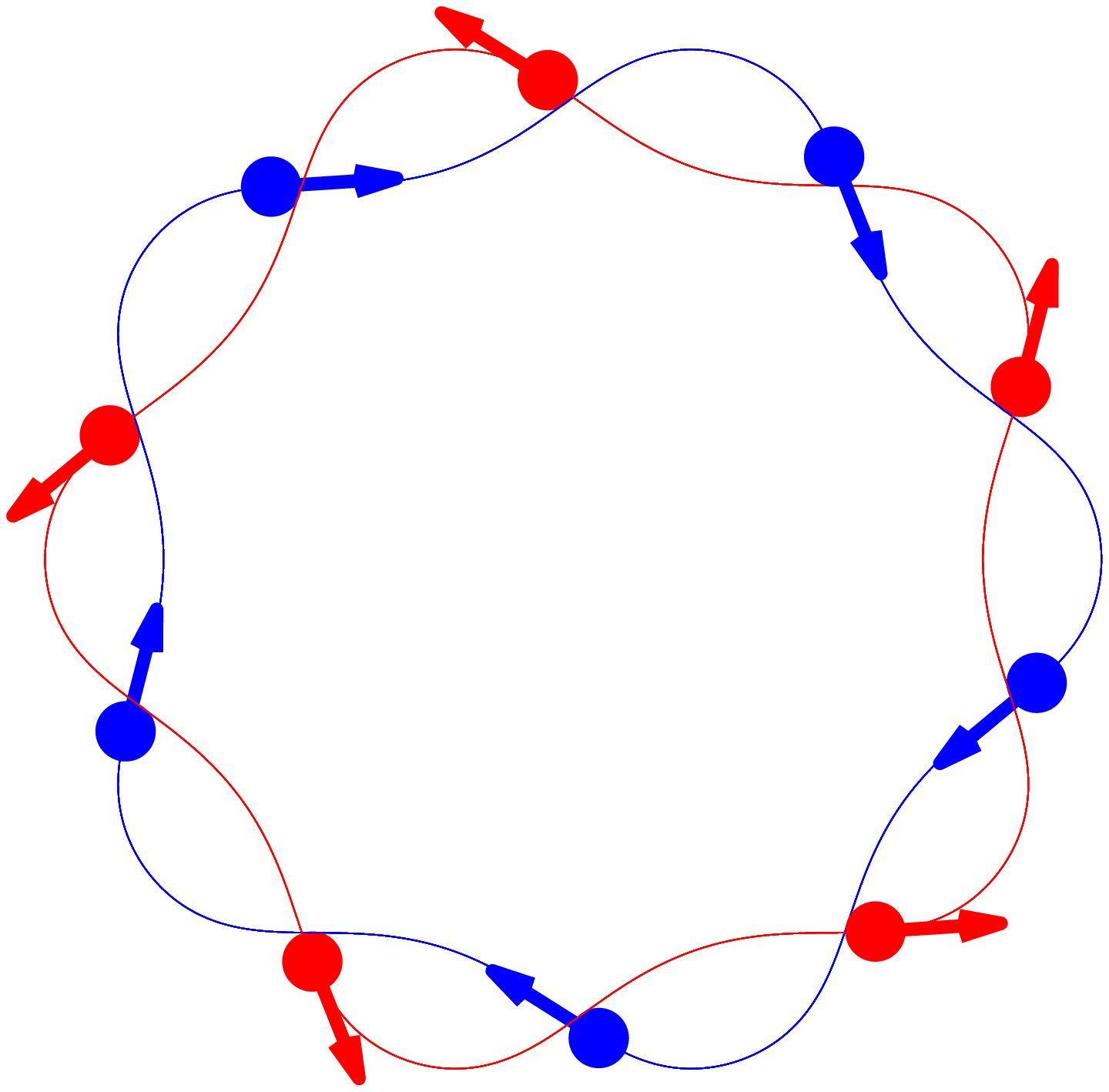}\hspace{0.5cm}
\includegraphics[width=1.8cm]{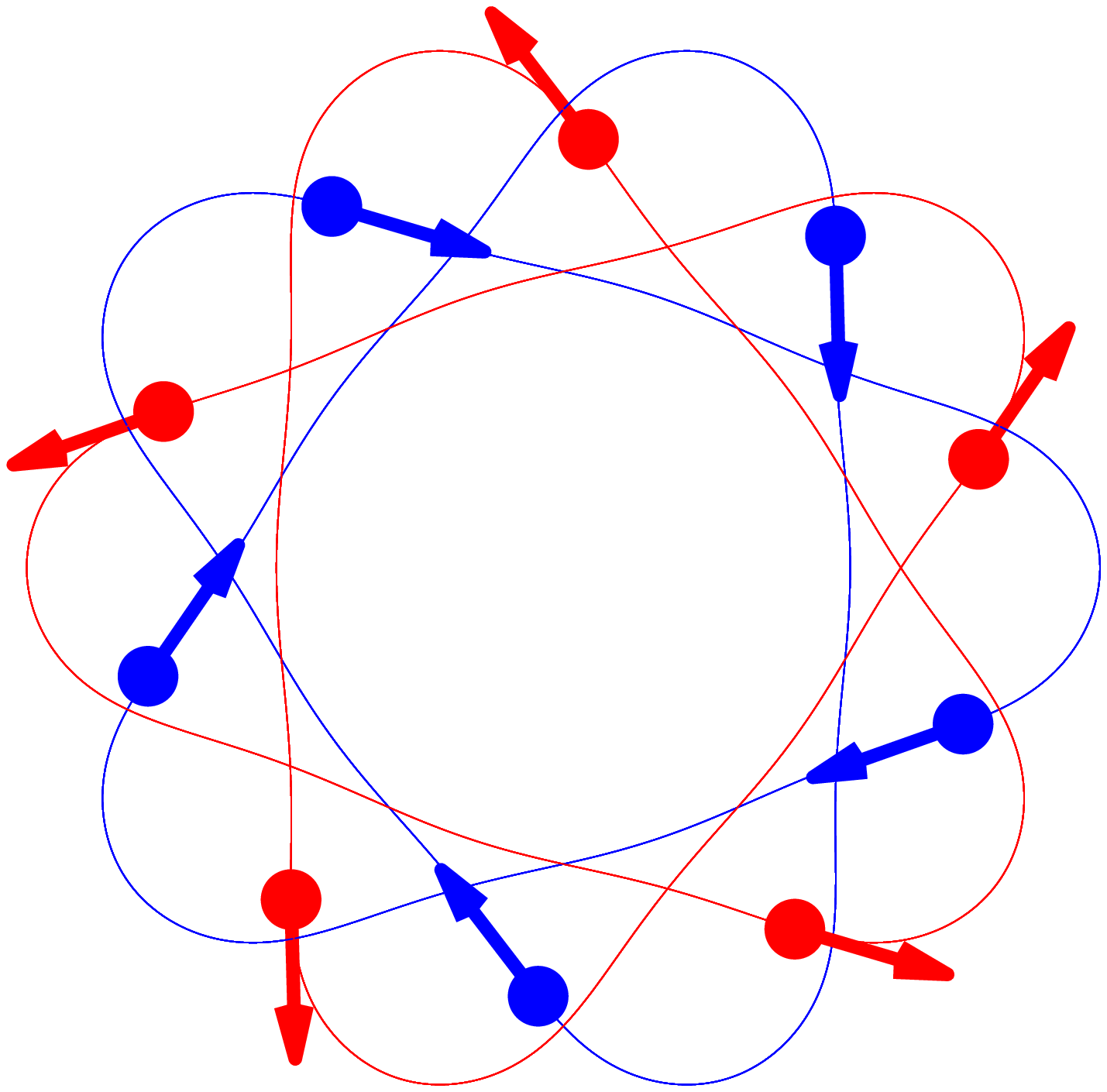}\hspace{0.5cm}
\includegraphics[width=1.8cm]{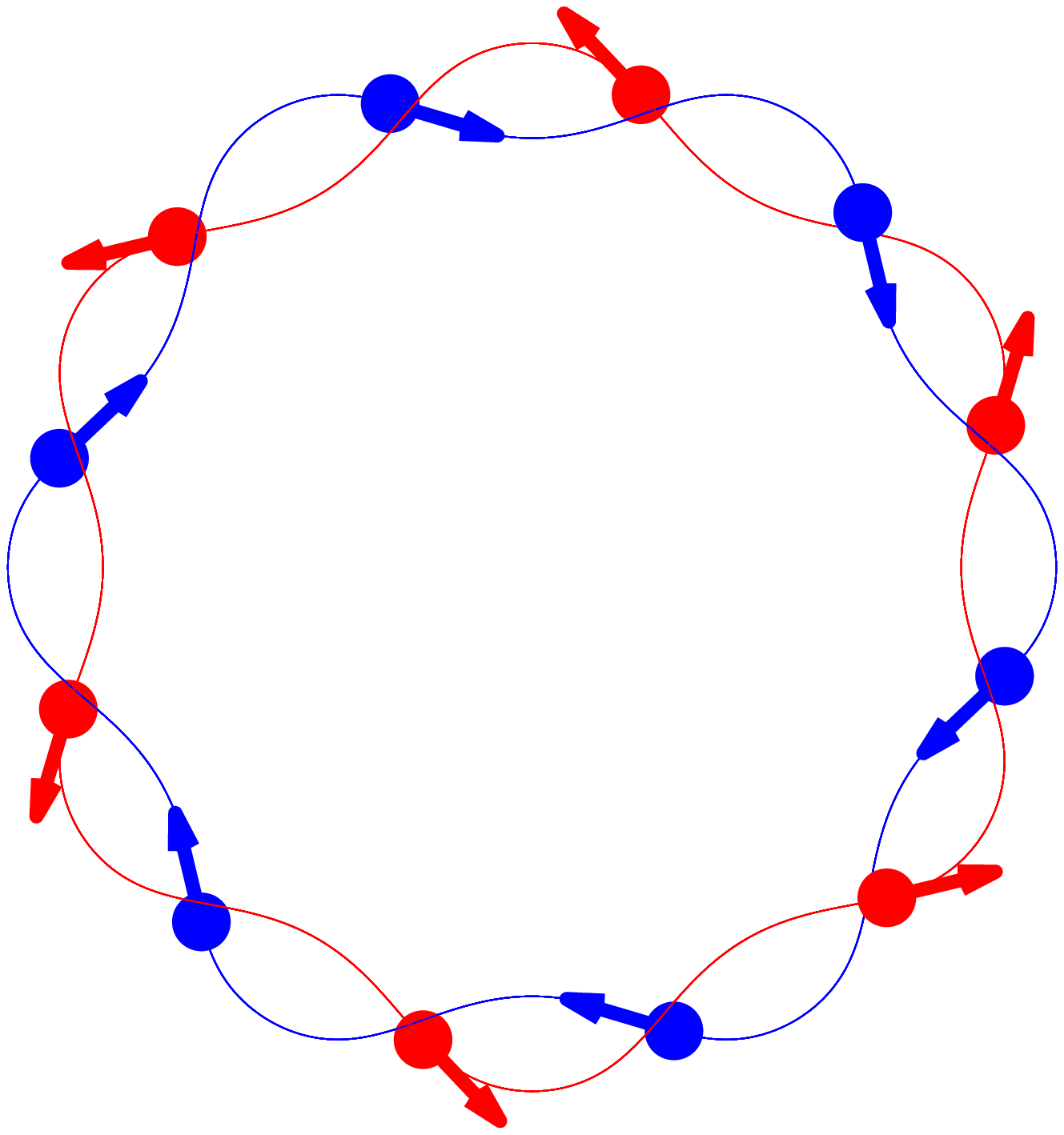}\hspace{0.5cm}
\includegraphics[width=1.8cm]{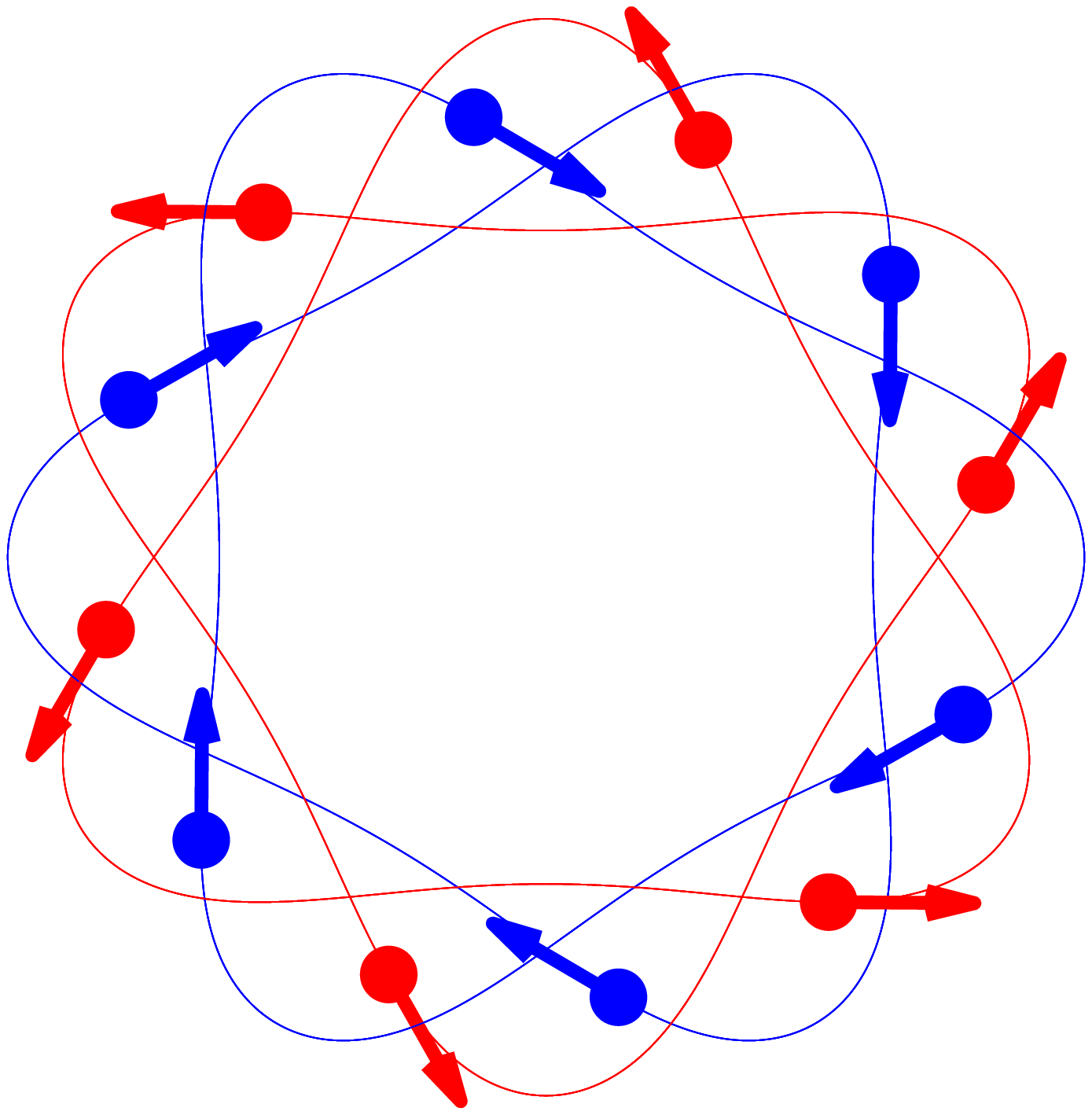}\hspace{0.5cm}
$t=\frac{dT}{2n}$

\includegraphics[width=1.8cm]{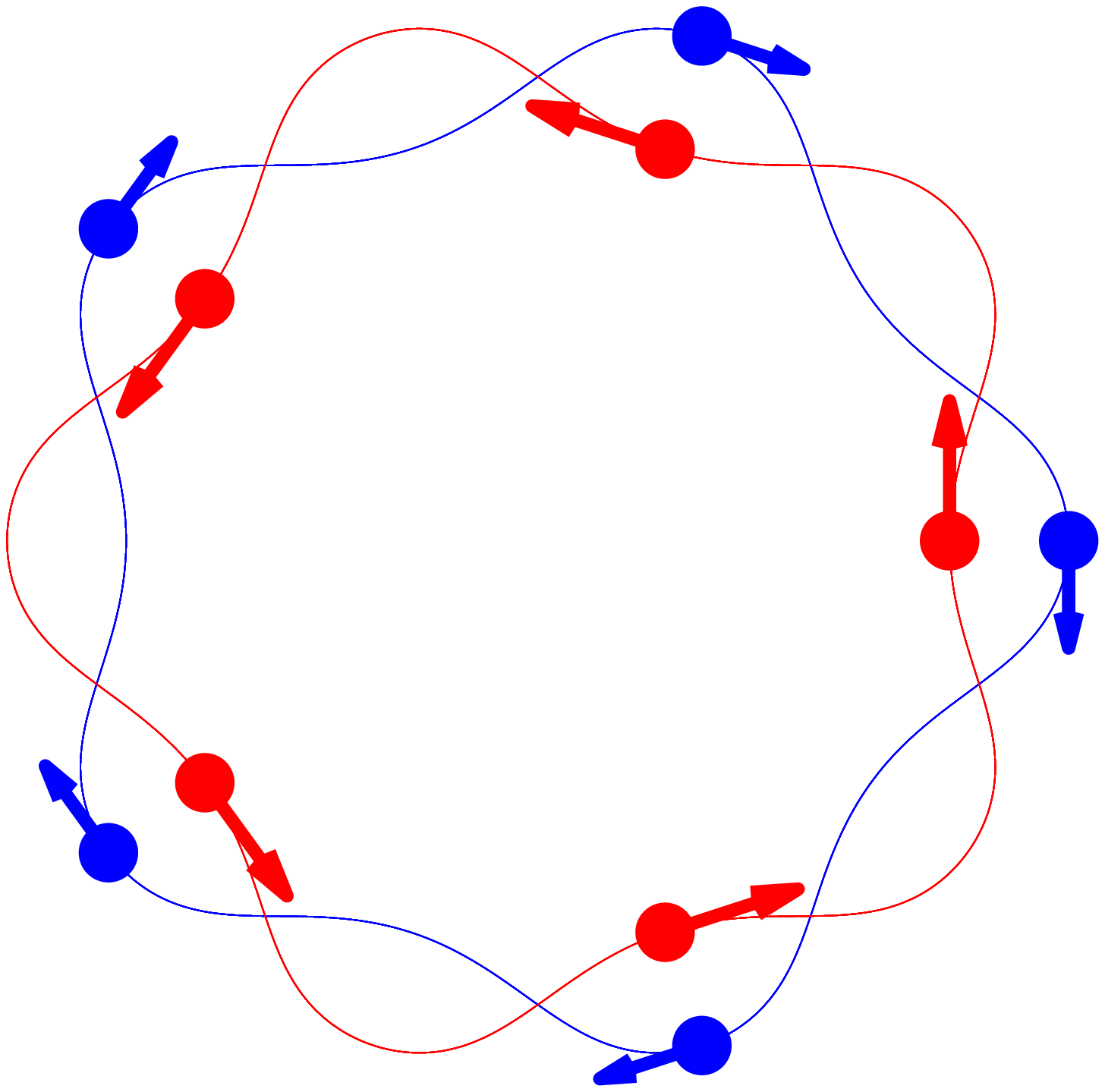}\hspace{0.5cm}
\includegraphics[width=1.8cm]{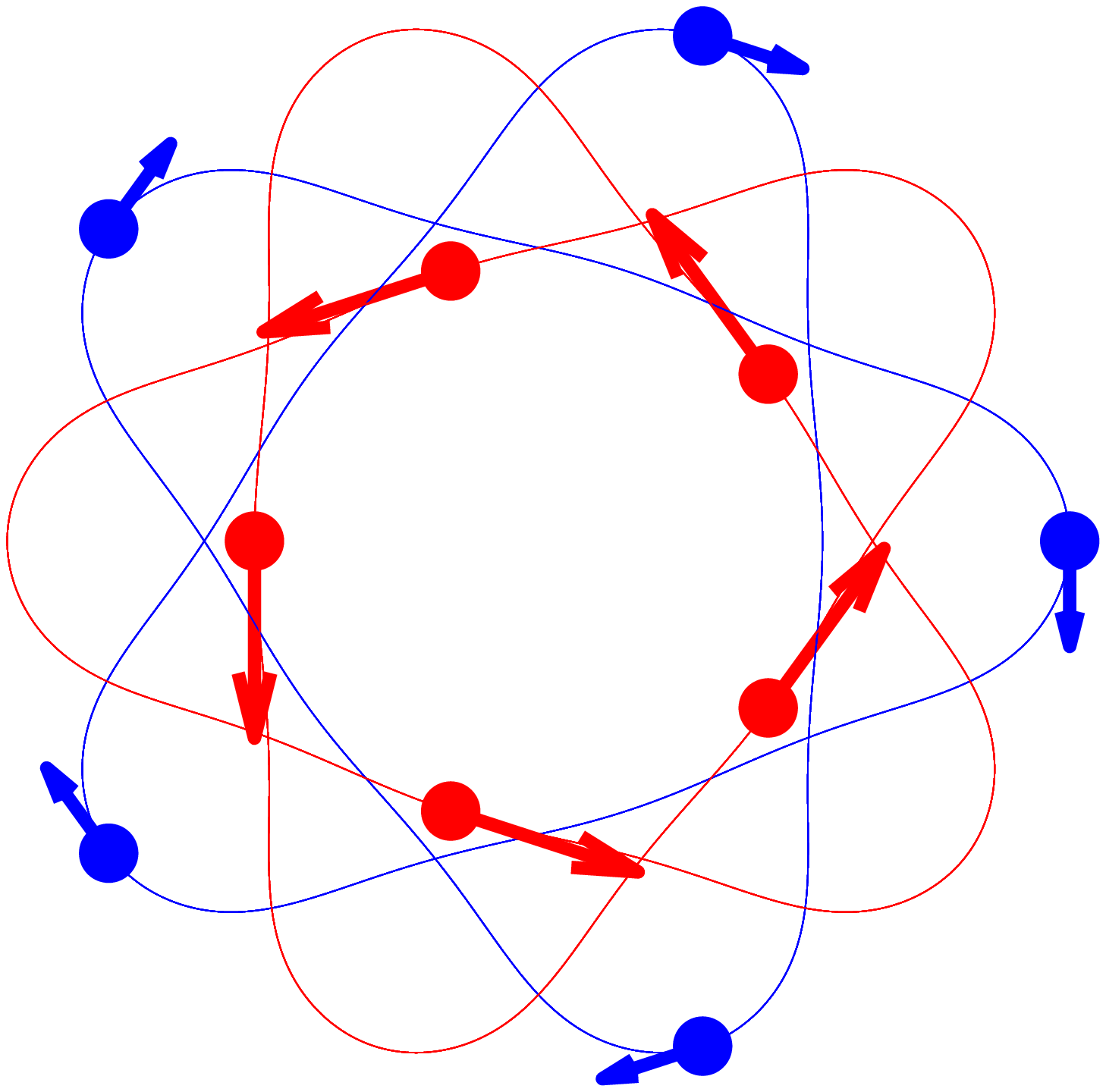}\hspace{0.5cm}
\includegraphics[width=1.8cm]{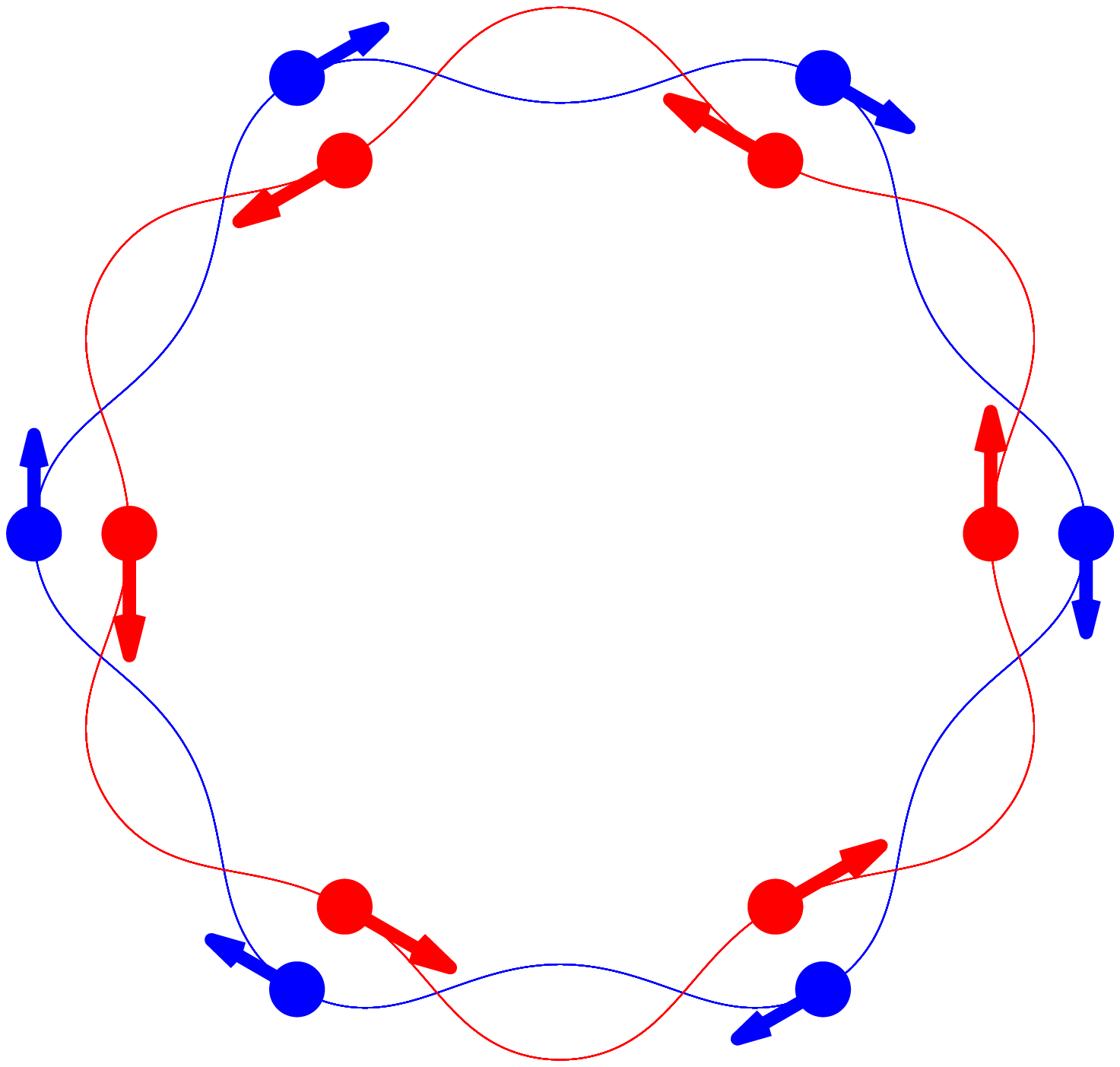}\hspace{0.5cm}
\includegraphics[width=1.8cm]{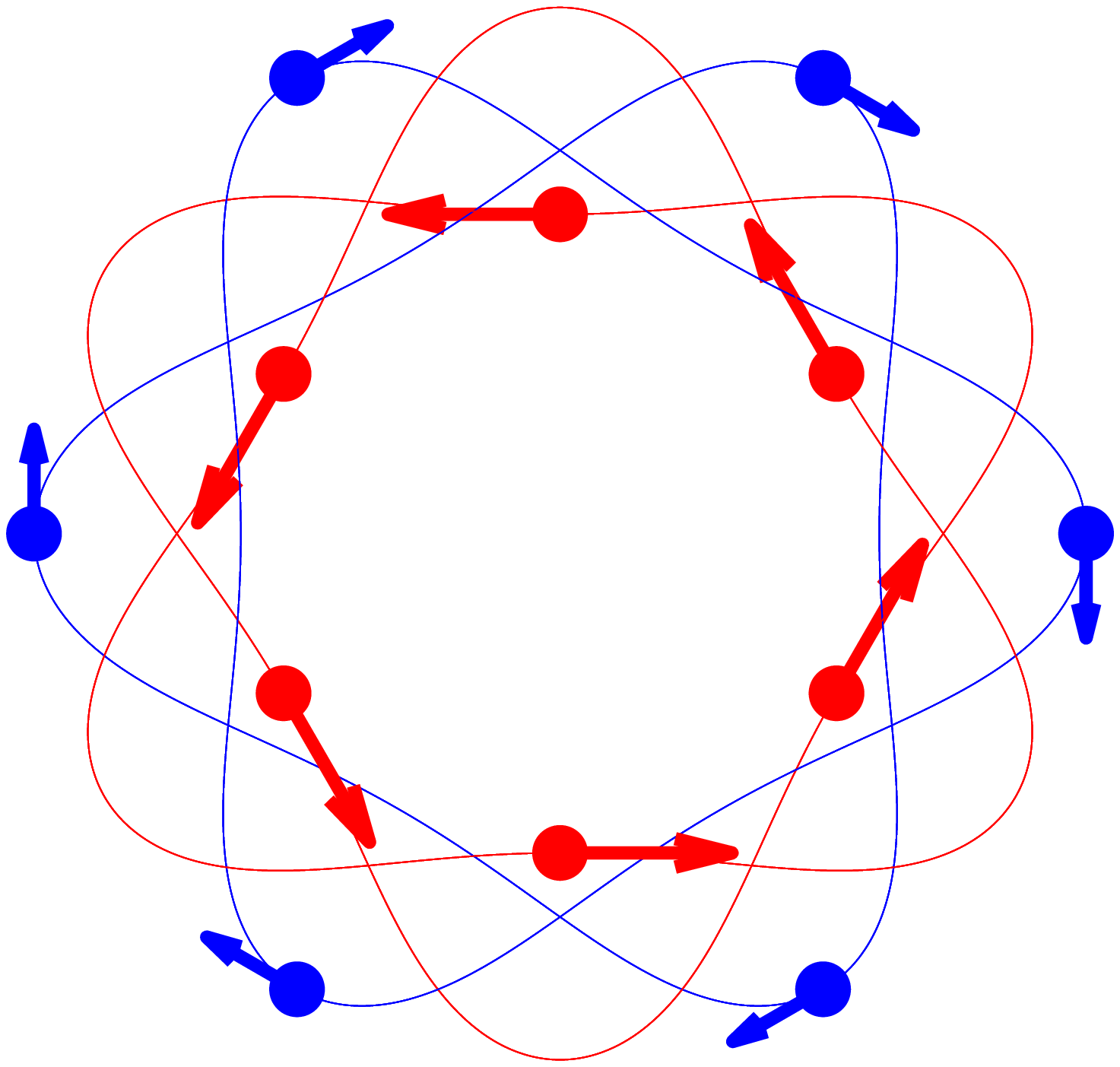}\hspace{0.5cm}
$t=\frac{dT}{4n}$

\includegraphics[width=1.8cm]{BRn=5_p=1_1.eps}\hspace{0.5cm}
\includegraphics[width=1.8cm]{BRn=5_p=2_1.eps}\hspace{0.5cm}
\includegraphics[width=1.8cm]{BRn=6_p=1_1.eps}\hspace{0.5cm}
\includegraphics[width=1.8cm]{BRn=6_p=2_1.eps}\hspace{0.5cm}
$t=0$

(5) \hspace{1.7cm}  (6) \hspace{1.7cm}  (7) \hspace{1.7cm}  (8) \hspace{2.0cm}

\caption{
${\bm x}_{n,p}(t)$ for  $0 \le t \le (\frac{d}{n})T$: 
(1) ${\bm x}_{2,1}(t)$. 
(2) ${\bm x}_{3,1}(t)$. 
(3) ${\bm x}_{4,1}(t)$. 
(4) ${\bm x}_{4,2}(t)$. 
(5) ${\bm x}_{5,1}(t)$. 
(6) ${\bm x}_{5,2}(t)$. 
(7) ${\bm x}_{6,1}(t)$. 
(8) ${\bm x}_{6,2}(t)$. 
}
\label{fig_orbit}
\end{center}

\end{figure}

\begin{thm}
\label{thm_braidtype}
For $n \ge 2$ and $p \in \{1, \dots, \lfloor  \frac{n}{2}\rfloor \}$, 
the braid type $Y_{n,p}$  is pseudo-Anosov with the stretch factor $(\mathfrak{s}_{2p})^2$. 
In particular,  the braid type $X_{n,p}$ of the solution ${\bm x}_{n,p}(t)$  is pseudo-Anosov 
with the stretch factor  $(\mathfrak{s}_{2p})^{\frac{2n}{d}}$. 
\end{thm}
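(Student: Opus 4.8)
The plan is to analyze a concrete representative of $Y_{n,p}$, namely the $2n$-braid $\beta_{n,p}$ from Section~\ref{section_proof-of-theorem}, as a mapping class of the $2n$-punctured disk, and to produce an invariant train track whose transition matrix has spectral radius $(\mathfrak{s}_{2p})^2$. The shape of the target governs the whole computation: since $\mathfrak{s}_{2p}=p+\sqrt{p^2+1}$ is the larger root of $x^2-2px-1$, its square satisfies $(\mathfrak{s}_{2p})^2 = 2p\,\mathfrak{s}_{2p}+1$ and is the larger root of the quadratic $x^2-(4p^2+2)x+1$. Hence the goal is to exhibit transition data for $\beta_{n,p}$ whose Perron--Frobenius eigenvalue is the larger root of this quadratic; equivalently, in the language of Thurston's construction, to realize $\beta_{n,p}$ as a product $T_A T_B^{-1}$ of multitwists about filling curve systems $A,B$ for which the associated number $\mu$ (the spectral radius of the intersection matrix $N N^{T}$) equals $4p^2$, so that the image in $PSL(2,{\Bbb R})$ has trace $2+\mu = 4p^2+2$.

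The first step is to read off $A$ and $B$ from the geometry of $\beta_{n,p}$. The permutation $\sigma_{n,p}=(1,3,\dots,2n-1)^p(2,4,\dots,2n)^{-p}$ moves the odd-indexed punctures forward by $p$ and the even-indexed punctures backward by $p$, so the braid is built from two interleaved rotating families of strands; the crossings produced as the odd family slides $p$ steps against the even family are exactly what I would record as the intersections of $A$ with $B$. Because the configuration is invariant under the order-$n$ rotation that shifts each family by one position, and $\beta_{n,p}$ is equivariant for it, I would quotient the punctured disk by this finite rotational symmetry: the quotient is a disk with a central cone point of order $n$ and two marked points (one from the odd orbit, one from the even orbit), on which the induced map is a single product of opposite twists whose $2\times 2$ Thurston matrix is transparent, with twisting parameter $2p$ and hence $\mu=(2p)^2=4p^2$. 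Since a pseudo-Anosov map and the one it covers share invariant foliations, the stretch factor computed downstairs equals $\lambda(\beta_{n,p})$, reducing everything to a $2\times 2$ determinant and yielding $(\mathfrak{s}_{2p})^2$. (If one prefers to stay upstairs, one instead builds the train track on the $2n$-punctured disk directly and checks that the characteristic polynomial of the full transition matrix carries its spectral radius in the factor $x^2-(4p^2+2)x+1$.)

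The main obstacle is the verification that this construction certifies the pseudo-Anosov type and the exact value, rather than merely producing a candidate. In the Thurston picture this means proving that $A\cup B$ fills the surface, after which Thurston's theorem guarantees that $T_A T_B^{-1}$ is pseudo-Anosov---its image being hyperbolic, as $|2+\mu|>2$---with stretch factor the larger eigenvalue of the image; in the train-track picture it means proving that the carrying map is efficient, with no backtracking or cancellation under iteration, and that its transition matrix is Perron--Frobenius (irreducible and aperiodic), so that its spectral radius is the genuine stretch factor and the periodic and reducible cases are excluded. I expect the filling/efficiency check to be the delicate point, requiring a careful account of how the $p$-fold sliding of the two families distributes intersections, and this is where the hypothesis $p\le\lfloor n/2\rfloor$ should enter.

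Finally, the ``in particular'' clause is immediate. The braid type $X_{n,p}$ is represented by $(\beta_{n,p})^{\frac{n}{d}}=(y_{n,p})^{\frac{n}{d}}$, and a positive power of a pseudo-Anosov map is pseudo-Anosov with the same invariant foliations and with stretch factor raised to that power (see Remark~\ref{remark_power}); hence $X_{n,p}$ is pseudo-Anosov with $\lambda(X_{n,p})=\lambda(Y_{n,p})^{\frac{n}{d}}=\big((\mathfrak{s}_{2p})^2\big)^{\frac{n}{d}}=(\mathfrak{s}_{2p})^{\frac{2n}{d}}$, as claimed.
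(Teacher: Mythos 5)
Your computation of the stretch factor of $\beta_{n,p}$ is essentially the paper's argument run in the opposite direction: the paper lifts $\sigma_1^{2p}\sigma_2^{-2p}$ (acting on the quotient sphere with two punctures and two marked branch points) through an $n$-fold branched cover to obtain $\phi_p=\widetilde{a}^{\,p}\widetilde{b}^{\,p}$ on $\Sigma_{0,2n}$, whereas you descend by the order-$n$ rotational symmetry; either way the number $(\mathfrak{s}_{2p})^2$ comes from the trace-$(4p^2+2)$ matrix $M_{(2p,2p)}$, exactly as in Example \ref{ex_metallic-3braid}. Two points in this half of your argument need attention. First, after the quotient/lift you only control $\beta_{n,p}$ as a mapping class of the $2n$-punctured disk \emph{with the center additionally marked} (it is a cone point of the quotient orbifold). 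To conclude for $\beta_{n,p}\in B_{2n}$ you must forget that point, and this is legitimate only because the invariant foliations are not $1$-pronged there; the paper proves precisely this (the Claim: the foliations are $n$-pronged at $0$ and $\infty$ because the downstairs foliations are $1$-pronged at the branch points $\underline{0},\underline{\infty}$). You do not address this. Second, your expectation that the hypothesis $p\le\lfloor n/2\rfloor$ enters in the filling/efficiency check is misplaced: Step 1 of the paper holds for \emph{all} $p\ge 1$ with no restriction; the bound on $p$ is needed only because the periodic solution ${\bm x}_{n,p}(t)$ is only known to exist in that range.

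The genuine gap is that you never prove $\beta_{n,p}$ is a representative of $Y_{n,p}$. The theorem is a statement about the braid type of the trajectory braid $y_{n,p}=b({\bm x}_{n,p}(t),[0,\tfrac{d}{n}T])$ of an actual solution, and the only datum you invoke is the underlying permutation $\sigma_{n,p}$ — but infinitely many non-conjugate braids induce the same permutation, so this does not pin down $\langle y_{n,p}\rangle$. The paper's Step 2 supplies the missing dynamical input via the shape-sphere analysis of Section \ref{subsection_shapesphare}: the shape curve satisfies ${\bm u}(0)\in\mathrm{B}_{-1}$, ${\bm u}(\bar T)\in\mathrm{B}_{2p-1}$, ${\bm u}(2\bar T)\in\mathrm{B}_{4p-1}$ with $u_1(t)>0$ on $(0,\bar T)$ and $u_1(t)<0$ on $(\bar T,2\bar T)$, which determines which family of bodies passes in front of the other on each half-interval and hence identifies the crossing pattern of the strands with $v_n^p$ followed by $u_n^p$. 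Without some such argument tying the geometry of the solution to the word $u_n^p v_n^p$, your proof establishes properties of an abstractly chosen braid but not of the braid type of the solution, and the first sentence of the theorem remains unproved. (Your final paragraph — the passage from $Y_{n,p}$ to $X_{n,p}$ via Remark \ref{remark_power} — is correct and matches the paper.)
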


Since  $\mathfrak{s}_k < \mathfrak{s}_{k'}$ if $k < k'$, 
we immediately have the following result.

\begin{cor}
\label{cor_prime}
Let $X_{n,p}$ be the braid type as in Theorem \ref{thm_braidtype}. 
For $n \ge 2$ and  $p, p' \in \{1, \dots, \lfloor  \frac{n}{2}\rfloor\}$ with $p< p'$,  
we have the following. 

\begin{enumerate}
\item[(1)] 
$\lambda(X_{n,p}) < \lambda(X_{n,p'})$ if $\gcd(n,p)= \gcd(n, p')$. 
In particular 
$X_{n,p} \ne X_ {n,p'}$. 

\item[(2)]  
$\lambda(X_{n,p}) < \lambda(X_{n,p'})$ if $n$ is prime. 
In particular 
$X_{n,p} \ne X_ {n,p'}$. 
\end{enumerate}
\end{cor}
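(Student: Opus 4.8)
The plan is to read the stretch factors directly off Theorem \ref{thm_braidtype} and compare them, using monotonicity of the metallic ratios together with the fact, recalled in the introduction, that the stretch factor is a conjugacy invariant of pseudo-Anosov braid types.

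First I would settle part (1). Assume $\gcd(n,p)=\gcd(n,p')=:d$. By Theorem \ref{thm_braidtype},
$$\lambda(X_{n,p})=(\mathfrak{s}_{2p})^{\frac{2n}{d}} \quad \text{and} \quad \lambda(X_{n,p'})=(\mathfrak{s}_{2p'})^{\frac{2n}{d}},$$
so under the common-gcd hypothesis both stretch factors are obtained by raising $\mathfrak{s}_{2p}$ and $\mathfrak{s}_{2p'}$ to the \emph{same} positive exponent $\frac{2n}{d}$. Since $p<p'$ gives $2p<2p'$, the monotonicity $\mathfrak{s}_{2p}<\mathfrak{s}_{2p'}$ stated just before the corollary, combined with $\mathfrak{s}_{2p}>1$, yields $\lambda(X_{n,p})<\lambda(X_{n,p'})$ after raising to the positive power.

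For part (2), I would observe that when $n$ is prime every index $p\in\{1,\dots,\lfloor\frac{n}{2}\rfloor\}$ satisfies $1\le p<n$, so $n\nmid p$ and hence $\gcd(n,p)=1$; the identical reasoning applies to $p'$. Thus $\gcd(n,p)=\gcd(n,p')=1$, and part (2) becomes a special case of part (1). Finally, in both parts the strict inequality $\lambda(X_{n,p})\ne\lambda(X_{n,p'})$ forces $X_{n,p}\ne X_{n,p'}$, because the stretch factor is a conjugacy invariant of pseudo-Anosov braids and therefore a well-defined invariant of the braid type. There is essentially no obstacle in the argument; the only point demanding attention is the coincidence of exponents. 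The hypothesis $\gcd(n,p)=\gcd(n,p')$ is precisely what guarantees that the comparison reduces to comparing $\mathfrak{s}_{2p}$ with $\mathfrak{s}_{2p'}$, rather than two distinct powers of metallic ratios—for different gcds one would be comparing $(\mathfrak{s}_{2p})^{2n/d}$ with $(\mathfrak{s}_{2p'})^{2n/d'}$, and no immediate ordering need hold.
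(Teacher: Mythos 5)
Your argument is correct and is exactly the route the paper takes: the corollary is stated as an immediate consequence of the stretch factor formula $\lambda(X_{n,p})=(\mathfrak{s}_{2p})^{\frac{2n}{d}}$ from Theorem \ref{thm_braidtype} together with the monotonicity $\mathfrak{s}_k<\mathfrak{s}_{k'}$ for $k<k'$, with part (2) reducing to part (1) because a prime $n$ forces $\gcd(n,p)=\gcd(n,p')=1$. Your added remark about why the equal-gcd hypothesis is needed (to make the exponents coincide) is a correct and worthwhile clarification, but the substance matches the paper.
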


The $2n$ bodies for the solution ${\bm x}_{n,p}(t)$ form a regular $2n$-gon at the initial time $t=0$, 
and  the next first time is  $t= (\tfrac{d}{2n})T$ when the $2n$ bodies form a regular $2n$-gon again. 
See Figure \ref{fig_orbit} for  ${\bm x}_{n,p}(t)$ at $t= (\tfrac{d}{2n})T$. 
From the viewpoint of the configuration of the ``next" regular $2n$-gon, 
it is proved in \cite{Shibayama06} that 
${\bm x}_{n,p}(t)$ and  ${\bm x}_{n,p'}(t)$ are distinct solutions for distinct $p, p' \in  \{1, \dots, \lfloor  \frac{n}{2}\rfloor\}$ 
(Remark \ref{remark_distinct}).  
On the other hand, 
from the viewpoint of braid types, 
Corollary \ref{cor_prime} tells us that 
$X_{n,p}$ is different from  $X_ {n,p'}$ if $\gcd(n,p)= \gcd(n, p')$.

Table \ref{table_entropy} shows the stretch factor 
$\lambda_ {n,p}= \lambda(X_{n,p})$  and the entropy $\log(\lambda_ {n,p})$ 
for  several pairs $(n,p)$. 
One can see from this table that 
$\lambda(X_{n,p}) \ne \lambda(X_{n,p'})$ if $p \ne p'$ up to $n=11$. 
Therefore the braid types $X_{n,p}$ and  $X_{n,p'}$ of the solutions  for $p \neq p' \in \{1, \dots, \lfloor  \frac{n}{2}\rfloor \}$  are distinct
up to $n=11$.

Because of an intriguing formula of metallic ratios, $\mathfrak{s}_k^3 = \mathfrak{s}_{k^3+ 3k}$ for example, 
stretch factors $\lambda(X_{n,p})$ happen to coincide with the ones for different pairs $(n,p')$ occasionally
(see Example \ref{example_occasionally}). 
Nevertheless, we conjecture that 
$X_{n,p} \ne X_ {n,p'}$ for all $p, p' \in \{1, \dots, \lfloor  \frac{n}{2}\rfloor\}$ with $p \ne p'$.

\begin{table}[hbtp]
\caption{
Some examples obtained from Theorem \ref{thm_stretch-factor}.} 
\label{table_entropy}
\begin{center}
\begin{tabular}{|c|c|c|c|c|}
\hline
$(n,p)$ & $d$ & $X_{n,p}= \langle (\beta_{n,p})^{\frac{n}{d}} \rangle $ & 
$\lambda_{n,p}= (\mathfrak{s}_{2p})^{\frac{2n}{d}}$ & $\log( \lambda_{n,p}) =(\tfrac{2n}{d}) \log(\frak{s}_{2,p})$ \\ \hline
\hline
$(2,1)$ & $1$ & $(\beta_{2,1})^2$ & $ (\mathfrak{s}_2)^4$ &  3.525494348078172
 \\ \hline
\hline
$(3,1)$ & $1$ & $(\beta_{3,1})^3$ & $ (\mathfrak{s}_2)^6$ &  5.288241522117257  \\ \hline
\hline
$(4,1)$ & $1$ & $(\beta_{4,1})^4$ & $ (\mathfrak{s}_2)^8$ & 7.050988696156343  \\ \hline
$(4,2)$ & $2$ & $(\beta_{4,2})^2$ & $ (\mathfrak{s}_4)^4$ &  5.774541900715241 \\ \hline
\hline
$(5,1)$ & $1$ & $(\beta_{5,1})^5$ & $ (\mathfrak{s}_2)^{10}$ &  8.813735870195430 \\ \hline
$(5,2)$ & $1$ & $(\beta_{5,2})^5$ & $ (\mathfrak{s}_4)^{10}$ &  14.436354751788103 \\ \hline
\hline
$(6,1)$ & $1$ & $(\beta_{6,1})^6$ & $ (\mathfrak{s}_2)^{12}$ & 10.576483044234514  \\ \hline
$(6,2)$ & $2$ & $(\beta_{6,2})^3$ & $ (\mathfrak{s}_4)^{6}$ & 8.661812851072861 \\ \hline
$(6,3)$ & $3$ & $(\beta_{6,3})^2$ & $ (\mathfrak{s}_6)^{4}$ &  7.273785836928267 \\ \hline
\hline
$(7,1)$ & $1$ & $(\beta_{7,1})^7$ & $ (\mathfrak{s}_2)^{14}$ & 12.339230218273601  \\ \hline
$(7,2)$ & $1$ & $(\beta_{7,2})^7$ & $ (\mathfrak{s}_4)^{14}$ &  20.210896652503344  \\ \hline
$(7,3)$ & $1$ & $(\beta_{7,3})^7$ & $ (\mathfrak{s}_6)^{14}$ &  25.458250429248935 \\ \hline
\hline
$(8,1)$ & $1$ & $(\beta_{8,1})^8$ & $ (\mathfrak{s}_2)^{16}$ &  14.101977392312687 \\ \hline
$(8,2)$ & $2$ & $(\beta_{8,2})^4$ & $ (\mathfrak{s}_4)^{8}$ & 11.549083801430482  \\ \hline
$(8,3)$ & $1$ & $(\beta_{8,3})^8$ & $ (\mathfrak{s}_6)^{16}$ &  29.095143347713069 \\ \hline
$(8,4)$ & $4$ & $(\beta_{8,4})^2$ & $ (\mathfrak{s}_8)^{4}$ & 8.378850189044405  \\ \hline
\hline
$(9,1)$ & $1$ & $(\beta_{9,1})^9$ & $ (\mathfrak{s}_2)^{18}$ &  15.864724566351773 \\ \hline
$(9,2)$ & $1$ & $(\beta_{9,2})^9$ & $ (\mathfrak{s}_4)^{18}$ & 25.985438553218586  \\ \hline
$(9,3)$ & $3$ & $(\beta_{9,3})^3$ & $ (\mathfrak{s}_6)^{6}$ &  10.910678755392400 \\ \hline
$(9,4)$ & $1$ & $(\beta_{9,4})^9$ & $ (\mathfrak{s}_8)^{18}$ &  37.704825850699820 \\ \hline
\hline 
$(10,1)$ & $1$ & $(\beta_{10,1})^{10}$ & $ (\mathfrak{s}_2)^{20}$ &  17.627471740390860 \\ \hline
$(10,2)$ & $2$ & $(\beta_{10,2})^{5}$ & $ (\mathfrak{s}_4)^{10}$ & 14.436354751788103  \\ \hline
$(10,3)$ & $1$ & $(\beta_{10,3})^{10}$ & $ (\mathfrak{s}_6)^{20}$ & 36.368929184641338  \\ \hline
$(10,4)$ & $2$ & $(\beta_{10,4})^{5}$ & $ (\mathfrak{s}_8)^{10}$ & 20.947125472611013  \\ \hline
$(10,5)$ & $5$ & $(\beta_{10,5})^{2}$ & $ (\mathfrak{s}_{10})^{4}$ & 9.249753365091010  \\ \hline
\hline
$(11,1)$ & $1$ & $(\beta_{11,1})^{11}$ & $ (\mathfrak{s}_2)^{22}$ &  19.390218914429944 \\ \hline
$(11,2)$ & $1$ & $(\beta_{11,2})^{11}$ & $ (\mathfrak{s}_4)^{22}$ &  31.759980453933828 \\ \hline
$(11,3)$ & $1$ & $(\beta_{11,3})^{11}$ & $ (\mathfrak{s}_6)^{22}$ &  40.005822103105473 \\ \hline
$(11,4)$ & $1$ & $(\beta_{11,4})^{11}$ & $ (\mathfrak{s}_8)^{22}$ & 46.083676039744226  \\ \hline
$(11,5)$ & $1$ & $(\beta_{11,5})^{11}$ & $ (\mathfrak{s}_{10})^{22}$ & 50.873643508000555  \\ \hline
\end{tabular}
\end{center}
\end{table}


The organization of the paper is as follows. 
In Section \ref{sec:xnp} 
we introduce a family of periodic solutions ${\bm x}_{n,p}(t)$ in \cite{Shibayama06} 
of the planar $2n$-body problem. 
In Section \ref{section_braid-groups} 
we briefly review the necessarily background on braid groups. 
We prove Theorem \ref{thm_braidtype} in Section \ref{section_proof-of-theorem}.  
In Section \ref{section_numerical}, we give new numerical periodic solutions 
${\bm x}_{n,p}(t)$ of the planar $2n$-body problem 
 when  $p> \lfloor \frac{n}{2}\rfloor$.


\section{Periodic solutions of the planar $2n$-body problem}
\label{sec:xnp}

This section is devoted to explain the periodic solutions ${\bm x}_{n,p}(t)$.  
The existence was proven with the variational method. 
They have high symmetries because they can be represented as elements of a functional space limited by several group actions.
The minimizers of the action functional under the symmetry correspond to those solutions.
They are also regarded as orbits on the {\it shape sphere}.
They are constructed through minimizing methods, and
we omit analytic techniques for the proof and describe geometric properties of ${\bm x}_{n,p}(t)$ 
including the group actions and shape sphere.

\subsection{Symmetry}
\label{subsection_symmetry}
Let $G$ be a finite group. 
We consider a $2$-dimensional orthogonal representation 
$\rho:  G \to O(2)$, 
a homomorphism 
$\sigma:  G \to \mathfrak{S}_{2n}$ 
to the symmetric group on $2n$ elements, and 
another $2$-dimensional orthogonal representation 
$\tau : G \to O(2) $. 
We will denote by $\Lambda$, the set of $T$-periodic orbits.
The action of $G$ to $\Lambda$ is defined by
\[
g \cdot ( (x_1,\dots,x_{2n})(t)) = (\rho(g)x_{\sigma(g^{-1})(1)},\dots,\rho(g)x_{\sigma(g^{-1})(2n)})(\tau (g^{-1})(t))
\]
for $g \in G$ and ${\bm x}(t)=(x_1,\dots,x_{2n})(t) \in \Lambda$, 
where the above $\rho$, $\sigma$, $\tau$ represent respectively actions of $G$ on ${\Bbb R}^2$ by 
orthogonal transformations, on indices $\{1, 2, \dots, 2n\}$ by permutations, and 
on the circle ${\Bbb R}/T {\Bbb Z}$. 
Specifically, we take $G$ as the group $G_{n,p}:=\langle g_{n}, h_{n,p} \rangle$ 
generated by the two elements $g_{n}$ and $h_{n,p}$, 
where
\begin{align*}
\rho(g_n)&=
\left(\begin{array}{cc}\cos(\frac{\pi}{n}) & -\sin(\frac{\pi}{n}) \\ \sin(\frac{\pi}{n}) & \cos(\frac{\pi}{n})\end{array}\right),\\
\sigma(g_n)&=(1,2,\dots,2n),\\
\tau(g_n)&=\left(\begin{array}{cc}1 & 0 \\0 & -1\end{array}\right)\ \mbox{and}\\
\rho(h_{n,p})&=1,\\
\sigma(h_{n,p})&=(1,3,\dots,2n-1)^{-p} (2,4,\dots,2n)^p,\\
\tau(h_{n,p})&=\left(\begin{array}{cc}\cos(\frac{2 \pi d}{n}) & -\sin(\frac{2 \pi d}{n}) \\ \sin(\frac{2 \pi d}{n}) & \cos(\frac{2 \pi d}{n})\end{array}\right) \hspace{2mm} 
(d:= \gcd(n,p)).
\end{align*}
Let us denote by  $\Lambda^G_{n,p}$, the invariant set under the action of $G_{n,p}$ in $\Lambda$, i.e.,
\begin{align*}
\Lambda^G_{n,p} = \{{\bm  x}(t) \in \Lambda \mid 
x_i (t) = \rho(g)&x_{\sigma(g^{-1})(i)}(\tau (g^{-1})(t)) \\
& (i=1,2,\dots,2n, \ g \in G_{n,p} ,\ t \in \mathbb{R})  
\}.
\end{align*}
We now check the properties of $\Lambda^{G}_{n,p}$. 
First, from the invariance under $g_n$, 
we have 
$$g_n\cdot ( (x_1,x_2, \dots,x_{2n})(t)) = (e^{\frac{\pi i}{n}} x_{2n}, e^{\frac{\pi i}{n}}x_1, \dots, e^{\frac{\pi i}{n}}x_{2n-1})(-t).$$
Here we identify $\mathbb{R}^2$ with $\mathbb{C}$.
In particular, 
$x_1(t), x_2(-t), \dots ,x_{2n-1}(t), x_{2n}(-t)$ forms a regular $2n$-gon, and 
$n$ bodies with odd indices  and $n$ bodies with even  indices rotate in mutually opposite directions. 

Second, since
\begin{align*}
\rho(g_n^2)&=
\left(\begin{array}{cc}\cos(\frac{2\pi}{n}) & -\sin(\frac{2\pi}{n}) \\ \sin(\frac{2\pi}{n}) & \cos(\frac{2\pi}{n})\end{array}\right),\\
\sigma(g_n^2)&=(1,3,\dots,2n-1)(2,4,\dots,2n) \text{\ and}\\
\tau(g_n^2)&=1,
\end{align*}
the configuration always consists of two regular $n$-gons,
which are formed by $n$ bodies $x_1(t), x_3(t), \dots, x_{2n-1}(t)$ 
of odd indices and 
$n$ bodies $x_2(t), x_4(t), \dots, x_{2n}(t)$ 
of even indices. 
Thus, to determine the positions of $2n$ bodies $x_1, \dots, x_{2n}$, 
it is sufficient to know the positions of two bodies $x_1$ and $x_2$. 
In fact 
for each $k \in \{1,\dots,n \}$ and $t \in \mathbb{R}$,
\begin{align*}
x_{2k-1}(t) = \omega^{(k-1)} x_1(t), \
x_{2k}(t) = \omega^{(k-1)} x_2(t),
\end{align*}
where $\omega = e^{2\pi i/n}$.
This enables us to use the shape sphere (introduced in Section \ref{subsection_shapesphare}) 
which represents configurations of $2n$ bodies in the periodic solutions.

Lastly, the invariance under $h_{n,p}$ tells us that 
$$h_{n,p} \cdot ((x_1, x_2, \dots, x_{2n})(t)) 
= (x_ {\sigma(h_{n,p}^{-1})(1)}, x_ {\sigma(h_{n,p}^{-1})(2)},\dots, x_ {\sigma(h_{n,p}^{-1})(2n)} ) (t - \tfrac{dT}{n}),$$ 
and hence 
\[
x_{i}\left( t+ \tfrac{dT}{n} \right) = x_{\sigma(h_{n,p}^{-1}) (i)}(t) \hspace{2mm} (i=1,2,\dots ,2n),
\]
where 
\begin{equation} 
\label{equation_permutation}
\sigma(h_{n,p}^{-1})= (1,3, \dots, 2n-1)^p (2,4, \dots, 2n)^{-p} \in \mathfrak{S}_{2n}. 
\end{equation} 
This implies that  
 ${\bm x}_{n,p}(t)$ consists of $2d$ closed curves and 
$\tfrac{n}{d} $ bodies chase one another along each closed curve. 
See Figure \ref{fig_orbit}. 

\subsection{The shape sphere}
\label{subsection_shapesphare}
We consider the group action on the circle $S^1$ to $\mathbb{C}^2$ by
\[
z \cdot (x_1,x_2) = (zx_1,zx_2), \ z \in S^1, (x_1,x_2) \in \mathbb{C}^2.
\]
The quotient space $(\mathbb{C}^2 - \{0\}) / S^1$ under the above action is realized by the following projection:
\begin{align*}
\pi \colon \mathbb{C}^2 - \{ {0} \} &\longrightarrow \mathbb{R}^3 - \{ { 0} \} \ (\cong (\mathbb{C}^2 - \{0\}) / S^1) \\
(x_1,x_2) &\longmapsto {\bm u}(t)=(u_1,u_2,u_3)
\end{align*}
where 
\begin{align*}
(u_1,u_2,u_3)
&=(|x_1|^2-|x_2|^2, 2 \mathrm{Re}(x_1 \bar{x_2}), 2 \mathrm{Im}(x_1 \bar{x_2}))\\
&=(r_1^2-r_2^2, 2 r_1r_2 \cos(\theta_1 - \theta_2), 2r_1r_2 \sin(\theta_1-\theta_2)).
\end{align*}
Here $x_1=r_1 e^{i\theta_1}$ and $x_2 = r_2 e^{i \theta_2}$.
Set the rays
\begin{align*}
\mathrm{A}_{\pm} &= \{ (\pm s, 0, 0) \mid s \in  \mathbb{R}_{>0}  \},\\
\mathrm{B}_{2k} &=\left\{
\left( 0, s \cos (\frac{2 \pi k}{n} ), s \sin (\frac{2 \pi k}{n} ) \mid s \in \mathbb{R}_{>0} \right)
\right\} \ (k \in \mathbb{Z})\ \mbox{and}\\
\mathrm{B}_{2k-1} &= \left\{
\left( 0, s \cos (\frac{(2k-1)\pi }{n} ), s \sin (\frac{(2k-1)\pi }{n} ) \mid s \in \mathbb{R}_{>0} \right)
\right\} \ (k \in \mathbb{Z}).
\end{align*}
In the quotient space $(\mathbb{C}^2 - \{0\}) / S^1$, 
the sets 
$\mathrm{A}_{\pm}$ and $\mathrm{B}_{2k}$ correspond to collisions of the original $2n$ bodies. 
If ${\bm u}(t) \in \mathrm{A}_{+}$ (resp. $\mathrm{A}_-$), then all bodies with odd (resp. even) indeces collide at $t \in {\Bbb R}$
and if ${\bm u}(t) \in \mathrm{B}_{2k}$, then two regular $n$-gons fit. 
See Figure \ref{diagram} 
for the configurations of $8$ bodies corresponding to 
$\mathrm{B}_{0}$,  $\mathrm{B}_{2}$,  $\mathrm{B}_{4}$ and $\mathrm{B}_{6}$.

Let $\bm{u}(t)(=\bm{u}_{n, p}(t))$ be a curve corresponding to the solution $\bm{x}_{n, p}(t)$. 
As a result, ${\bm u}(t)$ passes through neither $\mathrm{A}_{\pm}$ nor $\mathrm{B}_{2k}$, 
because  ${\bm x}_{n,p}(t)$ has no collision (\cite[Proposition $3$]{Shibayama06}). 
On the other hand, each $\mathrm{B}_{2k-1}$ represents  a configuration where $2n$ bodies form a regular $2n$-gon. 
See Figure \ref{diagram} 
for the configurations of $8$ bodies corresponding to 
$\mathrm{B}_{-1}$,  $\mathrm{B}_{1}$,  $\mathrm{B}_{3}$ and $\mathrm{B}_{5}$.

Set
\[
M(k) = 
\left(\begin{array}{ccc}-1 & 0 & 0 \\0 & \cos (\frac{2\pi k}{n}) & \sin (\frac{2\pi k}{n})   \\ 0 &  \sin (\frac{2\pi k}{n})  &  -\cos (\frac{2\pi k}{n}) \end{array}\right).
\]
It is easy to see that $M(k)$ is an orthogonal matrix with eigenvalues $\lambda =1,-1$.
The eigenvector for $\lambda=1$ is $\mathrm{B}_k$, 
and hence  $M(k)$ represents $\pi$-rotation with respect to $\mathrm{B}_k$.
The invariance under $g_n$ is associated with
\begin{align*}
 \left(\begin{array}{c}u_1(-t) \\u_2(-t) \\u_3(-t) \end{array}\right)
=
M(-1)
\left(\begin{array}{c}u_1(t) \\u_2(t) \\u_3(t)\end{array}\right)
\end{align*}
and it implies that ${\bm u}(t)$ and ${\bm u}(-t)$ are symmetric with respect to $\mathrm{B}_{-1}$.
In other words, rotating this curve $\pi$ with respect to $\mathrm{B}_{-1}$,
${\bm u}(t)$ coincides with ${\bm u}(-t)$.
In particular ${\bm u}(0) \in \mathrm{B}_{-1}$.
Similarly, the invariance under $h_{n,p}$ is associated with
\begin{align*}
\left(\begin{array}{c}u_1(t + 2\bar{T}) \\u_2(t +2 \bar{T}) \\u_3(t + 2\bar{T}) \end{array}\right)
=
\left(\begin{array}{ccc}1 & 0 & 0 \\0 & \cos (\frac{4\pi p}{n})  & -\sin (\frac{4\pi p}{n})  \\0 & \sin (\frac{4\pi p}{n}) & \cos (\frac{4\pi p}{n})  \end{array}\right)
\left(\begin{array}{c}u_1(t) \\u_2(t) \\u_3(t)\end{array}\right),
\end{align*}
where $\bar{T}=\frac{dT}{2n}$.
Substituting $ -(t+\bar{T})$ into $t$ and applying the invariance under $g_{n}$, we obtain
\begin{align*}
\left(\begin{array}{c}u_1(-t+\bar{T}) \\u_2(-t+\bar{T}) \\u_3(-t+\bar{T}) \end{array}\right)
=
M(2p-1)
\left(\begin{array}{c}u_1(t+\bar{T}) \\u_2(t+\bar{T}) \\u_3(t+\bar{T})\end{array}\right).
\end{align*}
Taking $t=0$ gives
\begin{align*}
\left(\begin{array}{c}u_1(\bar{T}) \\u_2(\bar{T}) \\u_3(\bar{T}) \end{array}\right)
=
M(2p-1)
\left(\begin{array}{c}u_1(\bar{T}) \\u_2(\bar{T}) \\u_3(\bar{T})\end{array}\right),
\end{align*}
and hence  ${\bm u}(t+\bar{T})$ and ${\bm u}(-t + \bar{T})$ are symmetric with respect to $\mathrm{B}_{2p-1}$ and
${\bm u}(\bar{T}) \in \mathrm{B}_{2p-1}$.
It means that the original configuration of $2n$ bodies forms a regular $2n$-gon again at  $t=\bar{T}$.
Other symmetries with respect to $\mathrm{B}_{2jp-1}$ for $j=2,3, \dots$ can be seen in the same manner.

\begin{remark}
\label{remark_distinct}
It is proved in \cite[Proposition $5$]{Shibayama06} that 
${\bm u}(t) \notin \mathrm{B}_{2k-1}$ for all $t \in (0, \bar{T})$ 
and $k \in \mathbb{Z}$ .
It implies that  
 ${\bm x}_{n,p}(t)$ and ${\bm x}_{n,p'}(t)$ are distinct smooth solution for $p \neq p'$ in the sense that 
 ${\bm u}(\bar{T})$ belongs to $\mathrm{B}_{2p-1}$, 
 that is in the sense that 
 the configuration 
 of the first regular $2n$-gon lives in  the distinct $\mathrm{B}_{2p-1}$ for each $p$. 
 \end{remark}

Consider the projection from $ \mathbb{R}^3 - \{ { 0} \} $ to the $2$-sphere $\mathbb{S}^2$. 
The projective space is called the {\it shape sphere}. 
The image  of ${\bm u}(t) \in  \mathbb{R}^3 - \{ { 0} \}$ under the projection is also denoted by the same notation ${\bm u}(t)$, 
and we call a family $\{{\bm u}(t)\}_t$ (on the shape sphare) the {\it shape curve}. 
Determining the shape curve ${\bm u}(t)$ for $t \in (0,\bar{T})$, 
we obtain the shape curve ${\bm u}(t)$ for all $t \in {\Bbb R}$ from the above symmetries. 
For example, 
we show the shape curves ${\bm u}(t)$ for $t \in {\Bbb R}$ 
when $(n,p)=(3,1)$ and $(n,p)=(4,2)$ in Figure \ref{sphere}.
Each point $\mathrm{B}_i$ in Figure  \ref{sphere} 
indicates the projection of the ray $\mathrm{B}_i$ onto the shape sphere. 
The solid arrows (resp. the dotted arrows) illustrate the shape curve ${\bm u}(t)$ in the front side on the shape sphere, 
i.e., $u_1(t)(= |x_1(t)|^2 - |x_2(t)|^2)>0$, 
(resp. the back side, i.e.,  $u_1(t) <0$). 
The dotted arrow of label $2$ follows from symmetry of the solid arrow of label $1$ with respect to $\mathrm{B}_{1}$.
The remaining cases are treated in the same fashion.

%

\begin{figure}[htbp]
  \begin{minipage}[b]{0.45\linewidth}
\centering
\includegraphics[height=4.4cm]{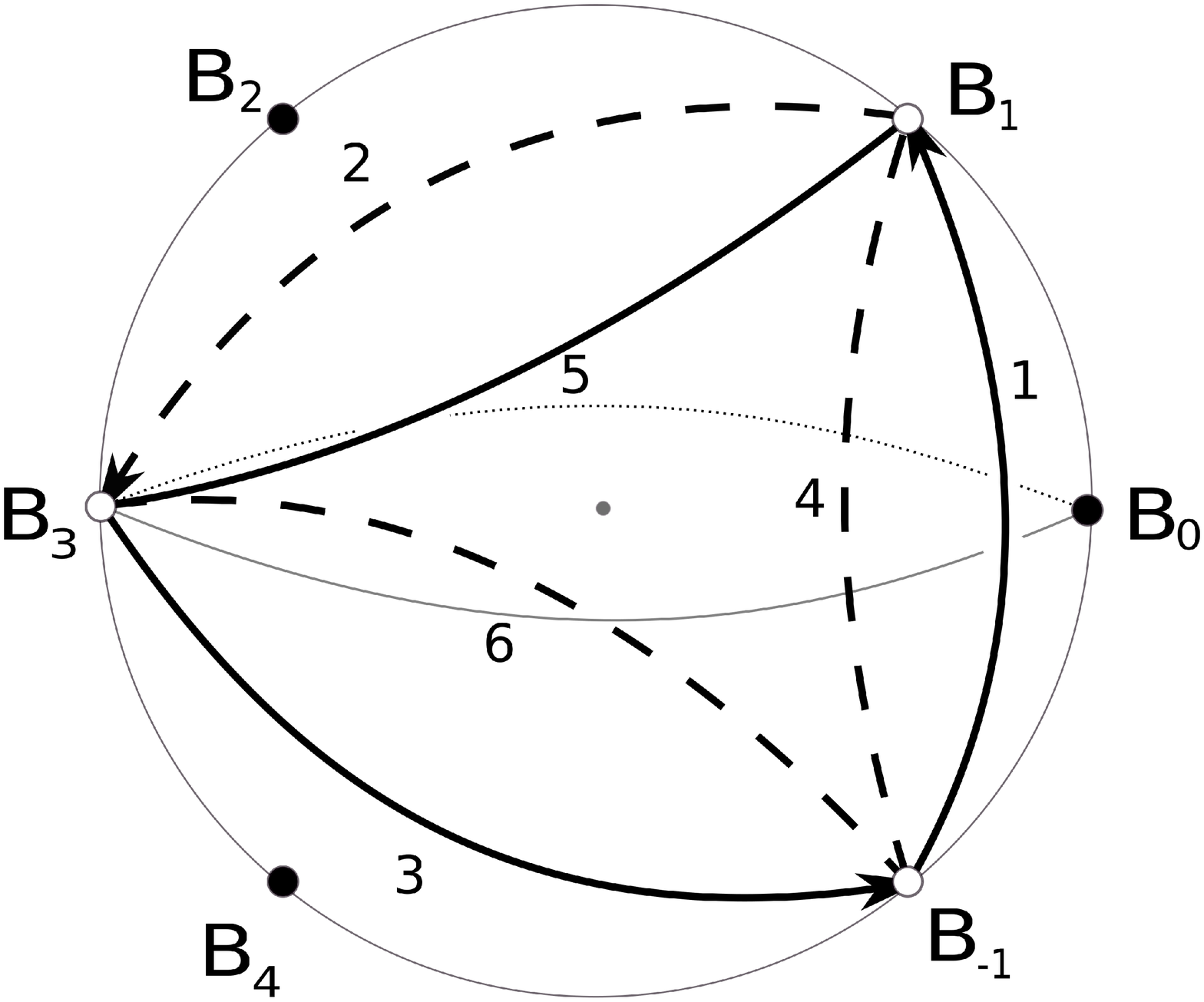}
\subcaption{$(n,p)=(3,1)$}
  \end{minipage}
  \begin{minipage}[b]{0.45\linewidth}
  \centering
\includegraphics[height=4.8cm]{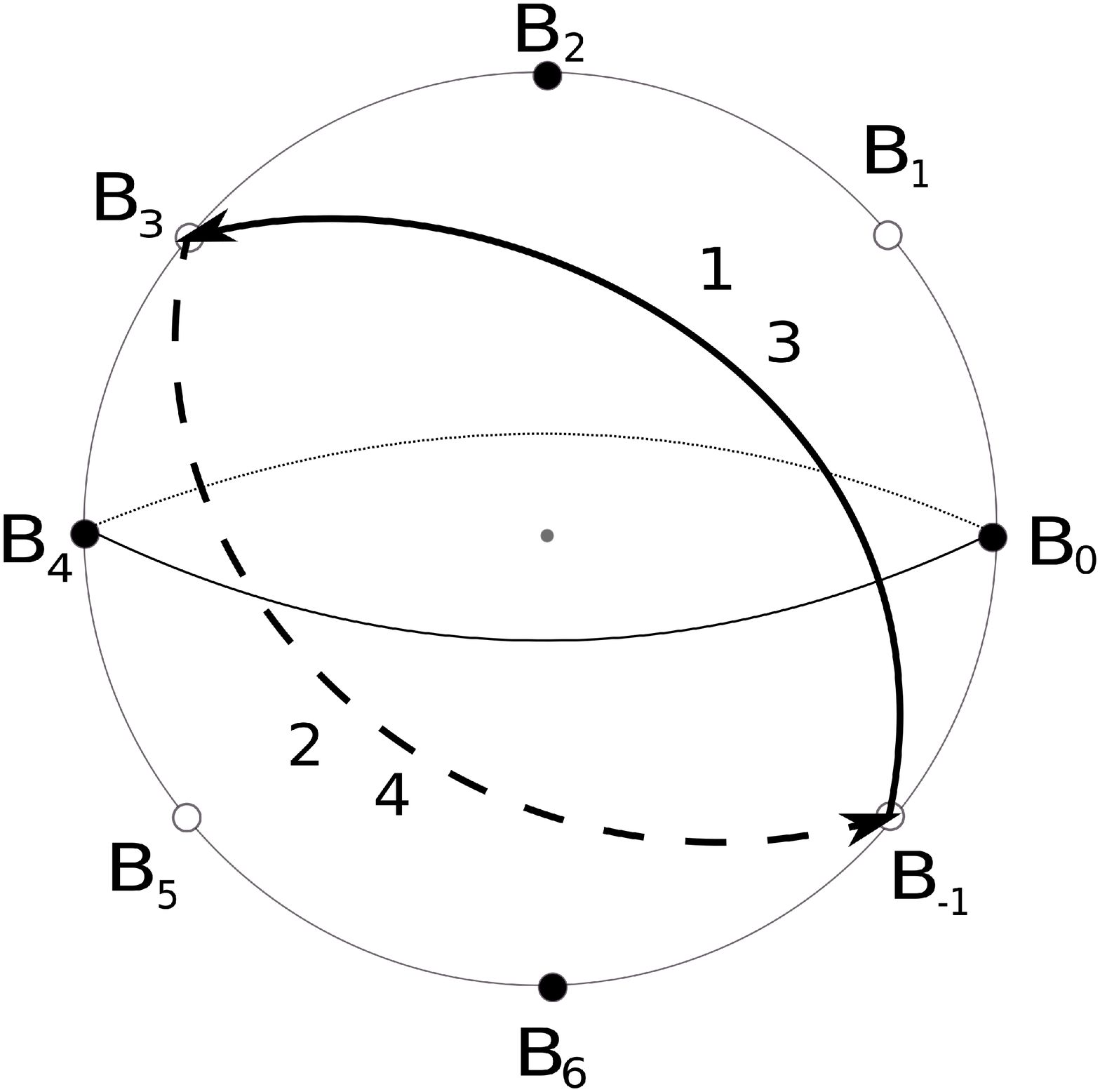}
\subcaption{$(n,p)=(4,2)$}
  \end{minipage}
  \caption{The shape curve ${\bm u}(t)$ for $t \in {\Bbb R}$ 
  when (1) $(n,p)= (3,1)$ and (2) $(n,p)= (4,2)$.  }
\quote{ The point $\mathrm{B}_i$ in the figure is the projection of the ray $\mathrm{B}_i$ onto the shape sphere.}
  \label{sphere}
  \end{figure}

  \begin{remark}
Though the set $\Lambda^G_{n,p}$ does not determine how the shape curve ${\bm u}(t)$ moves on the shape sphere for $t \in (0,\bar{T})$,
we can prove through variational arguments that it does not happen like Figure \ref{error}(1) or (2).
See \cite[Propositions $5$ and $6$]{Shibayama06} for the proof. 
 \end{remark}

 \begin{figure}[htbp]
  \begin{minipage}[b]{0.45\linewidth}
\centering
\includegraphics[height=4.8cm]{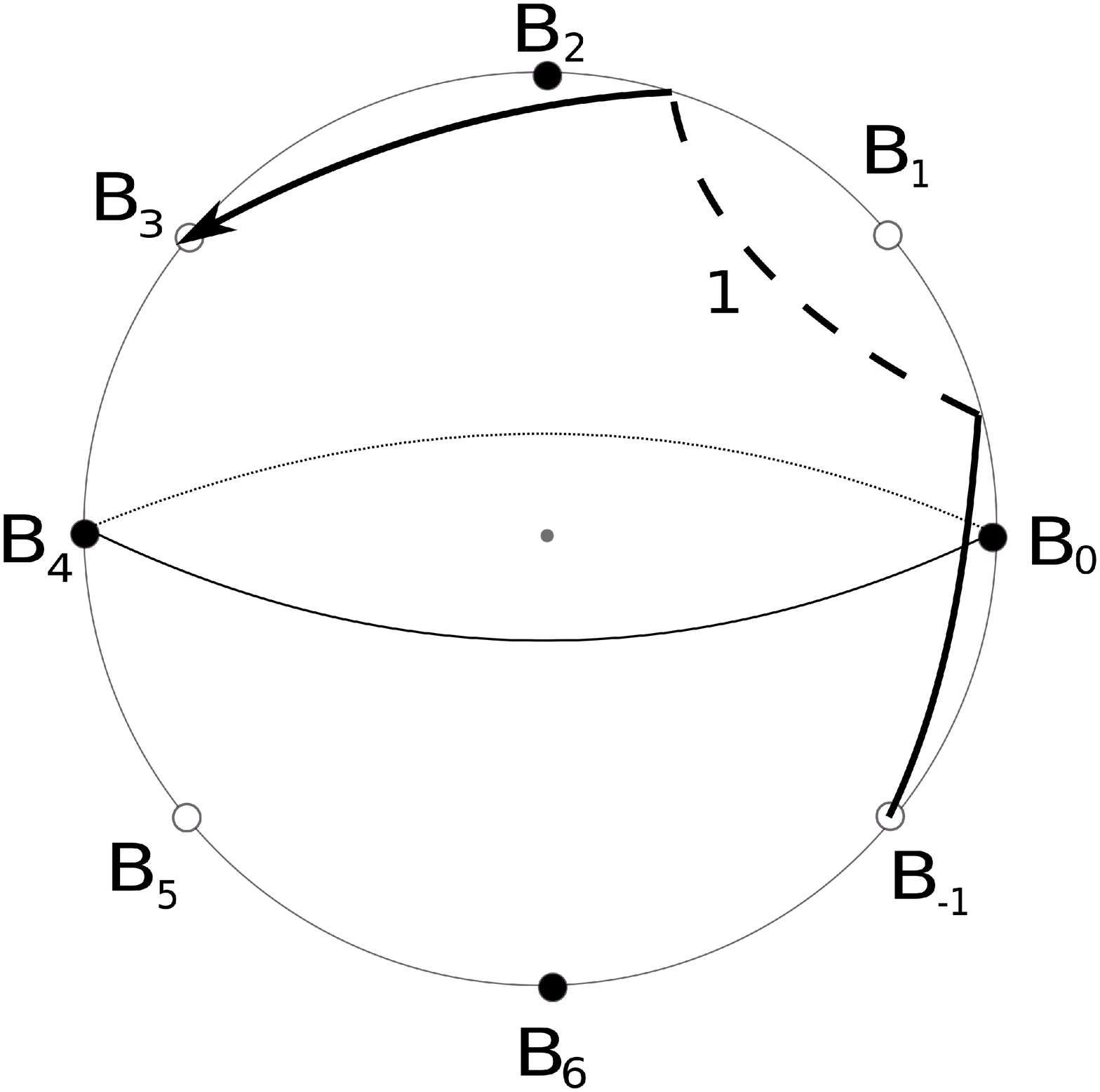}
\subcaption{Error type 1: Across the $u_2u_3$-plane}
  \end{minipage}
  \begin{minipage}[b]{0.45\linewidth}
  \centering
\includegraphics[height=4.8cm]{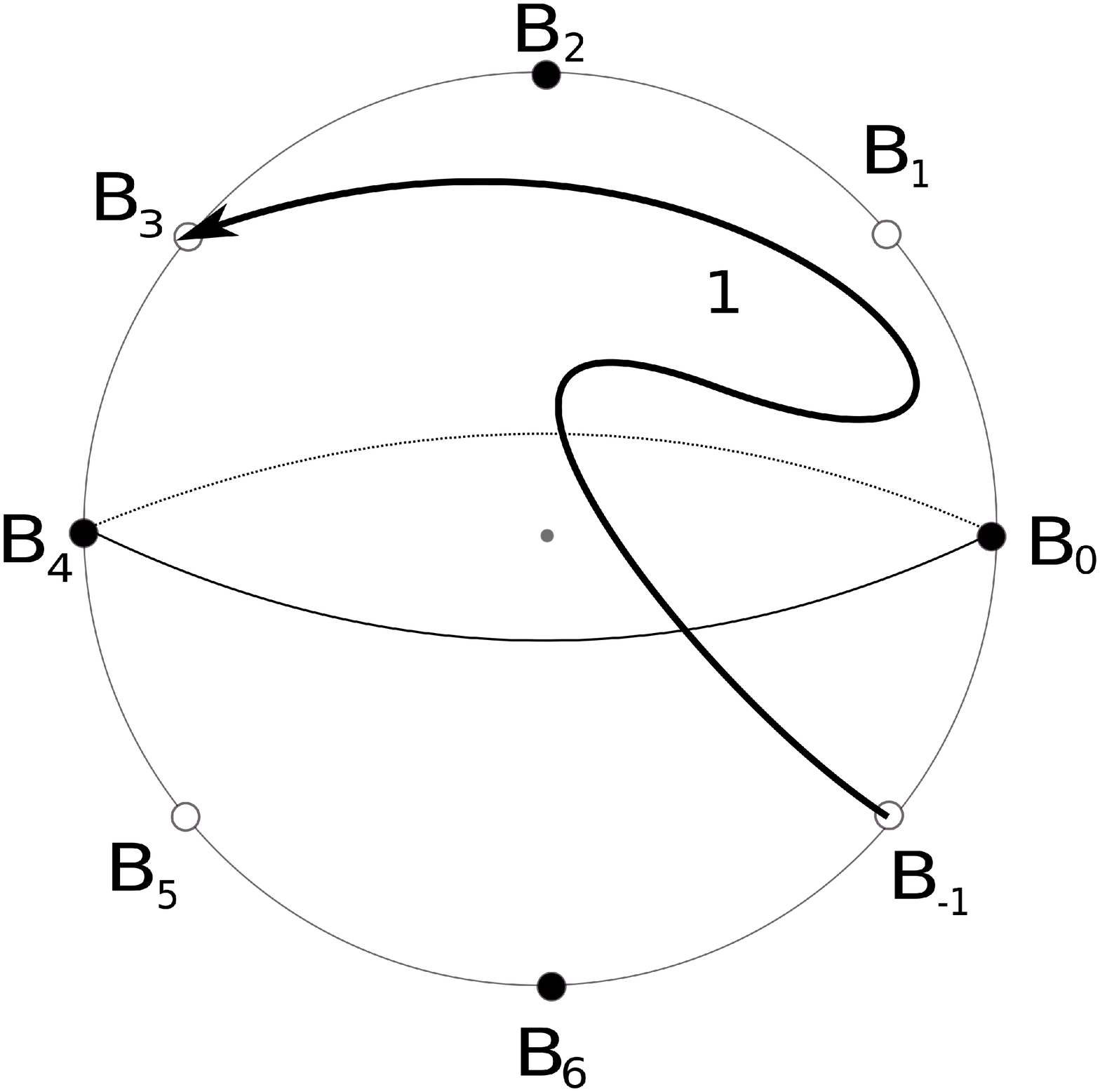}
\subcaption{Error type 2: Non-monotonicity}
  \end{minipage}
  \caption{Error types of the shape curve ${\bm u}(t)$ $(0 \le t \le \bar{T})$ 
  when $(n,p)=(4,2)$. 
  (1) Error type 1: ${\bm u}(t)$ $(0 \le t \le \bar{T})$ is across the $u_2u_3$-plane. 
  (2) Error type 2: ${\bm u}(t)$ is not monotone. 
  For the shape curve ${\bm u}(t)$ $(0 \le t \le \bar{T})$, 
  see the solid arrow with  label $1$ in Figure \ref{sphere}(2).}
  \label{error}
  \end{figure}

Figure \ref{diagram} illustrates 
the projection of the shape curve ${\bm u}(t)$   onto the $u_2u_3$-plane
together with the configuration of $8$ bodies corresponding to  each $\mathrm{B}_{i}$ 
when $(n,p)=(4,1)$ and $(4,2)$.
%

 \begin{figure}[htbp]
  \begin{minipage}[b]{0.99\linewidth}
\centering
\includegraphics[height=8.5cm]{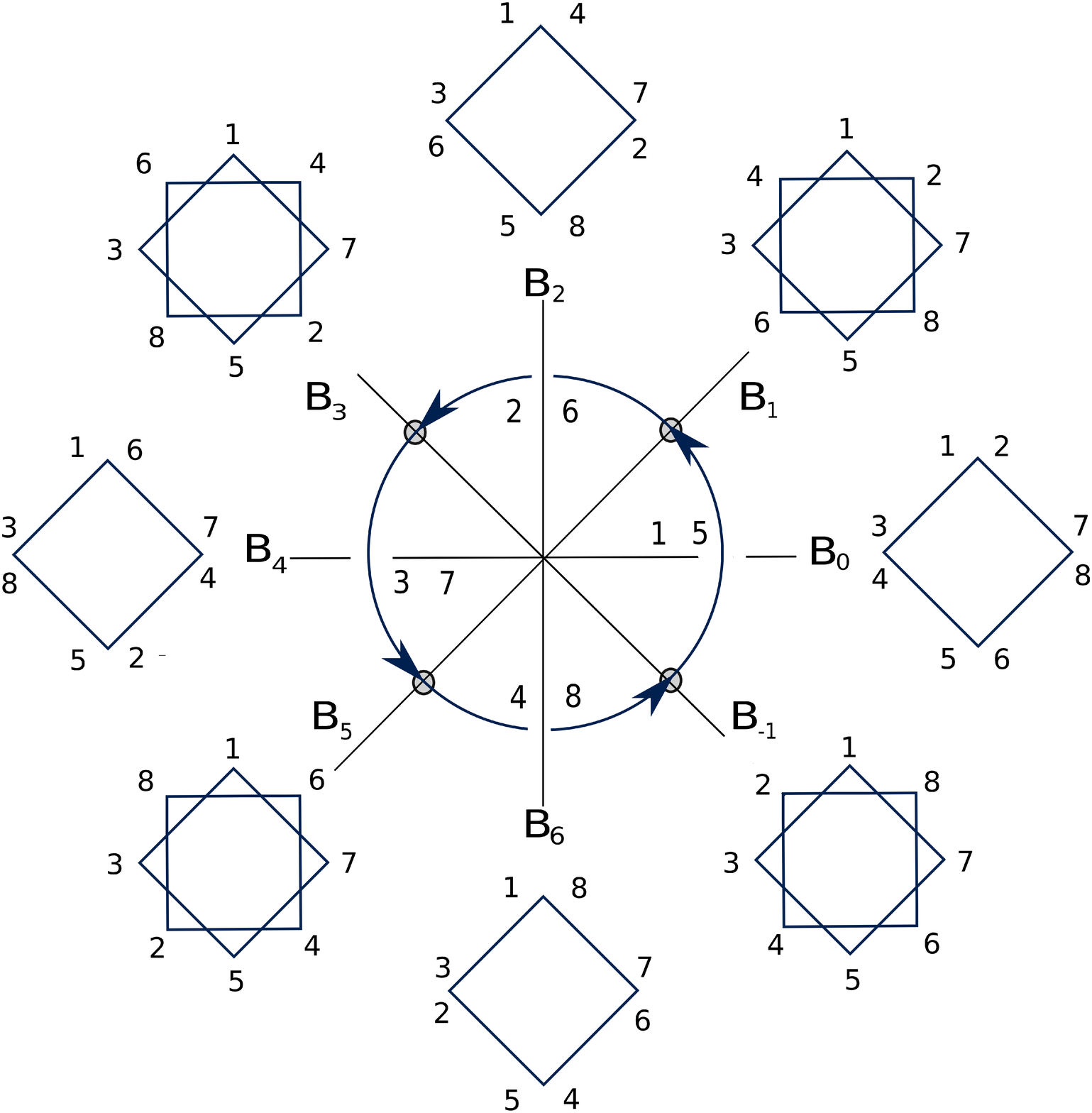}
\subcaption{$(n,p)=(4,1)$}
  \end{minipage}
  \begin{minipage}[b]{0.99\linewidth}
  \centering
\includegraphics[height=8.5cm]{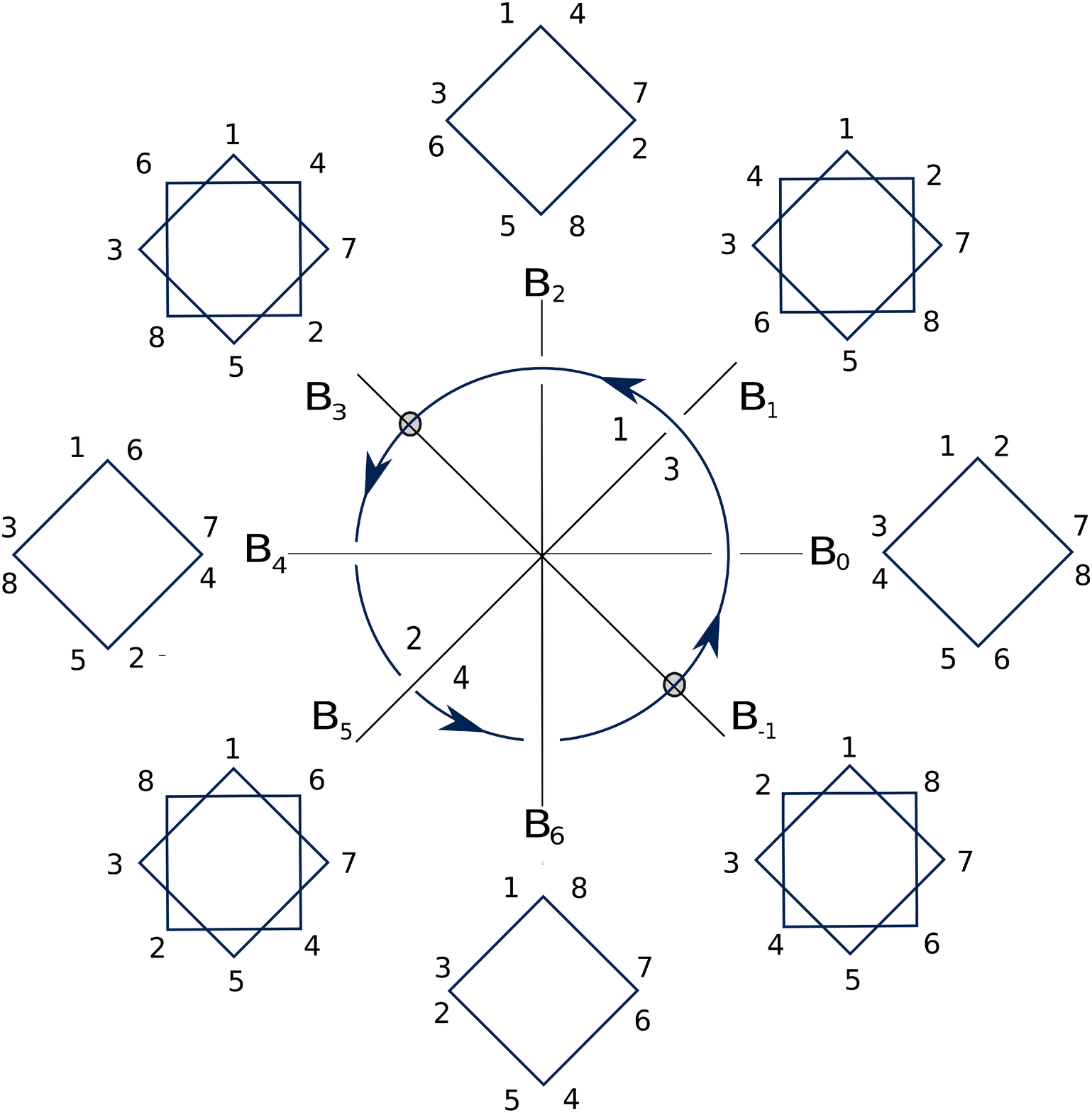}
\subcaption{$(n,p)=(4,2)$}
  \end{minipage}
  \caption{The projection of the shape curve ${\bm u}(t)$ for $t \in {\Bbb R}$ onto the $u_2u_3$-plane 
  when (1) $(n,p)= (4,1)$ and (2) $(n,p)= (4,2)$. 
   The configuration of $8$ bodies corresponding to  $\mathrm{B}_{i}$ is illustrated.}
  \label{diagram}
  \end{figure}

\section{Braid groups and mapping class groups} 
\label{section_braid-groups}

\subsection{Geometric braids} 
\label{subsection_geometric-braids}

\begin{center}
\begin{figure}[t]
\includegraphics[height=3.5cm]{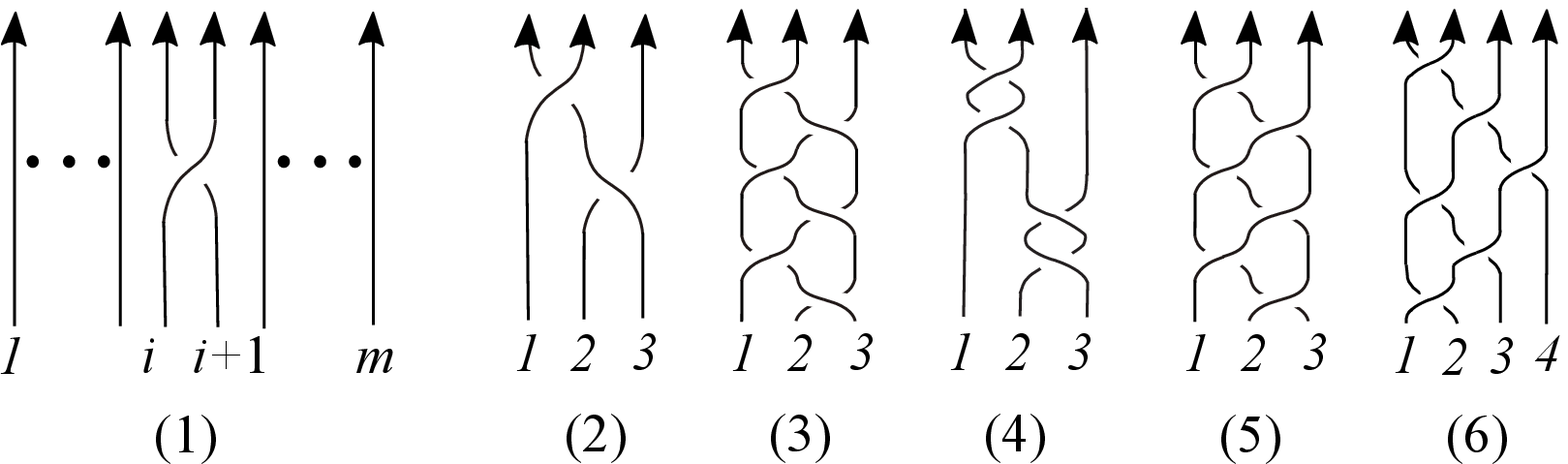}
\caption{(1) $\sigma_i \in B_m$. 
(2) $\sigma_1 \sigma_2^{-1} \in B_3$. 
(3) $(\sigma_1 \sigma_2^{-1})^3 \in P_ 3 < B_3$. 
(4) $ \sigma_1^2 \sigma_2^{-2} \in P_3< B_3$. 
(5) A full twist $\Delta^2 \in P_3< B_3$. 
(6) A half twist $\Delta \in B_4$.}
\label{fig_generator}
\end{figure}
\end{center}

In this section, we recall  definitions of (geometric) braids and the braid types. 
For the basics on braid groups, see Birman \cite{Birman74}. 
Let $D$ be a closed disk in the plane ${\Bbb R}^2$ and 
$Q_m= \{q_1, \dots, q_m\}$ be a set of $m$ points in the interior of $D$. 
Let $\gamma_1, \dots, \gamma_m$ be mutually disjoint $m$ arcs in $D \times [0,1]$ 
with the following properties. 
\begin{itemize}
\item 
$\partial (\gamma_1 \cup \dots \cup \gamma_m) = \{(q_1,t), \ldots, (q_m,t)\ |\ t \in \{0,1\}\} \subset D \times \{0,1\}$,

\item 
$\gamma_i$ ($i= 1, \dots, m$) starts at $(q_i,0)= \gamma_i \cap (D \times \{0\})$ and 
it goes up monotonically with respect to the $[0,1]$-factor. 
In particular 
$\gamma_i \cap (D \times \{t\})$ consists of a single point for $0 \le t \le 1$. 
\end{itemize}
We call $b= \gamma_1 \cup \dots \cup \gamma_m \subset D \times [0,1]$ 
a {\it (geometric) braid} {\it with base points} $Q_m$ and call each $ \gamma_i$ a {\it strand} of the braid $b$. 
We say that braids $b$ and $b'$ with base points $Q_m$ are {\it equivalent} 
if there is a $1$-parameter family of braids  with base points $Q_m$ 
deforming $b$ to $b'$. 
By abuse of notations, the equivalence class $[b]$ is also denote by $b$.

For braids $b$ and $b'$ with base points $Q_m$, 
the product $bb'$ is defined as follows. 
We first stuck $b$ on $b'$ and concatenate them to get disjoint arcs 
properly embedded in $D \times [0,2]$. 
By normalizing its height, 
we obtain a braid (in $D \times [0,1]$) with the same base points $Q_m$ and 
this is the braid $bb'$. 
The set of all braids with base points $Q_m$ 
with this product gives a group structure. 
The group is called the {\it (geometric) braid group} {\it with base points} $Q_m$ 
and it is denoted by $B(Q_m)$. 
Note that the identity element $1_{Q_m} \in B(Q_m)$ is given by a braid  consisting of straight arcs.

Let $A_m = \{a_1, \dots, a_m\}$ be a set of $m$ points in the interior of $D$ 
such that 
$a_1, \ldots, a_m$ lie on a segment in this order. 
We write $B_m= B(A_m)$ and call $B_m$ the {\it $m$-braid group}.
The isomorphism class of the above braid group $B(Q_m)$ with base points $Q_m$ does not depend on the location of base points, 
and $B(Q_m)$ is isomorphic to $B_m$. 
To define braid types of geometric braids with arbitrary base points $Q_m$,  
we now take an isomorphism between $B(Q_m)$ and $B_m$. 
We first choose an orientation preserving homeomorphism 
$f: D \rightarrow D$ such that 
$f(A_m)= Q_m$. 
Then take an isotopy 
$\{f_t\}_{0 \le t \le 1}$ on $D$ between the identity map $\mathrm{id}_{D}$ and $f$, i.e., $f_0= \mathrm{id}_{D}$ and $f_1= f$. 
We consider two kinds of mutually disjoint $m$ arcs 
$\gamma^+$ and $\gamma^{-}$ properly embedded in $D \times [0,1]$ as follows. 
\begin{eqnarray*}
\gamma^+ &=& \bigcup_{t \in [0,1]} \big\{(f_t(a_1), t), \dots, (f_t(a_m), t)\bigr\}, 
\\
\gamma^{-} &=& \bigcup_{t \in [0,1]} \bigl\{(f_{1-t}(a_1), t), \dots, (f_{1-t}(a_m), t)\bigr\}. 
\end{eqnarray*}
Note that 
$Q_m= f_1(A_m) =  \{f(a_1), \dots, f(a_m)\}$ and 
$A_m= f_0(A_m) = \{a_1, \dots, a_m\}$. 
Because of this, it makes sense to stack 
a  braid $b \in B(Q_m)$  on $\gamma^{+}$, and we obtain the resulting disjoint $m$ arcs 
$b \cdot \gamma^+ \subset D \times [0, 2]$.  
Then we stack $\gamma^{-}$ on  $b \cdot \gamma^+ $. 
As a result we have disjoint $m$ arcs 
$$\gamma^{-} \cdot b \cdot \gamma^+  \subset D \times [0,3].$$  
By normalizing the height of  the arcs, 
we obtain a braid (in $D \times [0,1]$) with base points $A_m$, and we still denote it by 
the same notation 
$\gamma^{-} \cdot b \cdot \gamma^+$. 
In particular if $b= 1_{Q_m} \in B(Q_m)$, 
then $\gamma^{-} 1_{Q_m} \gamma^+ = 1_{A_m} \in B_m$. 
The correspondence $b \mapsto \gamma^{-} \cdot b \cdot \gamma^+$ gives us an isomorphism between $B(Q_m)$ and $B_m$.

For an element $b \in B_m$, 
we put indices $1, \dots, m$ at the bottoms of strands  
so that the index $i$ indicates  $(a_i, 0) \in D \times \{0\}$.  
Let $\sigma_i$ be an element of $B_m$  as in Figure \ref{fig_generator}(1).  
The braid group  $B_m$ is  generated by 
$\sigma_1, \sigma_2, \dots, \sigma_{m-1}$, and it has  the following braid relations. 
\begin{enumerate}
\item[(B1)]
$\sigma_i \sigma_j= \sigma_j \sigma_i$  ($|i-j| \ge 2$). 

\item[(B2)] 
$\sigma_i \sigma_{i+1} \sigma_i = \sigma_{i+1} \sigma_i \sigma_{i+1}$  ($1 \le i \le m-2$). 
\end{enumerate}
%
See Figure \ref{fig_generator}(2)--(6) for some braids. 
There is a surjective homomorphism 
$$\hat{\sigma}: B_m \to \mathfrak{S}_m$$ 
from $B_m$ to the symmetry group $\mathfrak{S}_m$ of $m$ elements  
sending each $\sigma_j$ to the transposition $(j, j+1)$. 
The kernel of $\hat{\sigma}$ is called the {\it pure braid group} (or {\it colored braid group}) $P_m < B_m$. 
An element of $P_m$ is called a {\it pure braid}.  
See Figure \ref{fig_generator}(3)(4)(5) for some pure braids.

Let $Z(B_m)$ be the center of $B_m$ 
which is an  infinite cyclic group generated by a {\it full twist} $\Delta^2$, 
where a {\it half twist} 
$\Delta= \Delta_m \in B_m$ is given by 
$$\Delta_m= (\sigma_1 \sigma_2 \dots \sigma_{m-1}) (\sigma_1 \sigma_2 \dots \sigma_{m-2}) \dots (\sigma_1 \sigma_2) \sigma_1.$$ 
See Figure \ref{fig_generator}(6) for a half twist $\Delta \in B_4$. 

Given a braid $b \in B_m$, 
consider the projection $\overline{b}$ in the quotient group 
$$\mathcal{B}_{2n}=B_{2n}/Z(B_{2n}).$$
The {\it braid type} $\langle b \rangle$ of $b$ is  a conjugacy class of $\overline{b}$ in $\mathcal{B}_{2n}$. 

In the case of the braid group $B(Q_m)$ with base points $Q_m$, 
the {\it braid type} $\langle b \rangle$ of $b \in B(Q_m)$  is defined by 
the braid type $\big\langle \gamma^{-} \cdot b \cdot \gamma^+ \big\rangle$ 
of  the braid $\gamma^{-} \cdot b \cdot \gamma^+ \in B_m$ (with base points $A_m$), 
where $\gamma^+$ and $\gamma^-$ are arcs as above. 
The braid type $\langle b \rangle$ is well-defined, i.e., 
it does not depend on the  above orientation preserving homeomorphism 
$f: D \rightarrow D$ and the isotopy $\{f_t\}_{0 \le t \le 1}$.

\begin{ex}
\ 
\begin{enumerate}
\item[(1)]
For the $3$-braid $\sigma_1 \sigma_2^{-1}$, it follows that 
$$\hat{\sigma}(\sigma_1 \sigma_2^{-1}) = \hat{\sigma}(\sigma_1) \hat{\sigma}(\sigma_2^{-1}) = (12)(23) = (123) \in \mathfrak{S}_3,$$ 
see Figure \ref{fig_generator}(2). 
Hence $\hat{\sigma}((\sigma_1 \sigma_2^{-1})^3) = 1 \in \mathfrak{S}_3$ which means that 
$(\sigma_1 \sigma_2^{-1})^3 \in P_3$. 

\item[(2)] 
For the $3$-braid $\sigma_1^2 \sigma_2^{-2}$, it follows that 
$$\hat{\sigma}(\sigma_1^2 \sigma_2^{-2}) = \hat{\sigma}(\sigma_1^2) \hat{\sigma}(\sigma_2^{-2}) = 1 \cdot 1 = 1 \in \mathfrak{S}_3,$$  
see Figure \ref{fig_generator}(4). 
Hence $\sigma_1^2 \sigma_2^{-2}  \in P_3$. 
\end{enumerate}
\end{ex}

\begin{ex}
\label{ex_trivial-braid}
For the Euler's periodic solution of the planar $3$-body problem, 
 three bodies are collinear at every instant. 
A full twist $\Delta^2 = (\sigma_1 \sigma_2 \sigma_1)^2 = (\sigma_1 \sigma_2)^3 \in B_3$ (Figure \ref{fig_generator}(5)) 
represents the braid type of the solution. 
Since $Z(B_3)$ is generated by $\Delta^2$, 
the braid type of the Euler's periodic solution is trivial. 
Similarly, it is the trivial  braid type for the Lagrange's periodic solution of the planar $3$-body problem, 
since the triangle formed by the three bodies is equilateral for all time. 
\end{ex}

\subsection{Mapping class groups}

Let $X_1, \dots, X_n$ be possibly empty subspaces of an orientable manifold $M$. 
For instance 
$M$ is a connected orientable surface $\Sigma_{g,m}$ of genus $g \ge 0$ with $m$ punctures (possibly $m = 0$) 
and 
$X_i$ ($i= 1, \dots, n$) is a finite set in $\Sigma_{g,m}$. 
Let $\mathrm{Homeo}_+ (M, X_1, \dots, X_n)$ be the group of orientation-preserving 
self-homeomorphisms of $M$ that map $X_i$ onto $X_i$ for each $i= 1, \dots, n$. 
We do not require that 
homeomorphisms fix the boundary $\partial M$ pointwise. 
The mapping class group 
$\mathrm{MCG}(M, X_1, \dots, X_n)$ is defined by 
$$\mathrm{MCG}(M, X_1, \dots, X_n)= \pi_0(\mathrm{Homeo}_+ (M, X_1, \dots, X_n)),$$
that is the group of isotopy classes of elements of $\mathrm{Homeo}_+ (M, X_1, \dots, X_n)$. 
When $X$ is an empty subspace of $M$, then we write 
$\mathrm{MCG}(M)= \mathrm{MCG}(M,X)$. 
We apply elements of mapping class groups from right to left, i.e., 
we apply $g$ first for the product $fg$.

\begin{center}
\begin{figure}[t]
\includegraphics[height=3cm]{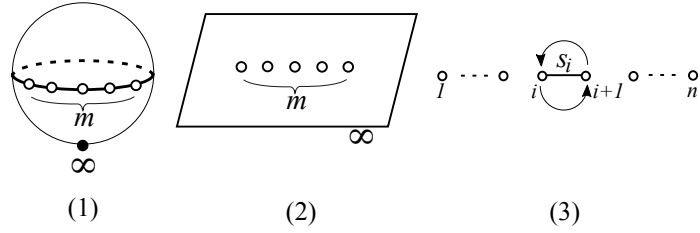}
\caption{
(1) A pair $(\Sigma_{0,m}, \{\infty\})$.  
(2) An $m$-punctured plane. 
(3) A half twist $h_i$.}
\label{fig_plane}
\end{figure}
\end{center}

Let $D_m= D \setminus A_m$ be an $m$-punctured disk, 
where $A_m = \{a_1, \dots, a_m\}$ is the set of $m$ points in the interior of $D$ 
as in Section \ref{subsection_geometric-braids}. 
By definition, 
$\mathrm{MCG}(D_m) $ is the group of isotopy classes of elements of $\mathrm{Homeo}_+ (D_m)$ 
which fix $\partial D$ setwise. 
In this paper, we mainly consider an $m$-punctured disk $D_m$ or 
an $m$-punctured sphere $\Sigma_{0,m}$ 
as an orientable manifold $M$ for the  mapping class groups. 
We take a point in $\Sigma_{0,m}$ and call it  $\infty$. 
An element $f \in \mathrm{Homeo}_+(\Sigma_{0,m}, \{\infty\}) $ means that $f$ fixes the point $\infty$. 
Puncturing the point $\infty$, 
we think of $\mathrm{MCG}(\Sigma_{0,m}, \{\infty\})$ 
as a subgroup of $\mathrm{MCG}(\Sigma_{0,m+1})$. 
Also we may regard $\mathrm{MCG}(\Sigma_{0,m}, \{\infty\})$ as the mapping class group of an $m$-punctured plane. 
See Figure \ref{fig_plane}(1)(2).

The mapping class group $\mathrm{MCG}(D_m)$ is generated by 
$h_1, \dots, h_{m-1}$, 
where $h_i $ is the right-handed {\it half twist} about a segment $s_i$ connecting the $i$th and $(i+1)$th punctures, 
see Figure \ref{fig_plane}(3). 
More precisely, 
let ${\Bbb D}_i \subset \mathrm{int}(D)$ be a closed disk such that 
${\Bbb D}_i$ contains  the two points $a_i$ and $a_ {i+1}$ together with a segment $s_i$ between the punctures 
$a_i$ and $a_{i+1}$. 
Moreover ${\Bbb D}_i$ contains no other points of $A_m$. 
Then the right-handed  half-twist $h_i \in  \mathrm{MCG}(D_m)$ 
is a mapping class that fixes the exterior of ${\Bbb D}_i$ and 
rotates  $s_i$ in ${\Bbb D}_i$ by $\pi$ in the counter-clockwise direction. 
Hence $h_i$ interchanges the $i$th puncture  with the $(i+1)$th puncture.

We now recall a relation between $B_ m$ and $\mathrm{MCG}(D_m)$. 
There is a surjective homomorphism 
$$\Gamma: B_m \rightarrow \mathrm{MCG}(D_m)$$  
which sends $\sigma_i$ to $h_i$ for $i = 1, \dots, m-1$. 
The kernel of $\Gamma$ is the center $Z(B_m)$, and hence 
$\mathcal{B}_m = B_m / Z(B_m)$ is isomorphic to  $\mathrm{MCG}(D_m)$.  
Collapsing $\partial D$ to the point $\infty$ in the sphere, 
we  have a homomorphism 
$$\mathfrak{c}: \mathrm{MCG}(D_n) \rightarrow \mathrm{MCG}(\Sigma_{0, m}, \{\infty\}).$$ 
By abuse of notations, we simply denote by $b $, 
the mapping class 
$\mathfrak{c}(\Gamma(b)) \in \mathrm{MCG}(\Sigma_{0,m}, \{\infty\})$. 
Also we denote by $\langle b \rangle$, 
the conjugacy class $\big\langle \mathfrak{c} (\Gamma(b)) \big\rangle$ of 
$\mathfrak{c}(\Gamma(b)) \in \mathrm{MCG}(\Sigma_{0,m}, \{\infty\})$. 
Note that this notation $\langle b \rangle$ is the same as the braid type of $b \in B_m$.

\subsection{Nielsen-Thurston classification}
\label{subsection_Nielsen-Thurston}

According to the Nielsen-Thurston classification \cite{Thurston88}, 
mapping classes fall into three types: periodic, reducible and pseudo-Anosov. 
Assume that $3g-3+m \ge 1$. 
A mapping class $\phi \in \mathrm{MCG}(\Sigma_{g,m})$ is {\it periodic}  
if $\phi$ is of finite order. 
A mapping class 
$\phi \in \mathrm{MCG}(\Sigma_{g,m})$ is {\it reducible} 
if there is a  collection of mutually disjoint and non-homotopic essential simple closed curves 
$C_1, \ldots, C_j$ in $\Sigma_{g,m}$ for $j \ge 1$ such that 
$C_1 \cup \dots \cup C_j$ is preserved by $\phi$. 
Here a simple closed curve $C$ in $\Sigma_{g,m}$ is {\it essential} 
if each component of $\Sigma_{g,m} \setminus C$ has negative Euler characteristic.  
(There is a mapping class that is periodic and reducible.)  
A mapping class $\phi \in \mathrm{MCG}(\Sigma_{g,m})$ is {\it pseudo-Anosov} 
if $\phi$ is neither periodic nor reducible.  
Note that the Nielsen-Thurston type is a conjugacy invariant, 
i.e., 
two mapping classes are conjugate to each other in $\mathrm{MCG}(\Sigma_{g,m})$, 
then their Nielsen-Thurston types are the same.

Pseudo-Anosov mapping classes have many important properties for the study of dynamical systems. 
For more details which we describe below, see \cite{FLP,FarbMargalit12}. 
A homeomorphism $\Phi: \Sigma_{g,m} \rightarrow \Sigma_{g,m}$ is {\it pseudo-Anosov} 
if there exist a constant $\lambda= \lambda(\Phi)>1$ and 
a pair of transverse measured foliations 
$(\mathcal{F}^+, \mu^+)$ and 
$(\mathcal{F}^-, \mu^-)$  such that 
$$\Phi((\mathcal{F}^+,  \mu^+)) = (\mathcal{F}^+, \lambda \mu^+)\ \hspace{2mm} and \hspace{2mm}
 \Phi((\mathcal{F}^-,  \mu^-)) = (\mathcal{F}^-, \tfrac{1}{\lambda} \mu^-).$$ 
 This means that 
 $\Phi$ preserves both foliations $\mathcal{F}^+$ and $\mathcal{F}^-$, 
 and it contracts the leaves of $\mathcal{F}^-$ by $\frac{1}{\lambda}$ and 
 it expands the leaves of $\mathcal{F}^+$ by $\lambda$. 
The invariant foliations  $\mathcal{F}^+$ and $\mathcal{F}^-$ 
are called the {\it unstable and stable foliations} for $\Phi$, and $\lambda>1$ is called the {\it stretch factor} for $\Phi$. 

\begin{remark}
\label{rem_singularities}
The invariant foliations $\mathcal{F}^+$ and $\mathcal{F}^-$ for the pseudo-Anosov homeomorphism $\Phi$  
are singular foliations 
which mean that 
they have common singularities in the interior of $\Sigma_ {g,m}$ or at punctures of $\Sigma_ {g,m}$. 
The number of singularities is finite. 
A $1$-pronged singularity may occur at a puncture of $\Sigma_{g,m}$, yet 
there are no $1$-pronged singularities in the interior of $\Sigma_ {g,m}$. 
\end{remark}

Each pseudo-Anosov mapping class $\phi \in  \mathrm{MCG}(\Sigma_{g,m})$ 
contains a pseudo-Anosov homeomorphism $\Phi$ as a representative of $\phi$.  
We set $\lambda(\phi)= \lambda(\Phi) $ and call it the {\it stretch factor} of the mapping class $\phi = [\Phi]$.  
The stretch factor $\lambda(\phi)$ is a conjugacy invariant of pseudo-Anosov mapping classes. 
Moreover $ \lambda(\phi)$ is the largest eigenvalue of a Perron-Frobenius integral matrix. 
Thus $\lambda(\phi)$ is an algebraic integer which is a real number grater than $1$ 
and $|\lambda'| < \lambda(\phi)$ holds for each conjugate element $\lambda' \ne \lambda(\phi)$. 
The logarithm  $\log (\lambda(\phi))$ of the stretch factor $\lambda(\phi)$ is called the {\it entropy} of $\phi$.

\begin{remark}
\label{remark_power}
If $\phi \in \mathrm{MCG}(\Sigma_{g,m})$ is pseudo-Anosov, then $\phi^k$ is pseudo-Anosov for all $k \ge 1$ and 
the equality $\lambda(\phi^k)= (\lambda(\phi))^k$ holds. 
\end{remark}

Recall the  two homomorphisms 
$\Gamma: B_m \rightarrow \mathrm{MCG}(D_m)$ and 
$\mathfrak{c}: \mathrm{MCG}(D_m) \rightarrow \mathrm{MCG}(\Sigma_{0, m}, \{\infty\}) < \mathrm{MCG}(\Sigma_{0, m+1})$.  
We say that a braid $b \in B_m$ is {\it periodic} (resp. {\it reducible}, {\it pseudo-Anosov}) 
if the mapping class $\mathfrak{c}(\Gamma(b))$ is of the corresponding type. 
When $b$ is a pseudo-Anosov braid, 
the {\it stretch factor} $\lambda(b)$ of $b$ is defined by the stretch factor $\lambda(\mathfrak{c}(\Gamma(b)))$ 
of the mapping class $\mathfrak{c}(\Gamma(b))$. 
In this case, it makes sense to say that the braid type $\langle b \rangle$ is {\it pseudo-Anosov}, 
and we can define the {\it stretch factor}  
$\lambda(\langle b \rangle)$ of the braid type $\langle b \rangle$  by 
\begin{equation}
\label{equation_braid-type}
\lambda(\langle b \rangle)= \lambda(b) = \lambda(\mathfrak{c}(\Gamma(b))),
\end{equation}
since both Nielsen-Thurston type and the stretch factor are conjugacy invariants.

\subsection{Pseudo-Anosov $3$-braids}
\label{subsection_3braids}

It is well-known that 
for positive integers $k_j$'s, $\ell_j$'s and $r$,  
the $3$-braid 
$$\sigma_1^{k_1} \sigma_2^{-\ell_1} \dots \sigma_1^{k_r} \sigma_2^{-\ell_r}$$ 
is pseudo-Anosov. 
Moreover any pseudo-Anosov $3$-braid $\alpha$ 
is conjugate to a braid 
$\sigma_1^{k_1} \sigma_2^{-\ell_1} \dots \sigma_1^{k_r} \sigma_2^{-\ell_r}$ in $B_3$ 
which is unique up to a cyclic permutation. 
See Murasugi \cite{Murasugi74} for example. 
Then the stretch factor $\lambda(\alpha)$ is 
the eigenvalue greater than $1$ of

\begin{equation}
\label{equation_eigenvalue}
M_{(k_1, \ell_1, \ldots, k_r, \ell_r)}=  
\left(
\begin{matrix}
  1&1\\
  0&1
\end{matrix}
\right)^{k_1} 
\left(
\begin{matrix}
  1&0\\
  1&1
\end{matrix}
\right)^{\ell_1}  
\dots 
\left(
\begin{matrix}
  1&1\\
  0&1
\end{matrix}
\right)^{k_r} 
\left(
\begin{matrix}
  1&0\\
  1&1
\end{matrix}
\right)^{\ell_r}. 
\end{equation}
See  Handel \cite{Handel97} for example.

\begin{ex}[Metallic $3$-braids (Appendix A in \cite{FinnThiffeault11})]
\label{ex_metallic-3braid}
For $p \ge 1$, 
the $3$-braid $ \sigma_1^{2p} \sigma_2^{-2p}$  is pseudo-Anosov, and 
the stretch factor $\lambda(\sigma_1^{2p} \sigma_2^{-2p})$ is the  eigenvalue greater than $1$ of 
$M_{(2p, 2p)} = 
\left(
\begin{matrix}
  1+ 4p^2&2p\\
  2p&1
\end{matrix}
\right).$
Thus 
$$\lambda( \sigma_1^{2p} \sigma_2^{-2p})  = (p+ \sqrt{p^2+1})^2 = \Bigl( \frac{1}{2} (2p+ \sqrt{4p^2+4}) \Bigr)^2
= (\mathfrak{s}_{2p})^2.$$
\end{ex}

\
\begin{center}
\begin{figure}[t]
\includegraphics[height=5cm]{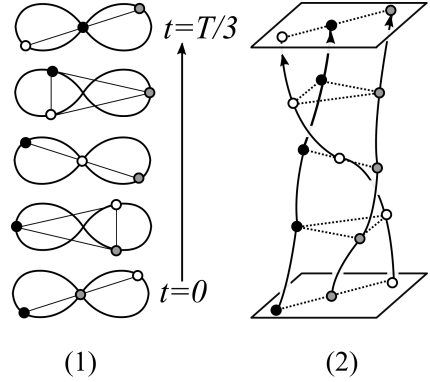}
\caption{(1) The figure-$8$ solution ${\bm x}(t)$ with period $T$. 
(2) A representative braid $\sigma_1^{-1} \sigma_2 \in \big\langle  b({\bm x}(t), [0, \frac{T}{3}])\big\rangle$.}
\label{fig_figure8}
\end{figure}
\end{center}

\begin{ex}
\label{ex_figure-eight}
Let us consider the figure-$8$ solution ${\bm x}(t)= (x_1(t), x_2(t), x_3(t))$  
 by Moore \cite{Moore93} and Chenciner-Montgomery \cite{ChencinerMontgomery00}, 
 see Figure \ref{fig_figure8}. 
The periodic solution ${\bm x}(t)$ has a property such that 
$$ x_1(t+ \tfrac{T}{3}) = x_2(t), \hspace{2mm} 
x_2(t+ \tfrac{T}{3}) = x_3(t), \hspace{2mm} 
x_3(t+ \tfrac{T}{3}) = x_1(t),$$
where $T>0$ is the period of ${\bm x}(t)$. 
This property tells us that 
${\bm x}(t)$ determines a braid $b({\bm x}(t), [0, \frac{T}{3}])$. 
One sees that $\sigma_1^{-1} \sigma_2 \in B_3$ is a representative of  $\big\langle  b({\bm x}(t), [0, \frac{T}{3}])\big\rangle$ and 
$(\sigma_1^{-1} \sigma_2)^3 $ represents the braid type $\big\langle  b({\bm x}(t), [0, T])\big\rangle$ of the solution ${\bm x}(t)$. 
It is easy to see that 
$\sigma_1^{-1} \sigma_2$ is conjugate with $\sigma_1 \sigma_2^{-1}$ in $B_3$. 
By \eqref{equation_eigenvalue}, $\sigma_1 \sigma_2^{-1}$ is a pseudo-Anosov braid 
with the stretch factor $(\mathfrak{s}_1)^2$. 
Thus the braid type of the figure-$8$ solution is pseudo-Anosov with the stretch factor $(\mathfrak{s}_1)^6$ 
(Remark \ref{remark_power}), 
and hence it is a non-trivial braid type in contrast with the Euler's solution and Lagrange's solution (Example \ref{ex_trivial-braid}). 
\end{ex}

\section{Proof of Theorem \ref{thm_braidtype}} 
\label{section_proof-of-theorem}

For $n \ge 2$ and $p \ge 1$, we define  braids $u_n, v_n, \beta_{n,p} \in B_ {2n}$ as follows. 
\begin{eqnarray*}
u_{n} &=&
(\sigma_1 \sigma_2 \dots \sigma_{2n-1}) (\sigma_1 \sigma_3 \dots \sigma_{2n-1})^{-1}, 
\\
v_{n}&=& (\sigma_1 \sigma_2 \dots \sigma_{2n-1})^{-1}
(\sigma_1 \sigma_3 \dots \sigma_{2n-1}) \ \mbox{and}\ 
\\
\beta_{n,p}&=& u_{n}^p v_{n}^p.
\end{eqnarray*}
See also Figure \ref{fig_uv} together with the braid relation (B1) in Section \ref{subsection_geometric-braids}. 
It is easy to check that 
$\hat{\sigma}(u_n)= (1,3,\dots, 2n-1)$ and 
$\hat{\sigma}(v_n)= (2,4, \dots, 2n)^{-1}$. 
Hence by (\ref{equation_permutation}), we have 
\begin{equation}
\label{equation_permutation-b}
\hat{\sigma}(\beta_{n,p}) = \hat{\sigma}(u_n^p v_n^p) = (1,3,\dots, 2n-1)^p (2,4, \dots, 2n)^{-p} = 
\sigma(h_{n,p}^{-1}). 
\end{equation}

\begin{center}
\begin{figure}[t]
\includegraphics[height=6cm]{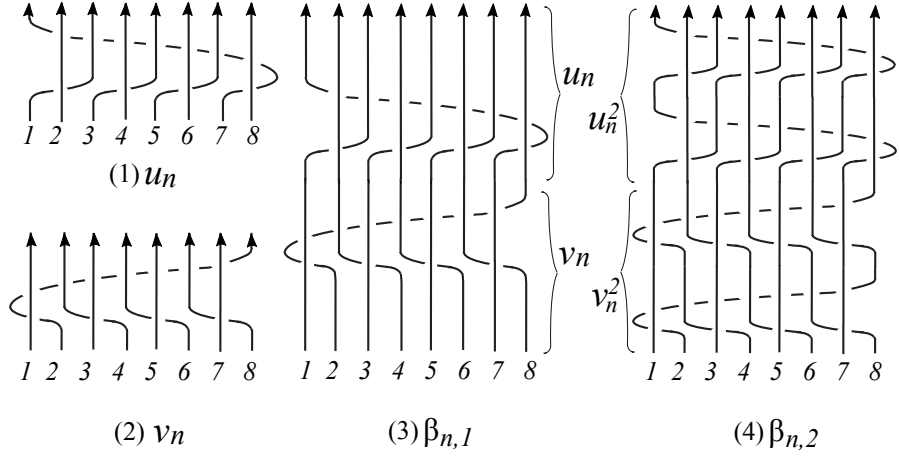}
\caption{
Case $n=4$. 
(1) $u_n$. 
(2) $v_n$. 
(3) $\beta_{n,1}= u_n v_n$. 
(4) $\beta_{n,2} = u_n^2 v_n^2 $.} 
\label{fig_uv}
\end{figure}
\end{center}

\begin{proof}[Proof of Theorem \ref{thm_braidtype}] 
The proof consists of the following two steps. 
In Step 1, we prove that 
for  $n \ge 2$ and {\it any} $p \ge 1$, 
the braid $\beta_{n,p}$ is pseudo-Anosov 
with $ \lambda(\beta_{n,p})=(\mathfrak{s}_{2p})^{2}$. 
(We have no restriction on $p$ in Step 1.) 
In Step 2, 
we prove that 
for any $n \ge 2$ and $p \in \{1, \dots, \lfloor  \frac{n}{2}\rfloor \}$, 
the braid types of $\beta_{n,p}$ and $y_{n,p} = b({\bm x}_{n,p}(t), [0, \frac{d}{n}])$ are the same. 
In other words, $\beta_{n,p} \in \langle y_{n,p} \rangle$. 
Since $X_{n,p}= \big\langle (y_{n,p})^{\frac{n}{d} } \big\rangle$, it follows that 
$X_{n,p}= \big\langle (\beta_{n,p})^{\frac{n}{d} } \big\rangle$. 
Hence by Step 1 together with Remark \ref{remark_power},  
$X_{n,p}$ is a pseudo-Anosov braid type with the stretch factor 
$$\lambda(X_{n,p})= \lambda \bigr((\beta_{n,p}) ^{\frac{n}{d}} \bigl) = 
(\lambda(\beta_{n,p}))^{\frac{n}{d} } = (\mathfrak{s}_{2p})^{\frac{2n}{d}}.$$

\noindent
{\bf Step 1.}
For $n \ge 2$ and $p \ge 1$, 
the braid $\beta_{n,p}$ is pseudo-Anosov 
with $ \lambda(\beta_{n,p})=(\mathfrak{s}_{2p})^{2}$. 
In particular 
$\lambda(\beta_{n,p}) < \lambda(\beta_{n, p'})$ if $p < p'$.

\begin{center}
\begin{figure}[t]
\includegraphics[height=6cm]{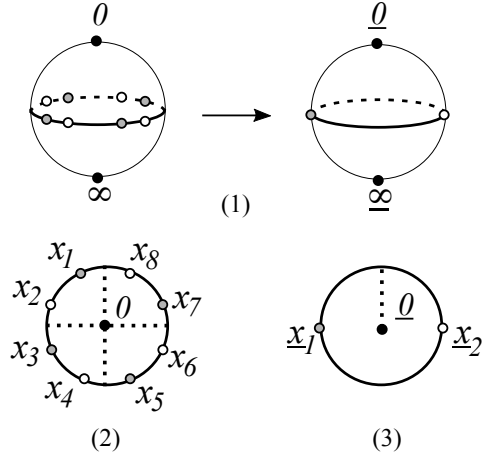}
\caption{Case $n=4$. 
(1) An $n$-fold branched cover 
$\mathfrak{p}: \Sigma_{0,2n} \rightarrow \Sigma_{0,2}$ with branched points $\underline{0}$ and $\underline{\infty}$, 
where punctures   lie on the equators. 
(2) The upper hemisphere for $\Sigma_{0,2n}$. 
(3) The upper hemisphere for $\Sigma_{0,2}$. } 
\label{fig_cover}
\end{figure}
\end{center}

\begin{center}
\begin{figure}[t]
\includegraphics[height=8cm]{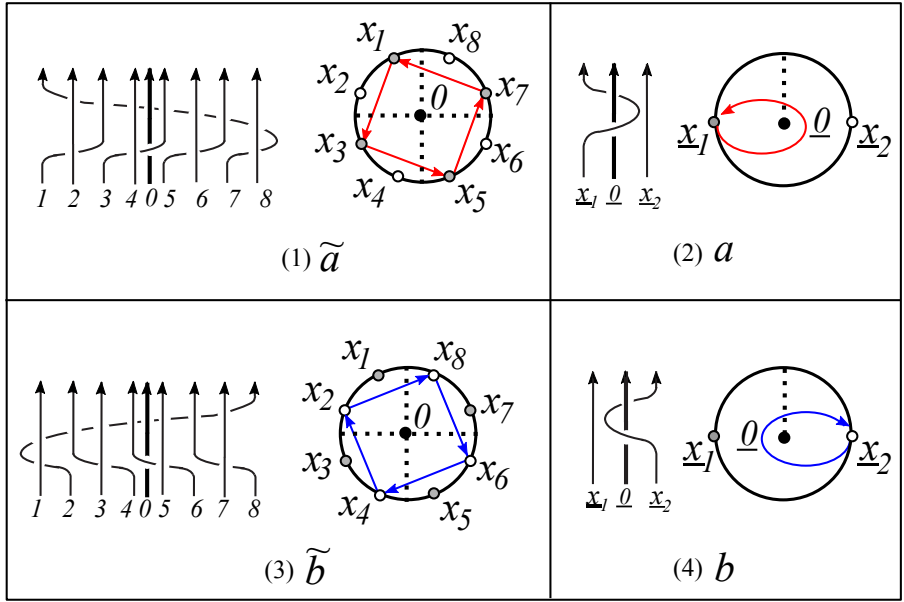}
\caption{
Case $n=4$. 
(1) A lift $\widetilde{a}= \widetilde{a_{n}} \in \mathrm{MCG}(\Sigma_{0,2n}, \{0\}, \{\infty\})$ of 
(2) $a = \sigma_1^2 \in \mathrm{MCG}(\Sigma_{0,2}, \{\underline{0}\}, \{\underline{\infty}\})$. 
(3) A lift $\widetilde{b}= \widetilde{b_{n}} \in  \mathrm{MCG}(\Sigma_{0,2n}, \{0\}, \{\infty\})$ of 
(4) $b= \sigma_2^{-2}  \in \mathrm{MCG}(\Sigma_{0,2}, \{\underline{0}\}, \{\underline{\infty}\})$. 
For (2) and (4), 
$a$ and $b$ (as elements of $B_3$) have base points 
$\underline{x}_1$, $\underline{0}$, and $\underline{x}_2$.} 
\label{fig_11-2-2}
\end{figure}
\end{center}

\begin{center}
\begin{figure}[t]
\includegraphics[height=3.5cm]{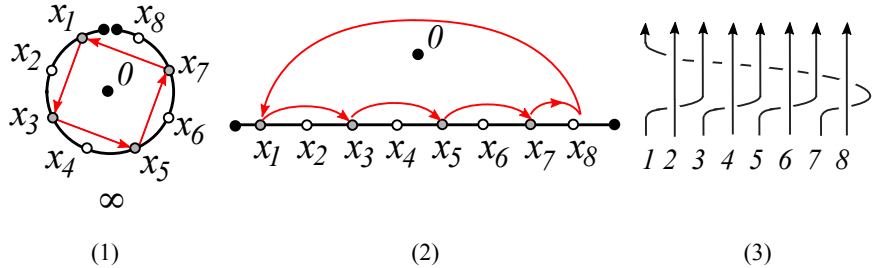}
\caption{
Case $n=4$. 
(1) An arc in the equator. 
(2) A segment in the plane. 
(3) The braid $u_n$  corresponding to $\widetilde{a}^{\bullet} \in \mathrm{MCG}(\Sigma_{0,2n}, \{\infty\})$. 
(The arc in (1) is identified with the segment in (2).) 
For (1) and (2), arrows indicate the image of the punctures under $\widetilde{a}$.} 
\label{fig_equator}
\end{figure}
\end{center}

\noindent
Proof of Step 1. 
We consider a $2$-punctured sphere $\Sigma_{0,2}$ and 
denote the two punctures of $\Sigma_{0,2}$ 
by $\underline{x}_1$ and $\underline{x}_2$. 
We pick two points in $\Sigma_{0,2}$ and call them 
$\underline{0}$ (the north pole) and $\underline{\infty}$ (the south pole). 
Given $n \ge 2$, we take an $n$-fold branched cover 
$$\mathfrak{p}: \Sigma_{0,2n} \rightarrow \Sigma_{0,2}$$ 
with branched points $\underline{0}$ and $\underline{\infty}$. 
(We cut a longitude of $\Sigma_ {0,2}$ between $\underline{0}$ and $\underline{\infty}$, 
take $n$ copies of the resulting surface, and past them to make an $2n$-punctured sphere.) 
We denote 
lifts of  $\underline{0}, \underline{\infty} \in \Sigma_{0,2}$ 
by $0, \infty \in \Sigma_{0,2n}$ respectively.
Let $x_1, x_2, \dots, x_{2n}$ be punctures of $\Sigma_{0,2n}$ 
such that 
$\mathfrak{p}$ sends $x_{2k}$ (resp. $x_{2k-1}$) to $\underline{x}_2$ (resp. $\underline{x}_1$).  
In the view from $0 \in \Sigma_{0,2n}$ in the upper hemisphere,  
we may assume that 
$x_1, \dots, x_{2n}$ lie on the equator counterclockwise and 
these $2n$ punctures form the regular $2n$-gon. 
See Figure \ref{fig_cover}.

Let $a= \sigma_1^2$, $b= \sigma_2^{-2} \in B_3$. 
Since $a$ and $b$ are pure $3$-braids,  
we can regard $a$ and $b$ as elements of $\mathrm{MCG}(\Sigma_{0,2}, \{\underline{0}\}, \{\underline{\infty}\})$, 
see Figure \ref{fig_11-2-2}(2)(4). 
We lift $a$ and $b$ to $\Sigma_{0, 2n}$, 
and call them 
$$\widetilde{a}, \widetilde{b} \in \mathrm{MCG}(\Sigma_{0,2n}, \{0\}, \{\infty\}) 
< \mathrm{MCG}(\Sigma_{0,2n+1}, \{\infty\}).$$
(Clearly  both $\widetilde{a}$ and $\widetilde{b}$ fix the two points $0$ and $\infty$.)  
We have 
\begin{eqnarray*}
\widetilde{a}(x_{2k-1}) &=& x_{2k+1}\hspace{2mm} \mbox{and}\hspace{2mm}\  
\widetilde{a}(x_{2k}) = x_{2k} \hspace{2mm} 
\mbox{for}\  k= 1, \dots, n, 
\\
\widetilde{b}(x_{2k-1}) &=& x_{2k-1}\hspace{2mm} \mbox{and}\hspace{2mm}\  
\widetilde{b}(x_{2k}) = x_{2k-2} \hspace{2mm} 
\mbox{for}\  k= 1, \dots, n, 
\end{eqnarray*}
where we interpret indices modulo $2n$. 
Notice that 
$\widetilde{a}$ rotates the regular $n$-gon $x_1 x_3 \dots x_ {2n-1}$ 
 by $\frac{\pi}{n}$ counterclockwise about the north pole $0$; 
$\widetilde{b}$ rotates the regular $n$-gon $x_2 x_4 \dots x_ {2n}$ 
 by $\frac{\pi}{n}$ clockwise about the same point $0$, 
see Figure \ref{fig_11-2-2}(1)(3). 
In other words, under the action of $\widetilde{a}$, 
each puncture $x_{2i-1}$ $(i= 1, \dots, 2n)$ with odd index 
is passing through in front of the puncture $x_{2i}$ with even index 
from the view of the north pole $0 \in \Sigma_ {0,2n}$. 
Similarly, under the action of $\widetilde{b}$, 
each puncture $x_{2i}$ $(i= 1, \dots, 2n)$ with even index 
is passing through in front of the puncture $x_{2i-1}$ with odd index.

Forgetting the point $0 \in \Sigma_{0, 2n}$, 
we think of $\widetilde{a}$ and $\widetilde{b}$ as elements, say 
$\widetilde{a}^{\bullet}$ and $\widetilde{b}^{\bullet} $
of $\mathrm{MCG}(\Sigma_{0,2n}, \{\infty\})$ 
respectively. 
To find the planar $2n$-braids for  $\widetilde{a}^{\bullet}$ and $\widetilde{b}^{\bullet}$, 
we cut the equator of $\Sigma_ {0, 2n}$ at a point between the consecutive punctures 
$x_{2n}$ and $x_1$ (in the cyclic order) into an arc, and 
we regard the arc as  a segment  in the plane  
containing the punctures $x_1, \dots, x_{2n}$ in this order, 
see Figure \ref{fig_equator}(1)(2). 
Then from the actions of $\widetilde{a}^{\bullet}$ and $\widetilde{b}^{\bullet}$ on $2n$ punctures in the plane, 
one sees that $2n$-braids 
corresponding to $\widetilde{a}^{\bullet}$, $\widetilde{b}^{\bullet}$ 
are given by $u_n$, $v_n \in B_{2n}$ respectively. 
See Figure \ref{fig_equator}(3). 
(Although we do not need braid representatives corresponding to $\widetilde{a}$ and $\widetilde{b}$ in the proof of Step 1, 
Figure \ref{fig_11-2-2}(1) and (3) illustrate these representatives for $\widetilde{a}$ and $\widetilde{b}$ 
respectively in case $n=4$.)

We define 
$$\phi_p=(\widetilde{a})^p (\widetilde{b})^p \in \mathrm{MCG}(\Sigma_{0,2n}, \{0\}, \{\infty\}) 
< \mathrm{MCG}(\Sigma_{0,2n+1}, \{\infty\}) .$$
It follows that 
$\phi_p$ is a lift of $a^p b^p = \sigma_1^{2p} \sigma_2^{-2p}  \in \mathrm{MCG(\Sigma_{0,2}, \{\underline{0}\}, \{\underline{\infty}\})}$. 
Recall that $a^p b^p $ is  a pseudo-Anosov mapping class  with the stretch factor $(\mathfrak{s}_{2p})^2$, 
see Example \ref{ex_metallic-3braid}. 
Since $\phi_p$ is a lift of $a^p b^p$, 
$\phi_p$ is also pseudo-Anosov with the same stretch factor as $a^p b^p$. 
Hence $\lambda(\phi_p) = (\mathfrak{s}_p)^2$. 

Forgetting the point $0 \in \Sigma_{0, 2n}$, we obtain 
$\phi_p^{\bullet} \in \mathrm{MCG}(\Sigma_{0, 2n}, \{\infty\})$ 
from $\phi_p = (\widetilde{a})^p (\widetilde{b})^p$. 
Note that 
$\phi_p^{\bullet} = (\widetilde{a}^{\bullet})^p (\widetilde{b}^{\bullet})^p $ 
is a mapping class corresponding to the braid $ \beta_{n,p}= u_{n}^p v_{n}^p$. 
\medskip

\noindent
{\bf Claim.} 
The stable/unstable foliation $\mathcal{F}^{+/-}$ of $\phi_p$ is not $1$-pronged at $0 \in \Sigma_{0, 2n}$. 
\medskip

For the proof of Step 1,
it is enough  to prove  Claim. 
The reason is that 
if $\mathcal{F}^{+/-}$ is not $1$-pronged at the point $0 \in \Sigma_ {0,2n}$, 
then the same singular foliations $\mathcal{F}^{+}$ and $\mathcal{F}^{-}$ are still invariant  foliations for $\phi_p^{\bullet}$, 
see Remark \ref{rem_singularities}. 
This implies that 
$\phi_p^{\bullet}$ (and hence the braid $\beta_{n,p}$) is pseudo-Anosov 
with the same stretch factor $(\mathfrak{s}_p)^2$ as   $\phi_p$, 
i.e., 
$$\lambda(\beta_{n,p})\Bigl(= \lambda(\phi_p^{\bullet})\Bigr)= \lambda(\phi_p) =  (\mathfrak{s}_{2p})^2.$$

\noindent
{\it Proof of Claim.} 
Let us consider the stable/unstable foliation $\underline{\mathcal{F}}^+$ and $\underline{\mathcal{F}}^-$ 
for the pseudo-Anosov element $a^p b^p$. 
Then 
$\underline{\mathcal{F}}^{+/-}$ has $1$-pronged singularities at each of the two punctures of $\Sigma_{0,2}$ 
and at each of the two points 
$\underline{0}$ and $\underline{\infty}$. 
Let $\mathcal{F}^{+}$ and $\mathcal{F}^-$ denote lifts of $\underline{\mathcal{F}}^+$ and $\underline{\mathcal{F}}^-$ respectively.  
It follows that $\mathcal{F}^{+/-}$ is the stable/unstable foliation for $\phi_p$, and 
$\mathcal{F}^{+/-}$ has a $1$-pronged singularity at each of the $2n$ punctures and 
$\mathcal{F}^{+/-}$ has $n$-pronged singularities $(n \ge 2)$ at  
the points $0$ and $\infty$ in $\Sigma_{0, 2n}$. 
In particular $\mathcal{F}^{+/-}$ is not  $1$-pronged at $0 \in \Sigma_{0, 2n}$. 
This completes the proof of Claim. 
\medskip

\noindent
By Claim,  we finished the proof of Step 1. 
\medskip

Recall that $y_{n,p} = b({\bm x}_{n,p}(t), [0, 2 \bar{T} ])$ and $\bar{T}=\frac{dT}{2n}$. 
\medskip

\noindent
{\bf Step 2.} 
$\beta_{n,p} \in \langle y_{n,p} \rangle$. 
In particular $(\beta_{n,p})^{\frac{n}{d} } \in X_{n,p}= \big\langle (y_{n,p})^{\frac{n}{d} } \big\rangle$. 
\medskip

\noindent
Proof of Step 2. 
Let us consider the shape curve ${\bm u}(t)$ ($t \in [0, 2 \bar{T}]$)
for the solution ${\bm x}_{n,p}(t)$. 
By the arguments in Section \ref{subsection_shapesphare}, 
the shape curve ${\bm u}(t)$ ($t \in [0, 2 \bar{T}]$)  
satisfies the following properties.

\begin{enumerate}
\item[(s1)] 
${\bm u}(0) \in \mathrm{B}_{-1}$, ${\bm u}(\bar{T}) \in \mathrm{B}_{2p-1}$ and ${\bm u}(2 \bar{T}) \in \mathrm{B}_{4p-1}$. 

\item[(s2)] 
$u_1(t)>0$ for $0 < t < \bar{T}$.

\item[(s3)] 
 $u_1(t)<0$ 
for $ \bar{T} < t < 2 \bar{T} $.

\item[(s4)] 
$x_i(2 \bar{T} ) = x_{\sigma(h_{n,p}^{-1})(i)} (0)$ for $i= 1, \dots, 2n$. 
\end{enumerate}

Recall that 
  $n$ bodies with odd indices  and $n$ bodies with even  indices rotate in mutually opposite directions. 
The above (s1) (${\bm u}(0) \in \mathrm{B}_{-1}$, ${\bm u}(\bar{T}) \in \mathrm{B}_{2p-1}$) 
and  (s2) tell us that 
each of bodies $x_{2i}(t)$'s ($i= 1, \dots, n$) 
with even indices is passing through in front of bodies  
 with odd indices  (in the time interval $(0, \bar{T})$) 
from the view of the origin $0 \in {\Bbb R}^2$. 
 Similarly (s1) (${\bm u}(\bar{T}) \in \mathrm{B}_{2p-1}$, ${\bm u}(2 \bar{T}) \in \mathrm{B}_{4p-1}$) and  (s3) imply that 
 each of bodies $x_{2i-1}(t)$'s ($i = 1, \dots, n$) 
 with odd indices is passing through in front of the bodies  
with even indices  (in the time interval $(\bar{T}, 2 \bar{T}$)). 
 These properties connect up   $(\widetilde{b})^p \in \mathrm{MCG}(\Sigma_{0, 2n}, \{0\}, \{\infty\})$ 
 with ${\bm u}(t)$  for $t \in [0, \bar{T} ]$ 
 (resp. $(\widetilde{a})^p \in \mathrm{MCG}(\Sigma_{0, 2n}, \{0\}, \{\infty\})$ 
 with ${\bm u}(t)$  for $t \in  [\bar{T}, 2 \bar{T}]$), 
  see Figure \ref{fig_11-2-2}(3)(4). 
Recall that 
 the permutation $\hat{\sigma}(\beta_{n,p})$ of the braid $\beta_{n,p}= u_n^p v_n^p$ 
 coincides with  $ \sigma(h_{n,p}^{-1})$, 
 see (\ref{equation_permutation-b}). 
 Putting these properties together with  (s4), we conclude that  the $2n$-braid $u_n^p v_n^p (= \beta_{n,p}) $ 
is a representative of the braid type $\langle  y_{n,p} \rangle$ of $y_{n,p}= b({\bm x}_{n,p}(t), [0, 2 \bar{T}])$. 
This completes the proof of Step 2. 
 \medskip
 
 By Steps 1 and 2, we have finished the proof of Theorem \ref{thm_braidtype}. 
\end{proof}

 We end this section with an example. 

\begin{ex}
\label{example_occasionally}
Corollary \ref{cor_prime} and Table \ref{table_entropy} may suggest that 
$\lambda(X_{n,p})$ does not coincide with $\lambda(X_{n,p'})$ for different pairs 
$(n,p) \ne (n,p')$. 
 However,  
 the stretch factors happen to coincide for different pairs  occasionally: 
 The $k$th metallic ratio $\mathfrak{s}_k$ has a   formula 
$(\mathfrak{s}_k)^3 = \mathfrak{s}_{k^3+ 3k}$ for each $k \in {\Bbb N}$. 
In particular $(\mathfrak{s}_6)^3= \mathfrak{s}_{234}$ when $k=6$. 
We now claim that 
$ \lambda(X_{n,3})= \lambda(X_{n, 117})$ for  all $n= 3^2  2^k$ with $k \ge 5$. 
Then $117 \le \lfloor \frac{n}{2}  \rfloor$. 
By Theorem \ref{thm_stretch-factor}, 
we have 
$ \lambda(X_{n,3})= (\mathfrak{s}_{6})^{\frac{2n}{3}}= (\mathfrak{s}_{6})^{3 \cdot 2^{k+1}}$ and 
$\lambda(X_{n, 117}) = (\mathfrak{s}_{234})^{\frac{2n}{9}} =  (\mathfrak{s}_{234})^{2^{k+1}}$. 
By the equality $(\mathfrak{s}_6)^3= \mathfrak{s}_{234}$, we have 
$$ \lambda(X_{n,3})=  ((\mathfrak{s}_6)^3)^{2^{k+1}}= (\mathfrak{s}_{234})^{2^{k+1}} = \lambda(X_{n, 117}).$$
\end{ex}


\section{New numerical periodic solutions of the $2n$-body problem}
\label{section_numerical}

We numerically found the periodic solutions $\bm{x}_{n,p}(t)$ for $p= 1, \dots, \lfloor \frac{n}{2}  \rfloor$ in Figure  \ref{fig_orbit}.
In order to obtain those, 
we consider the Fourier series of the solutions and compute the Fourier coefficient by using the steepest descent method.
Though the existence of the periodic orbits theoretically guarantees for $p = 1, \dots,  \lfloor \frac{n}{2}  \rfloor$, 
new numerical solutions are  obtained for 
several pairs with $(n,p)$ with $  \lfloor \frac{n}{2}  \rfloor < p< n$. 
See Figure \ref{fig_num_orb}.
Then it is natural to ask the following question. 

\begin{ques}
\label{ques_last}
For $n \ge 2$ and $ \lfloor \frac{n}{2}  \rfloor < p< n$, 
does there exist a periodic solution ${\bm x}_{n,p}(t)$ of the planar $2n$-body problem 
whose braid type $X_{n,p}$ is given by the braid $(\beta_ {n,p})^{\frac{n}{d}}$ with 
 $d= \gcd(n,p)$?  
\end{ques}
If the answer of Question \ref{ques_last} is positive, then 
Theorem \ref{thm_stretch-factor} is extended  to  some pairs $(n,p)$ with  $\lfloor \frac{n}{2}  \rfloor < p< n$, i.e. 
if the answer of Question \ref{ques_last} is positive, then 
the braid type $X_ {n,p}$ of the periodic solution ${\bm x}_{n,p}(t)$ of the planar $2n$-body problem 
is pseudo-Anosov with the stretch factor 
$(\mathfrak{s}_{2p})^{\frac{2n}{d}}$. 
See also Step 1 of the proof of Theorem \ref{thm_braidtype}.

\begin{figure}[h]
\begin{center}

\includegraphics[width=1.8cm]{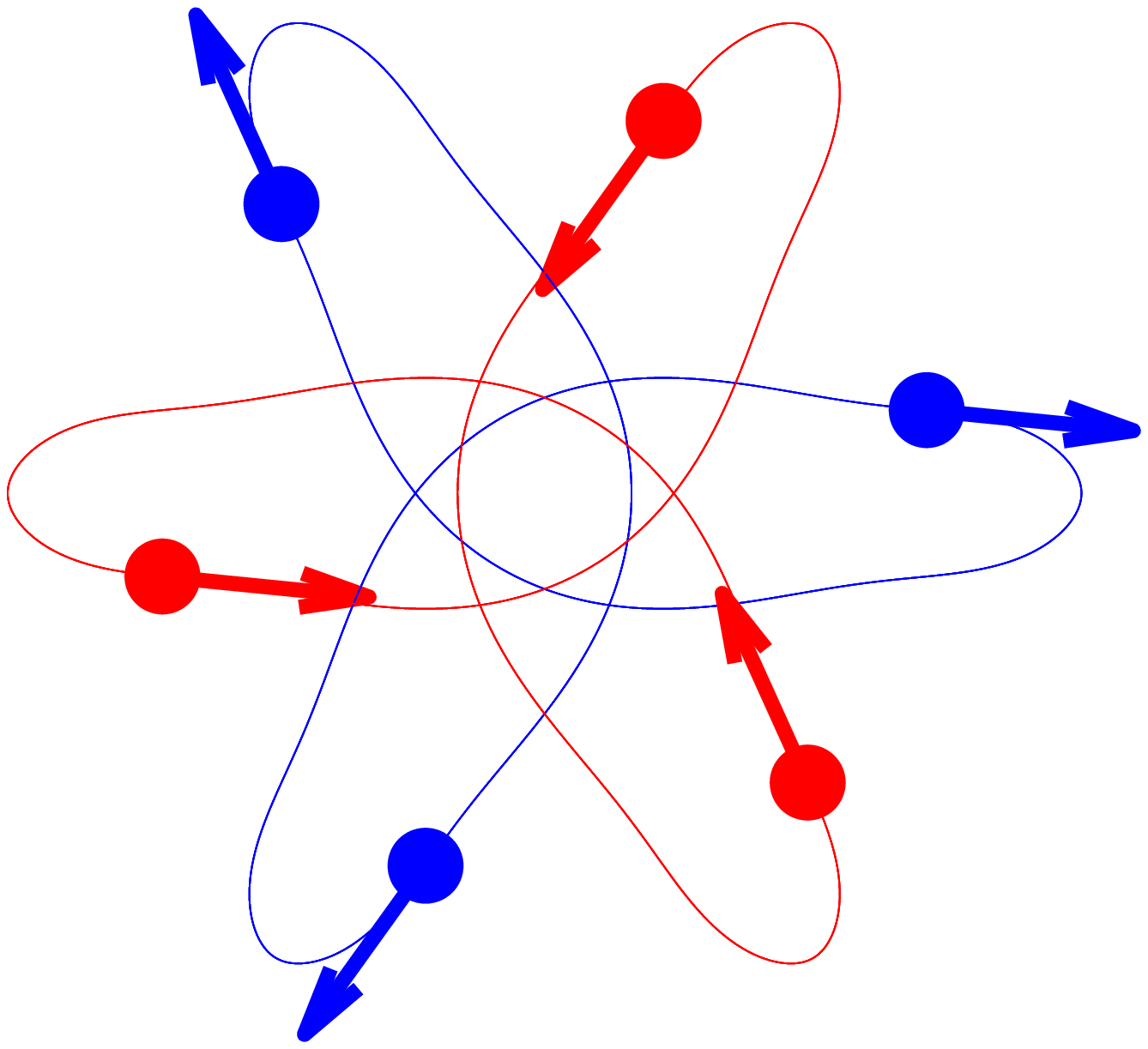}\hspace{0.5cm}
\includegraphics[width=1.8cm]{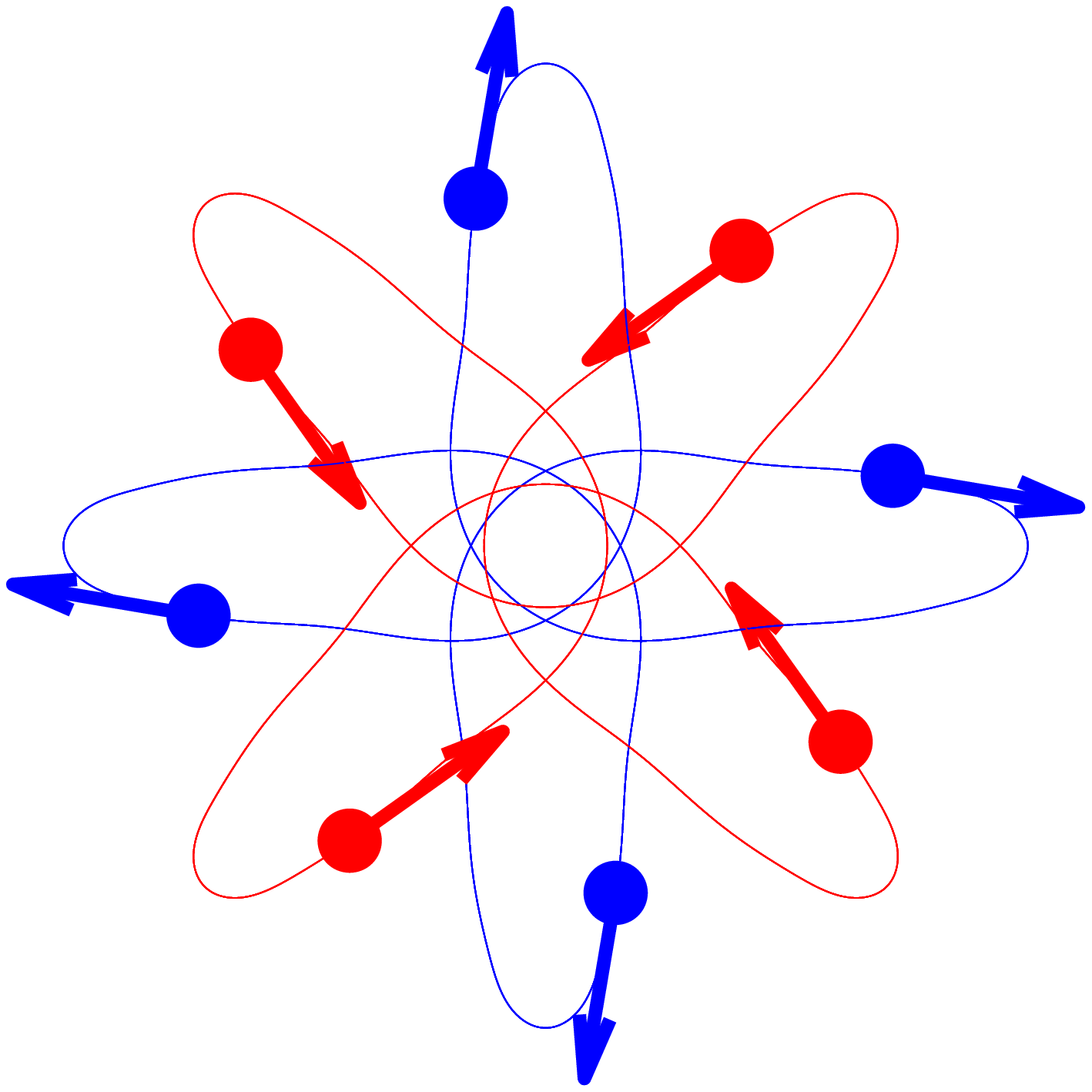}\hspace{0.5cm}
\includegraphics[width=1.8cm]{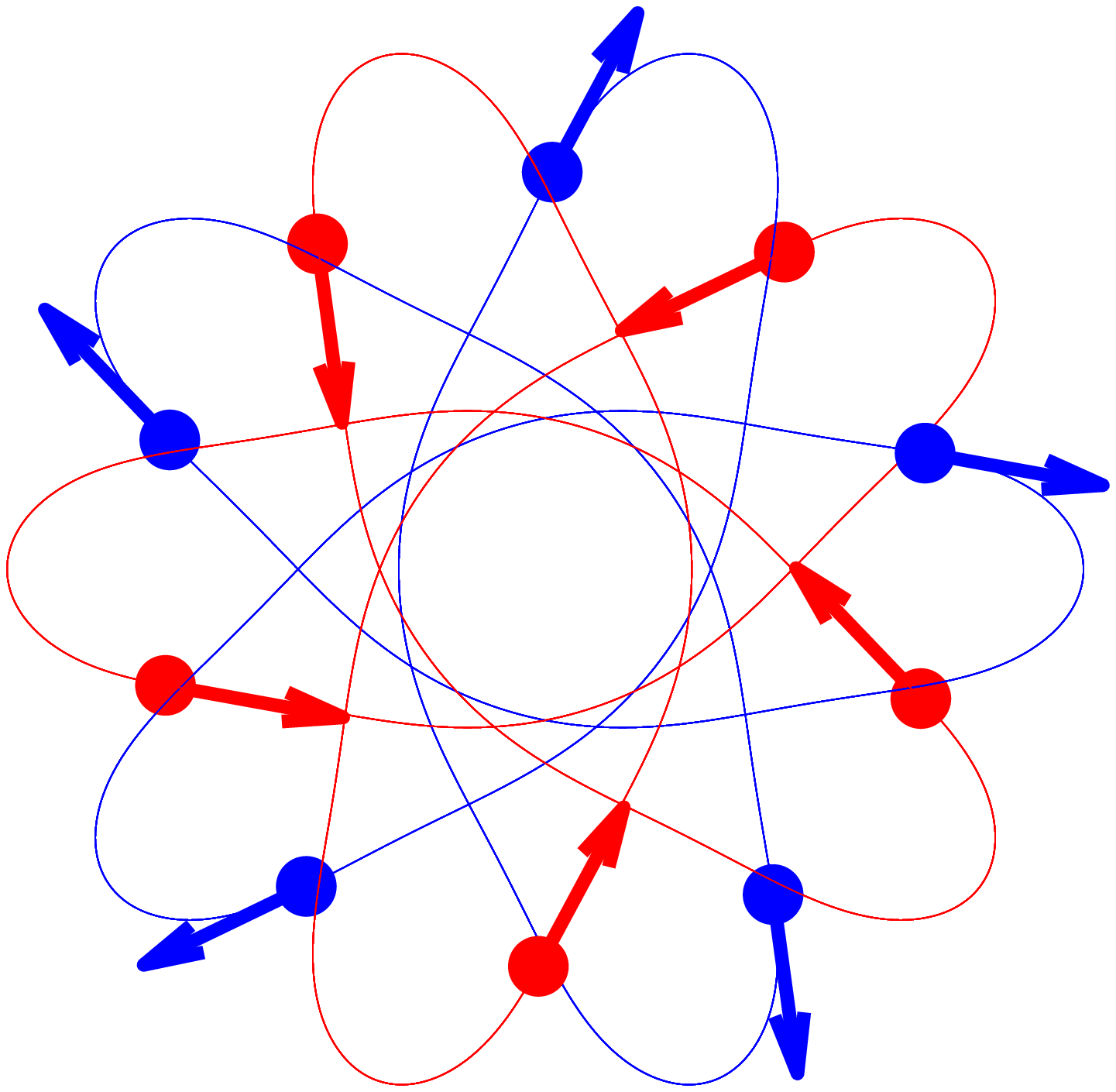}\hspace{0.5cm}
\includegraphics[width=1.8cm]{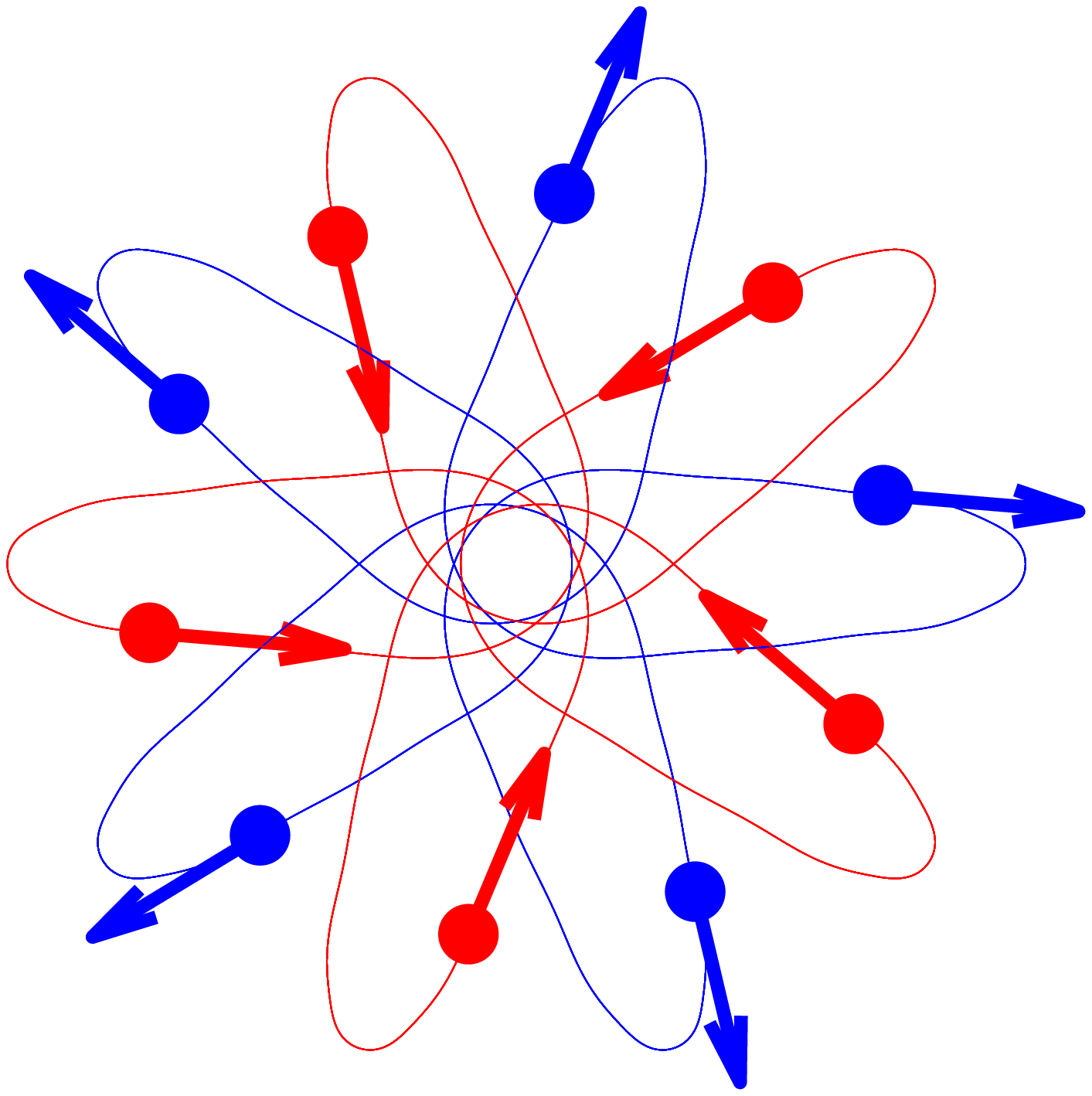}\hspace{0.5cm}
$t=\frac{dT}{n}$

\includegraphics[width=1.8cm]{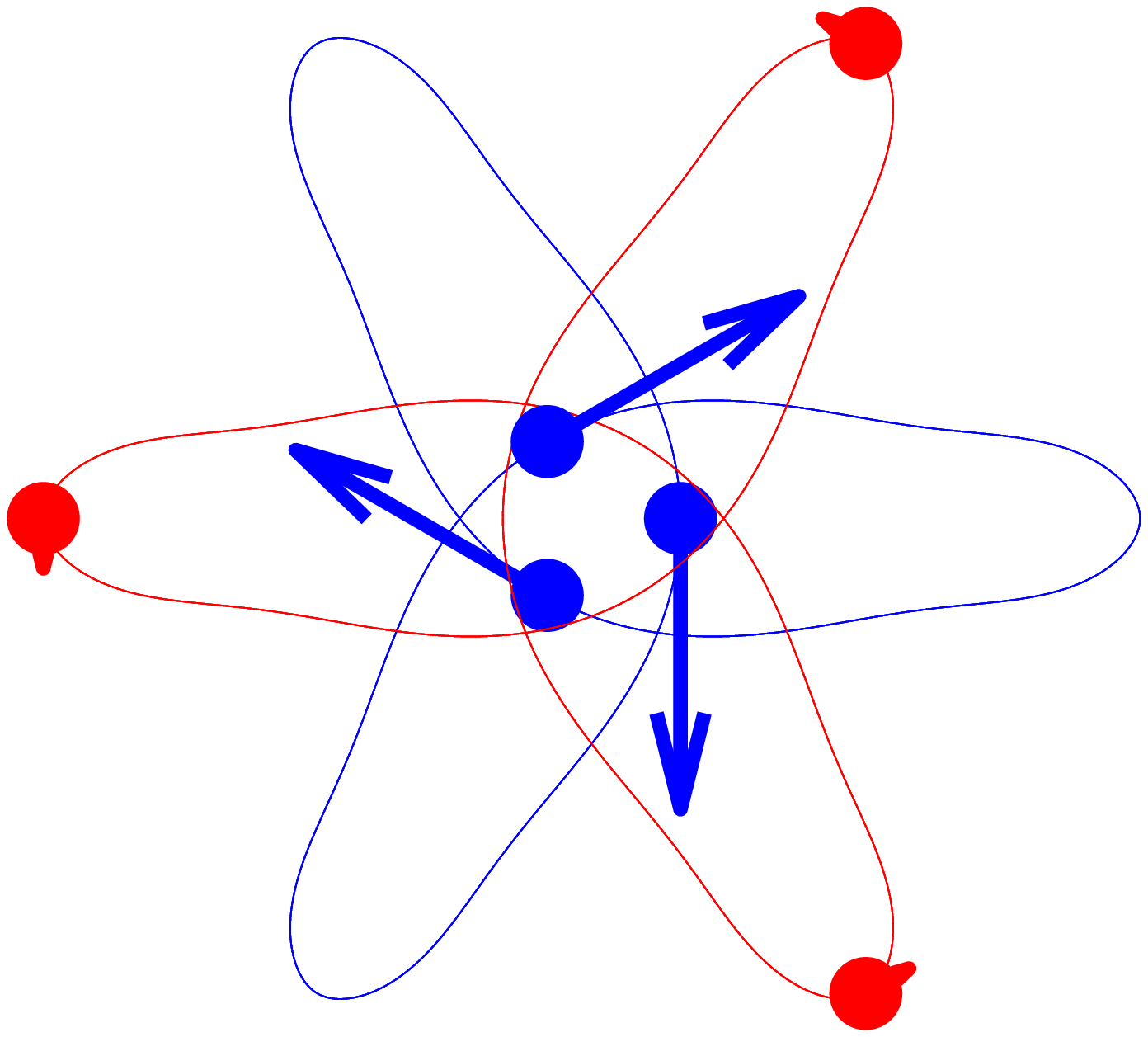}\hspace{0.5cm}
\includegraphics[width=1.8cm]{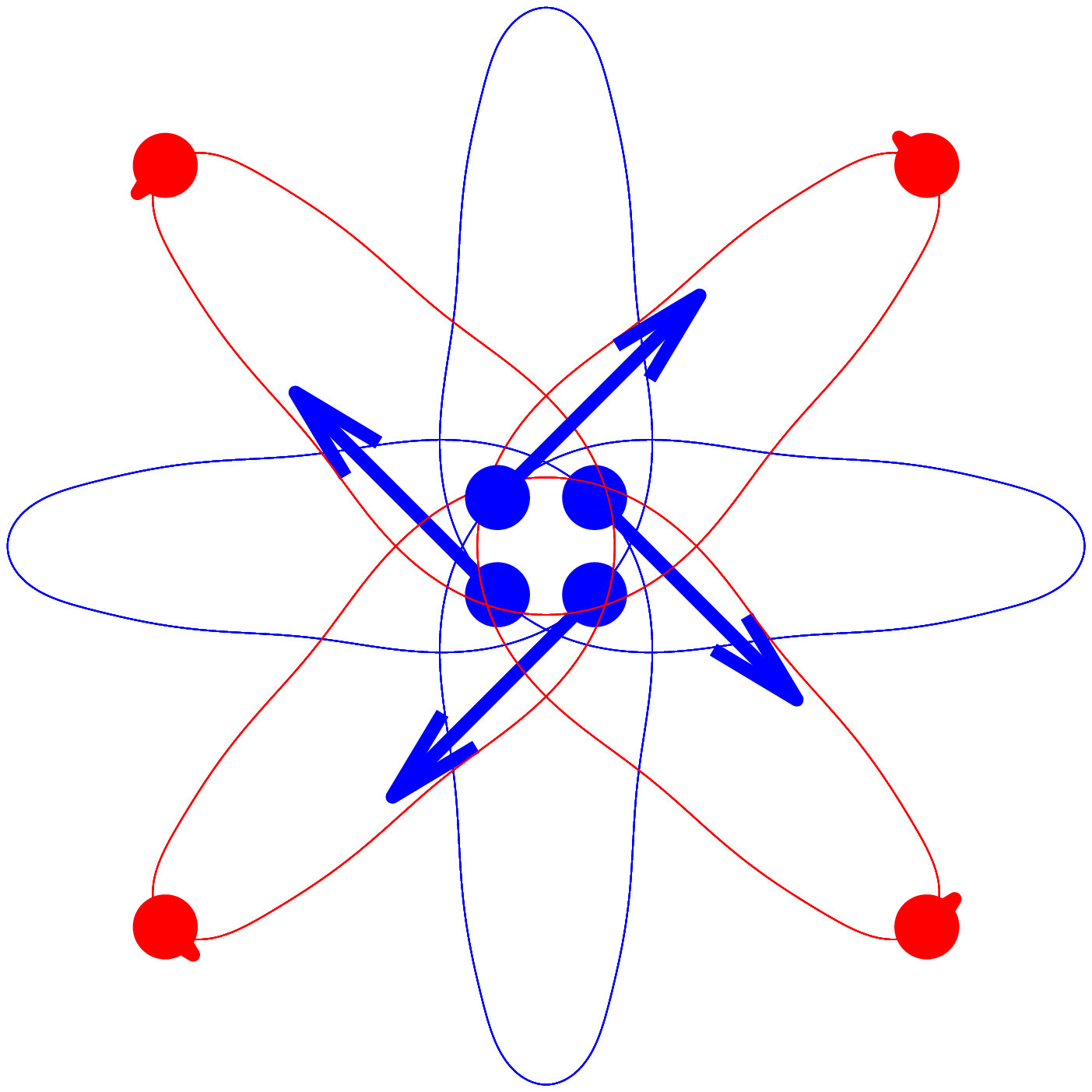}\hspace{0.5cm}
\includegraphics[width=1.8cm]{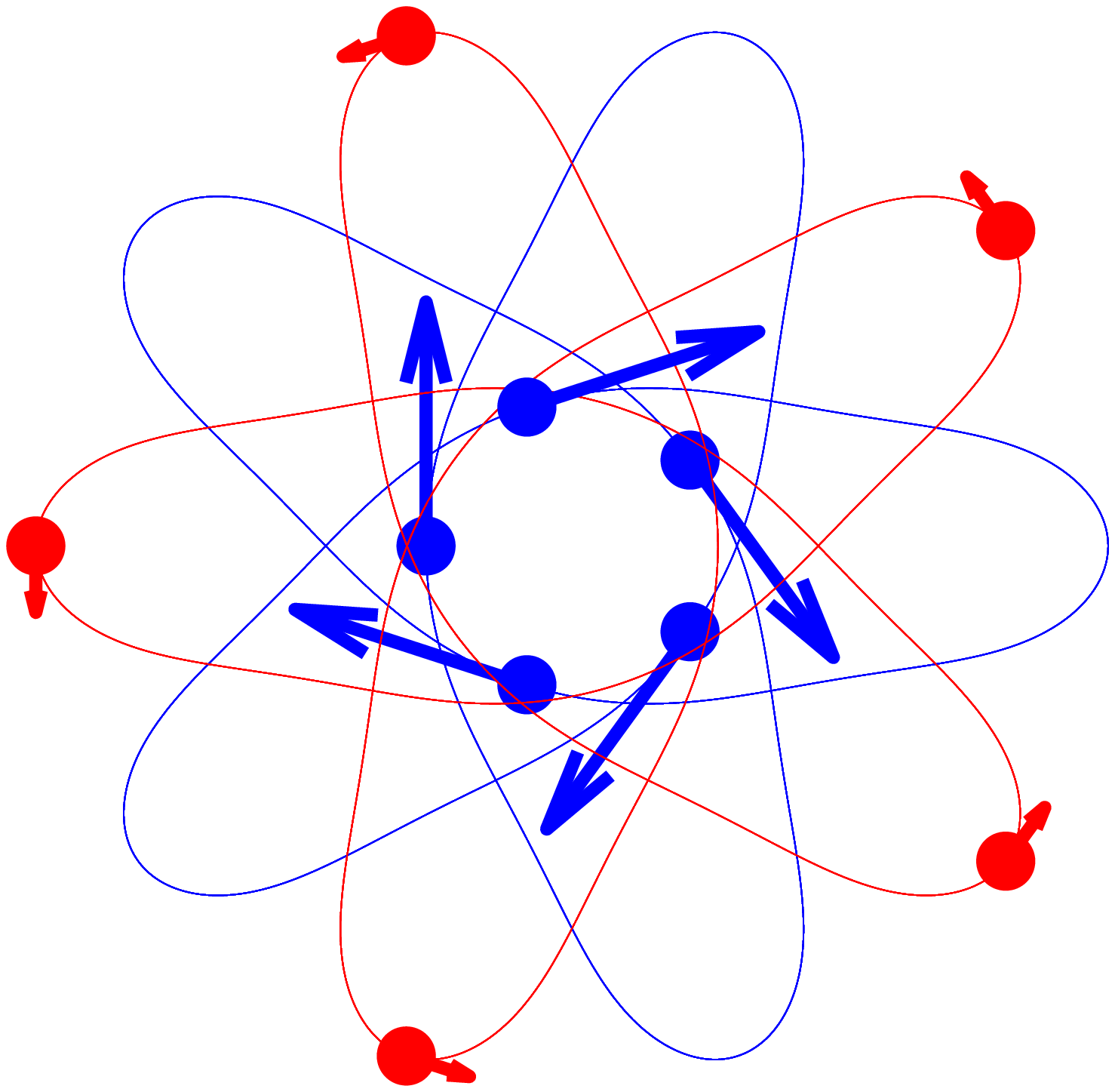}\hspace{0.5cm}
\includegraphics[width=1.8cm]{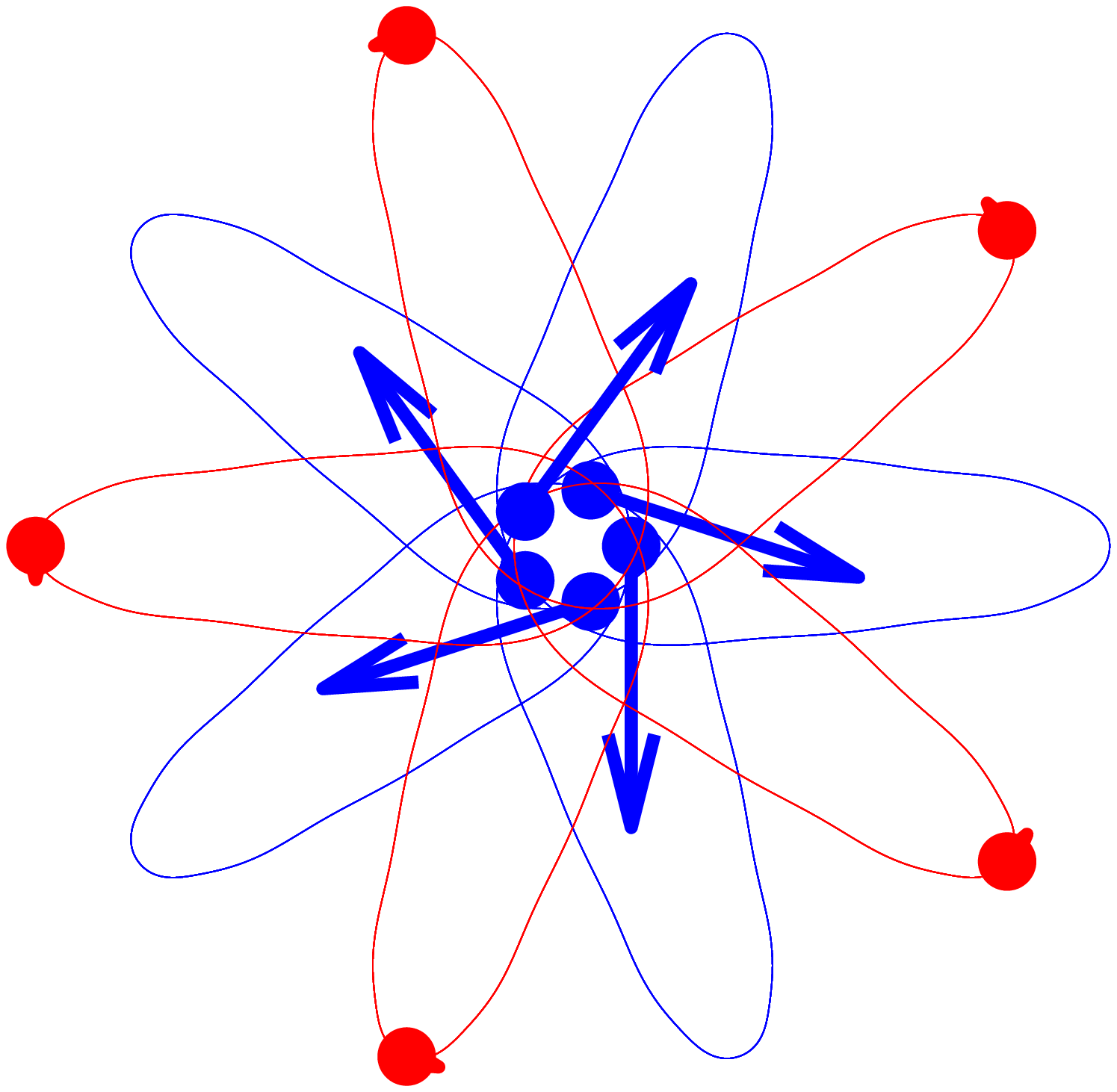}\hspace{0.5cm}
$t=\frac{3dT}{4n}$

\includegraphics[width=1.8cm]{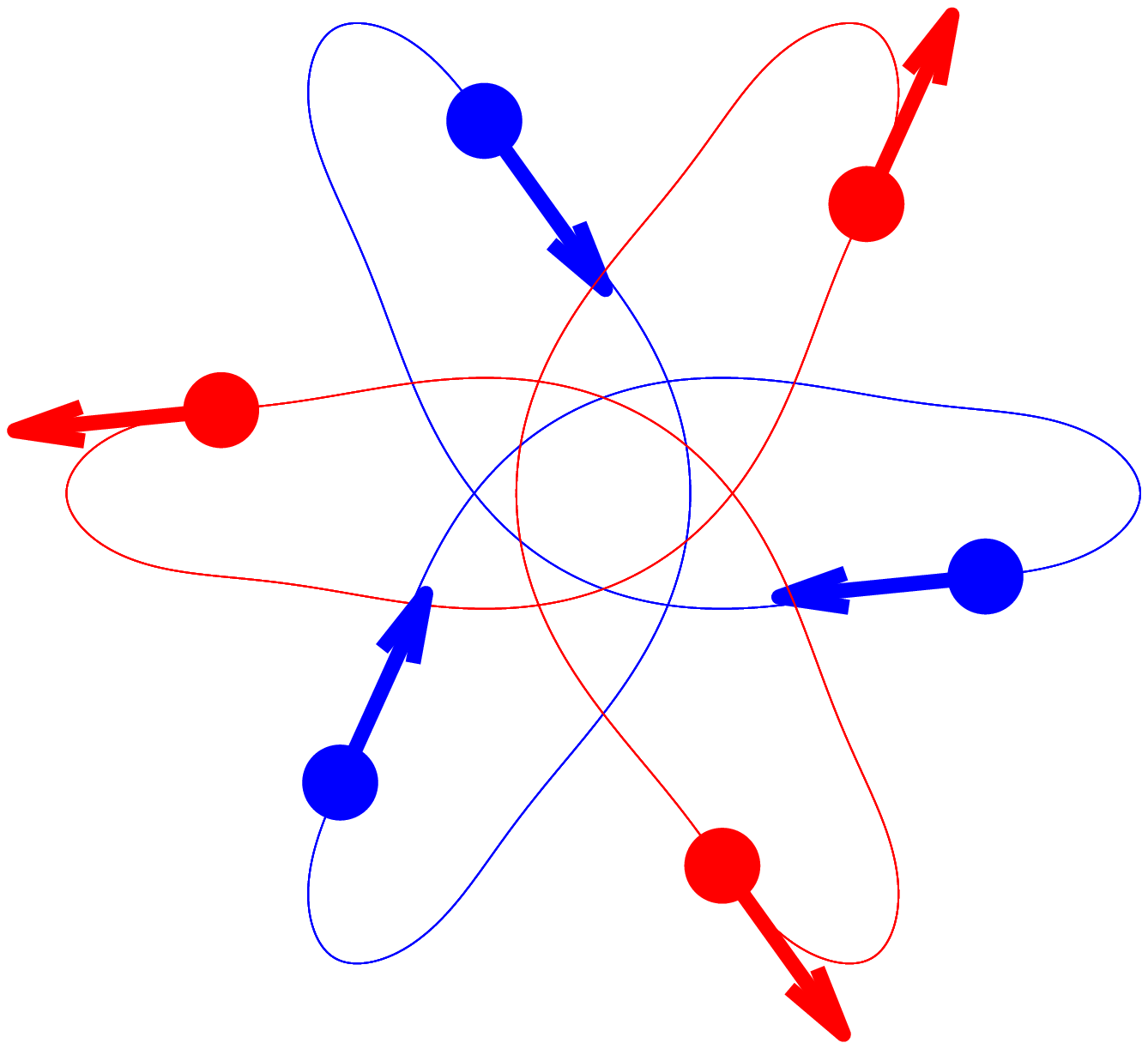}\hspace{0.5cm}
\includegraphics[width=1.8cm]{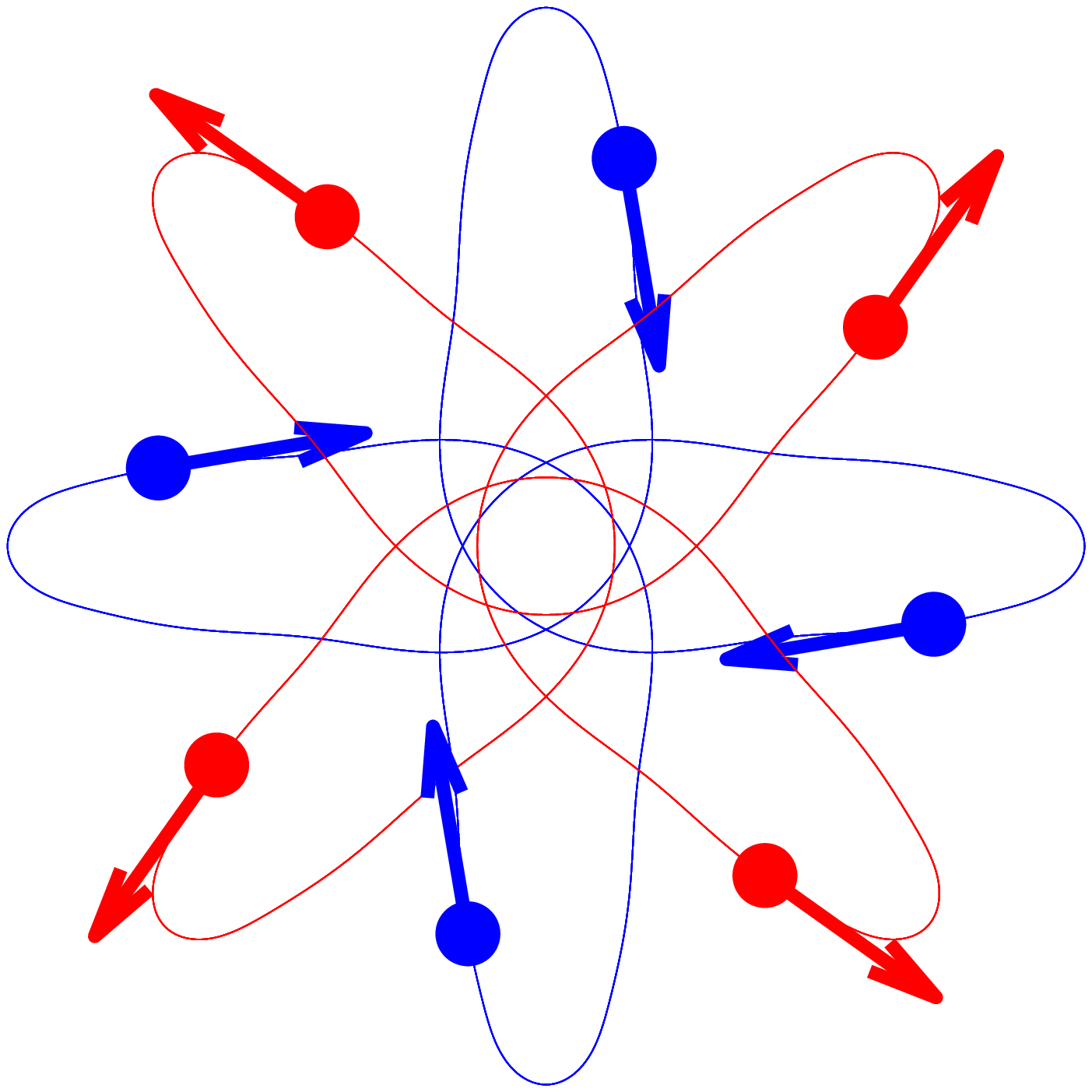}\hspace{0.5cm}
\includegraphics[width=1.8cm]{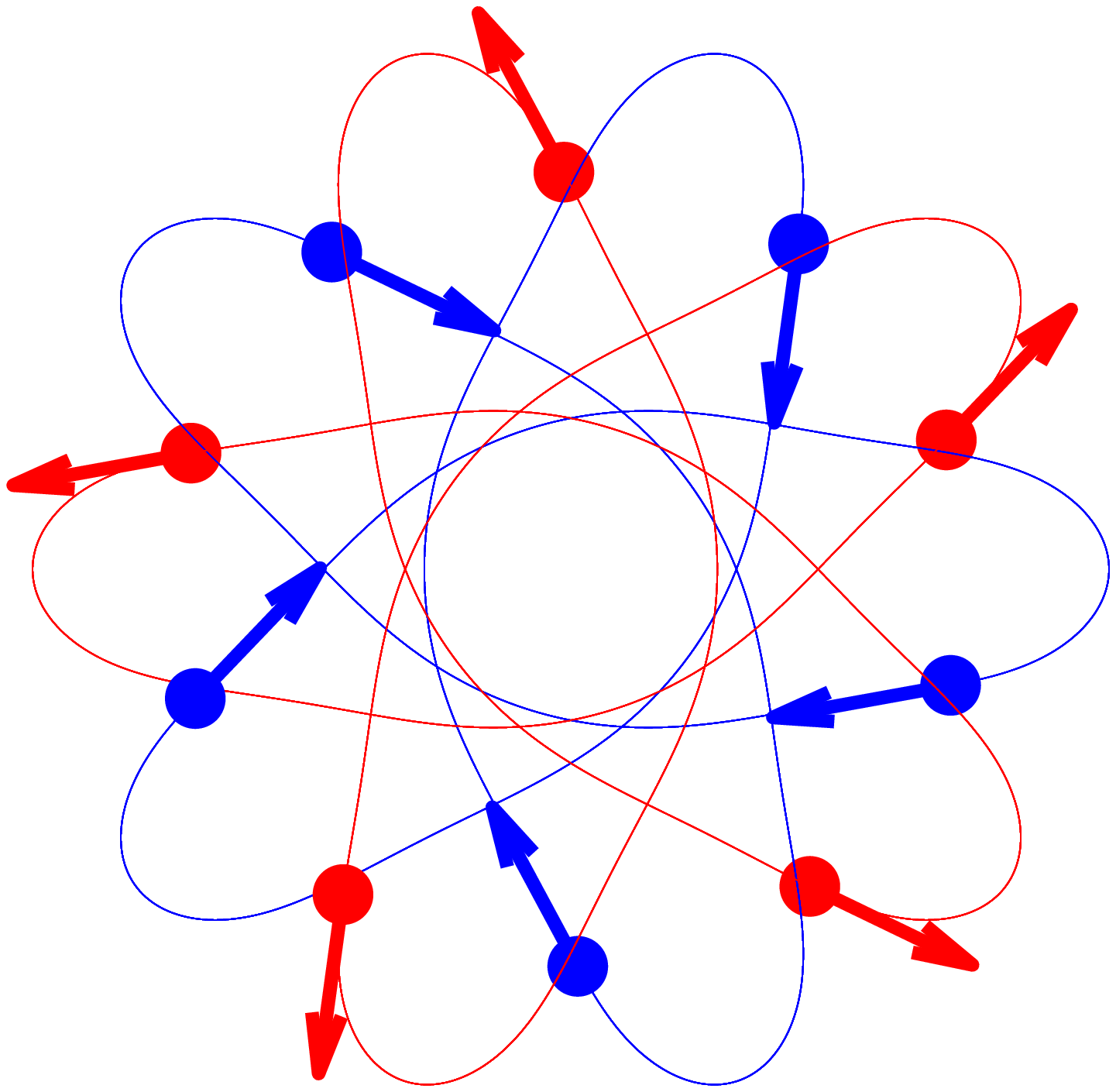}\hspace{0.5cm}
\includegraphics[width=1.8cm]{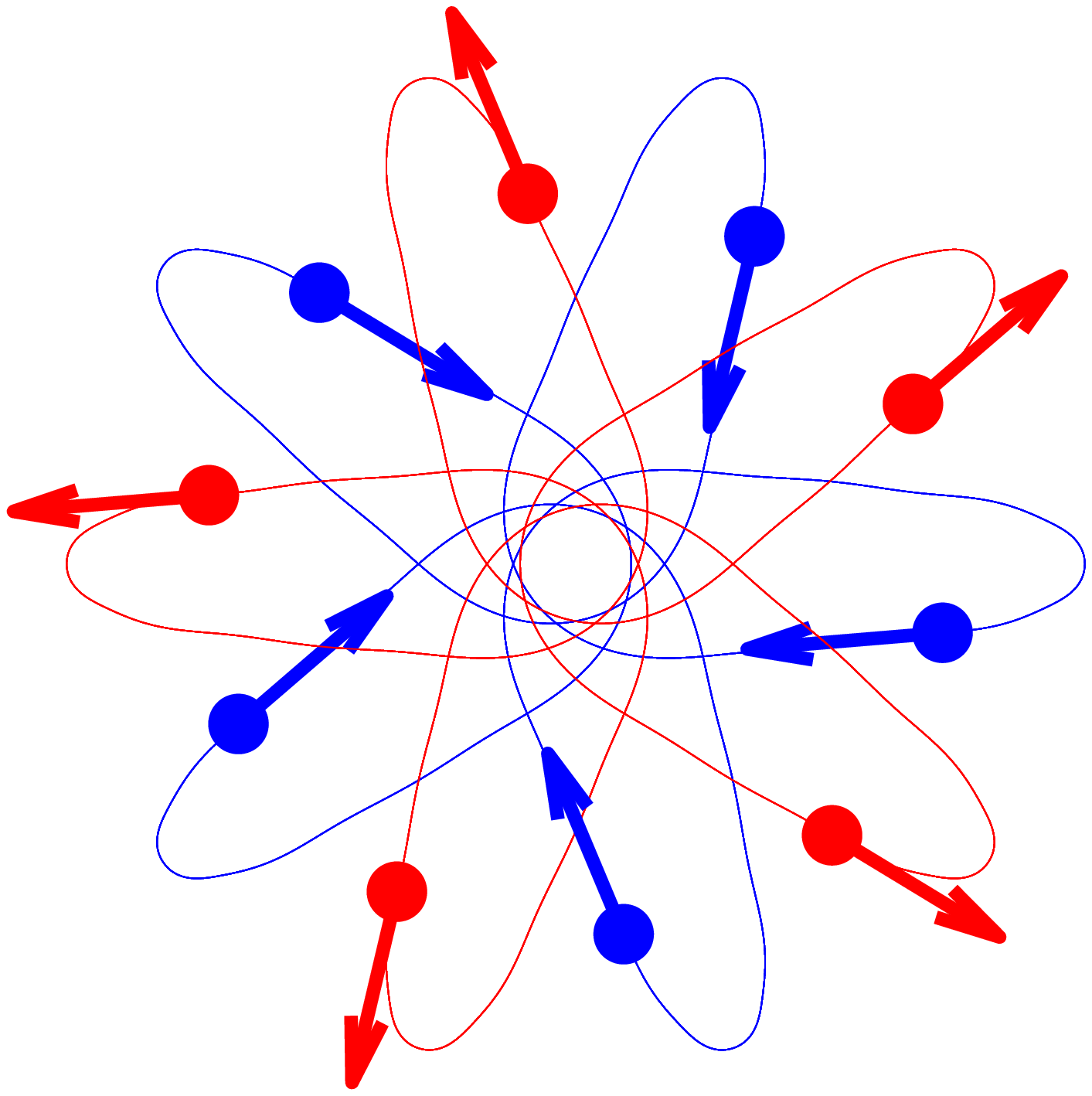}\hspace{0.5cm}
$t=\frac{dT}{2n}$

\includegraphics[width=1.8cm]{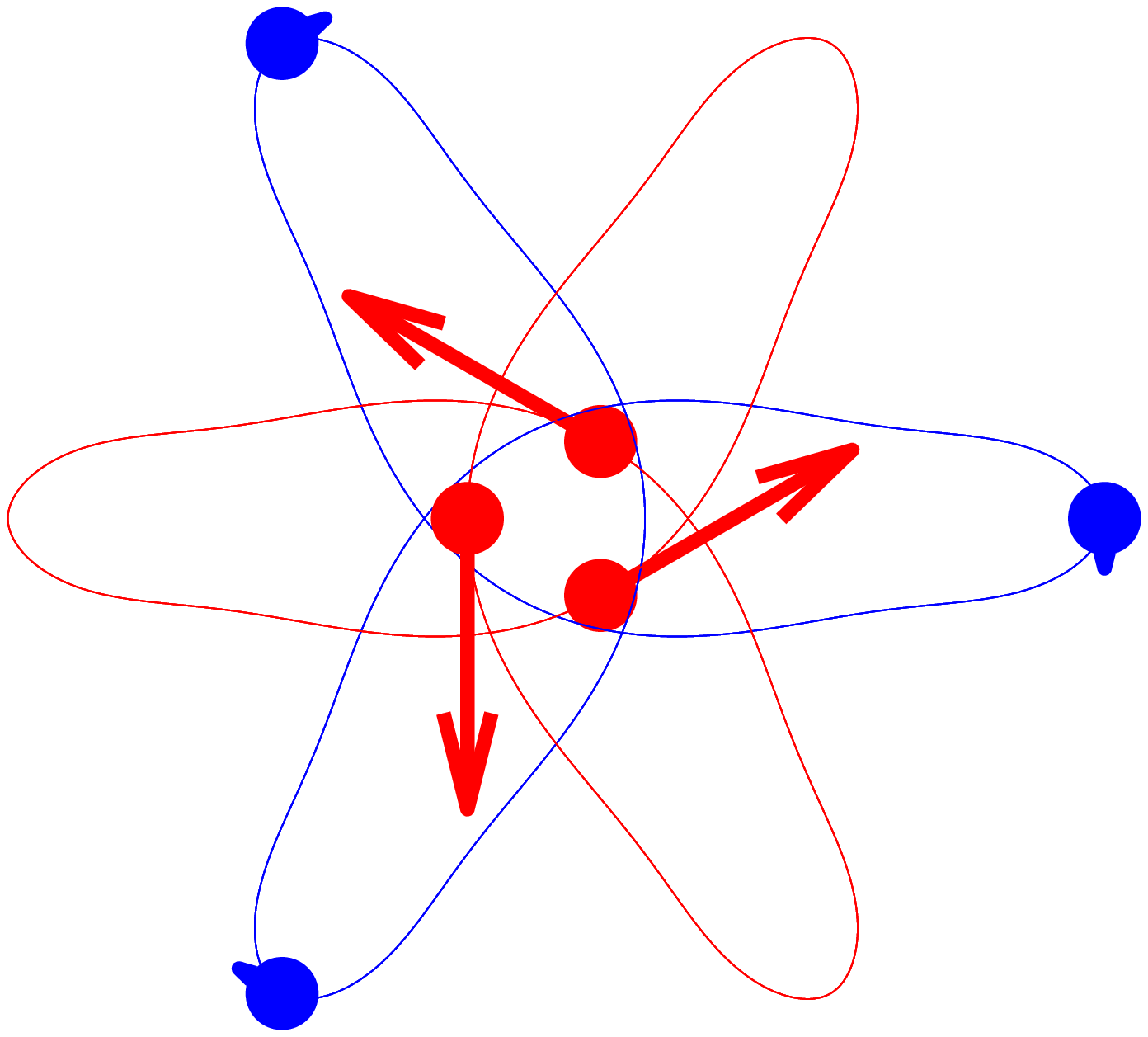}\hspace{0.5cm}
\includegraphics[width=1.8cm]{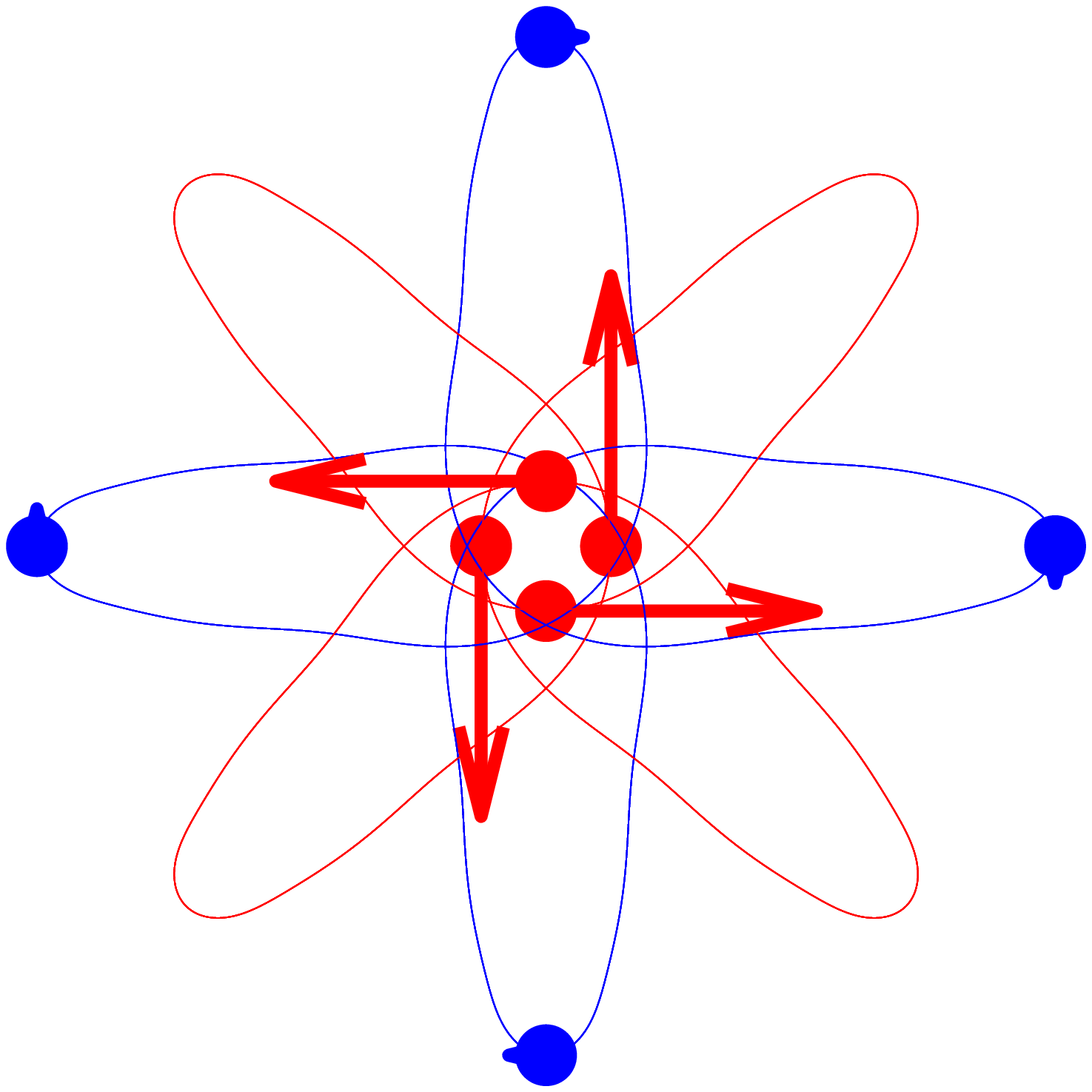}\hspace{0.5cm}
\includegraphics[width=1.8cm]{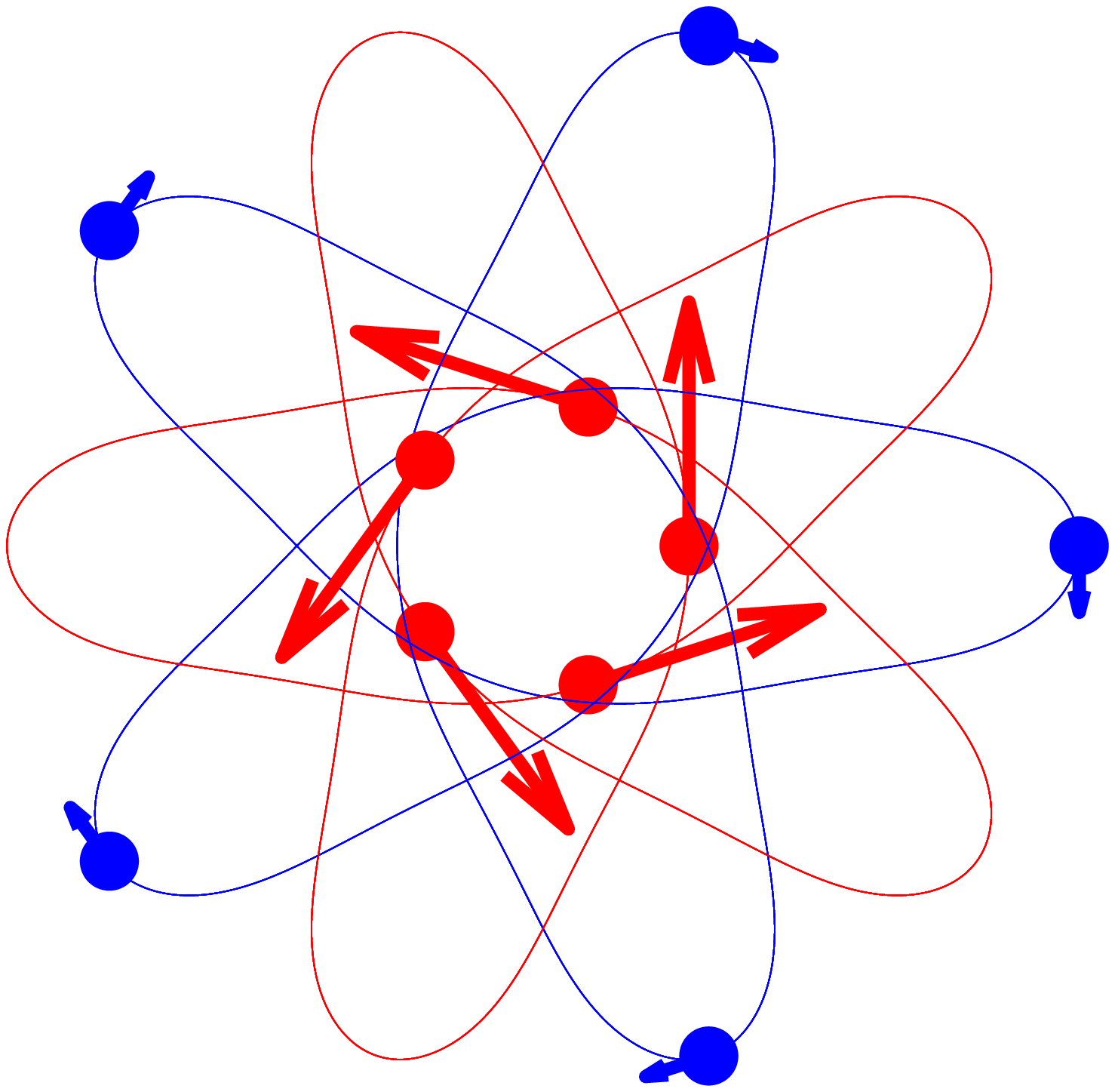}\hspace{0.5cm}
\includegraphics[width=1.8cm]{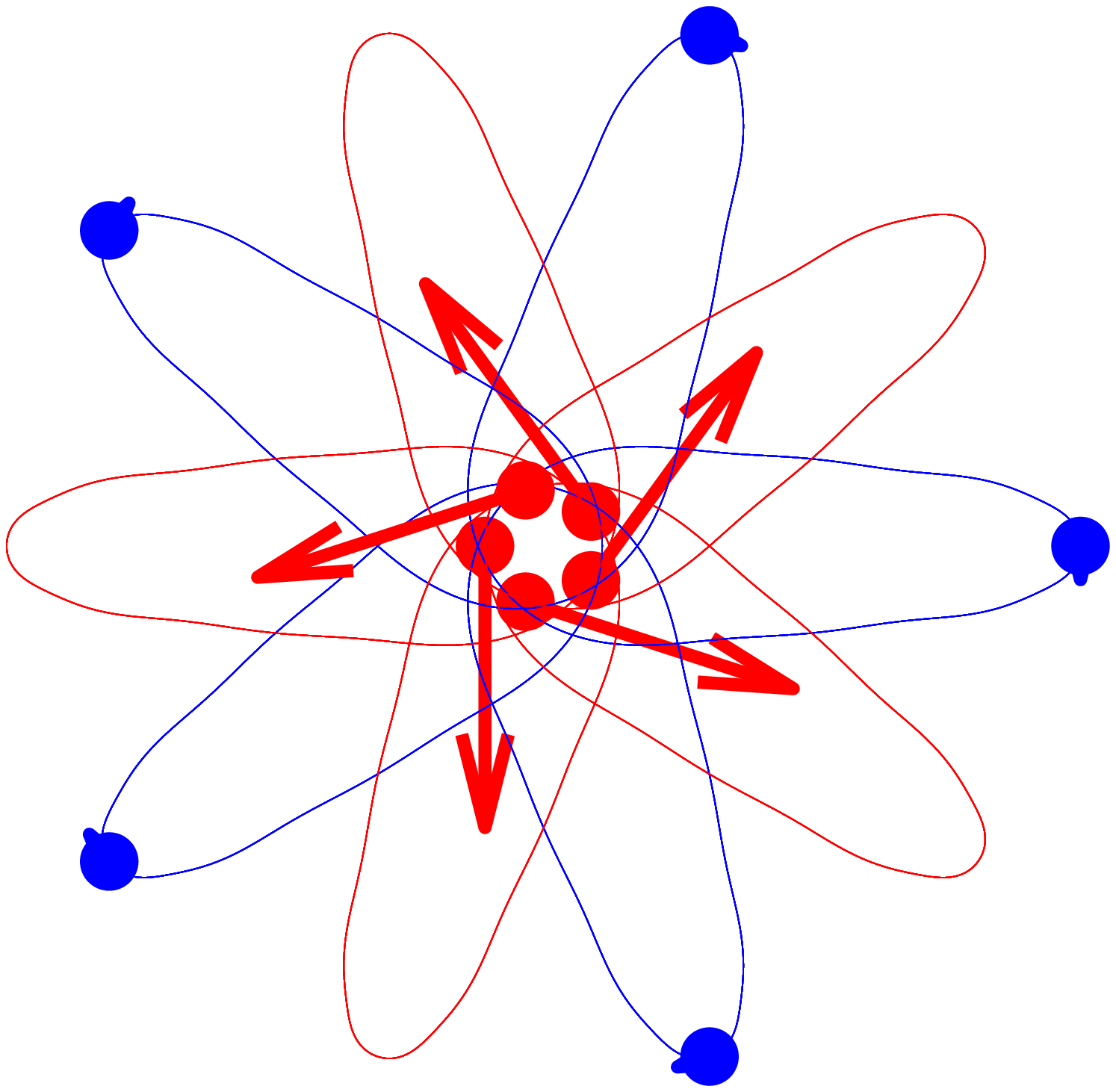}\hspace{0.5cm}
$t=\frac{dT}{4n}$

\includegraphics[width=1.8cm]{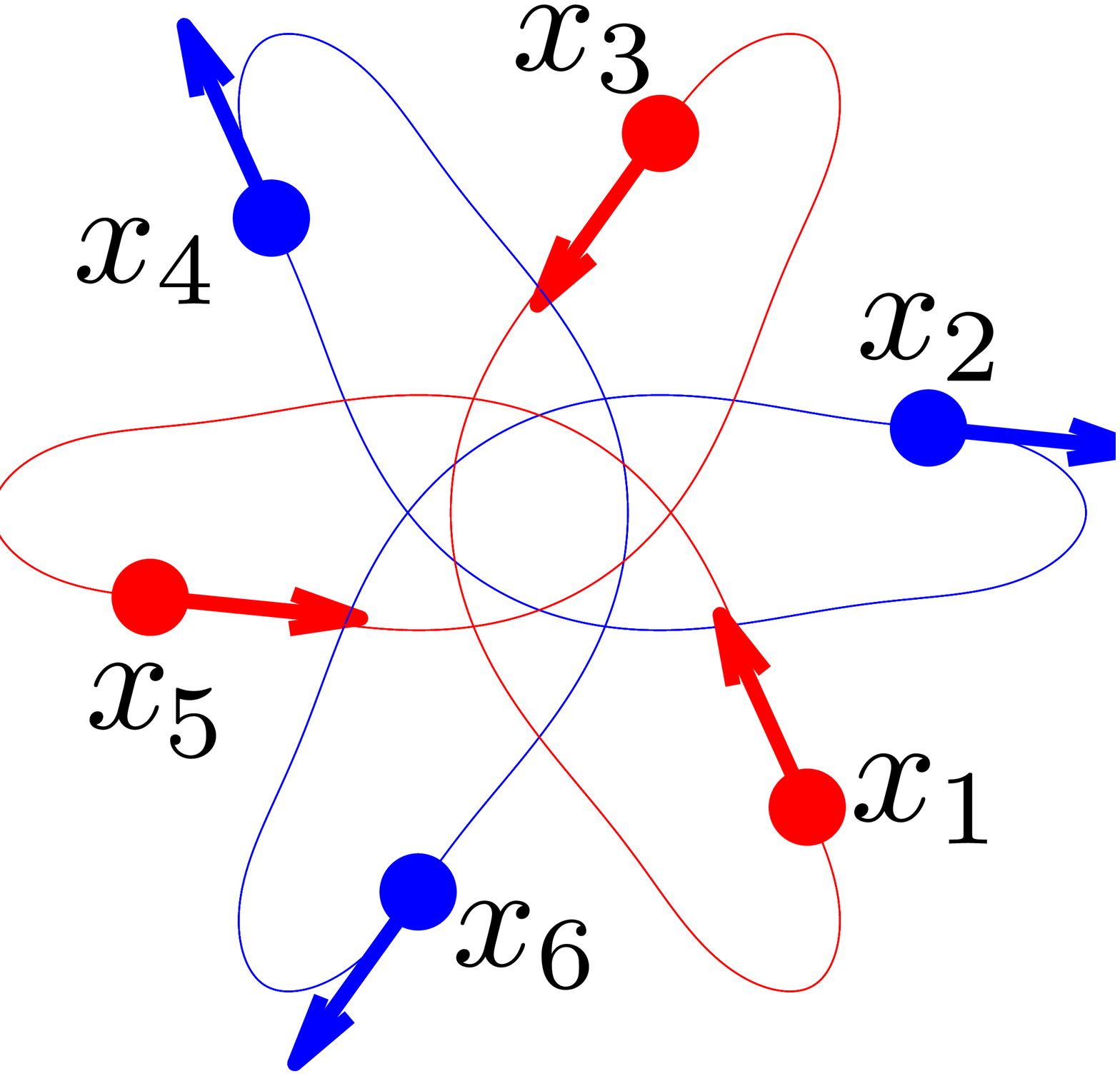}\hspace{0.5cm}
\includegraphics[width=1.8cm]{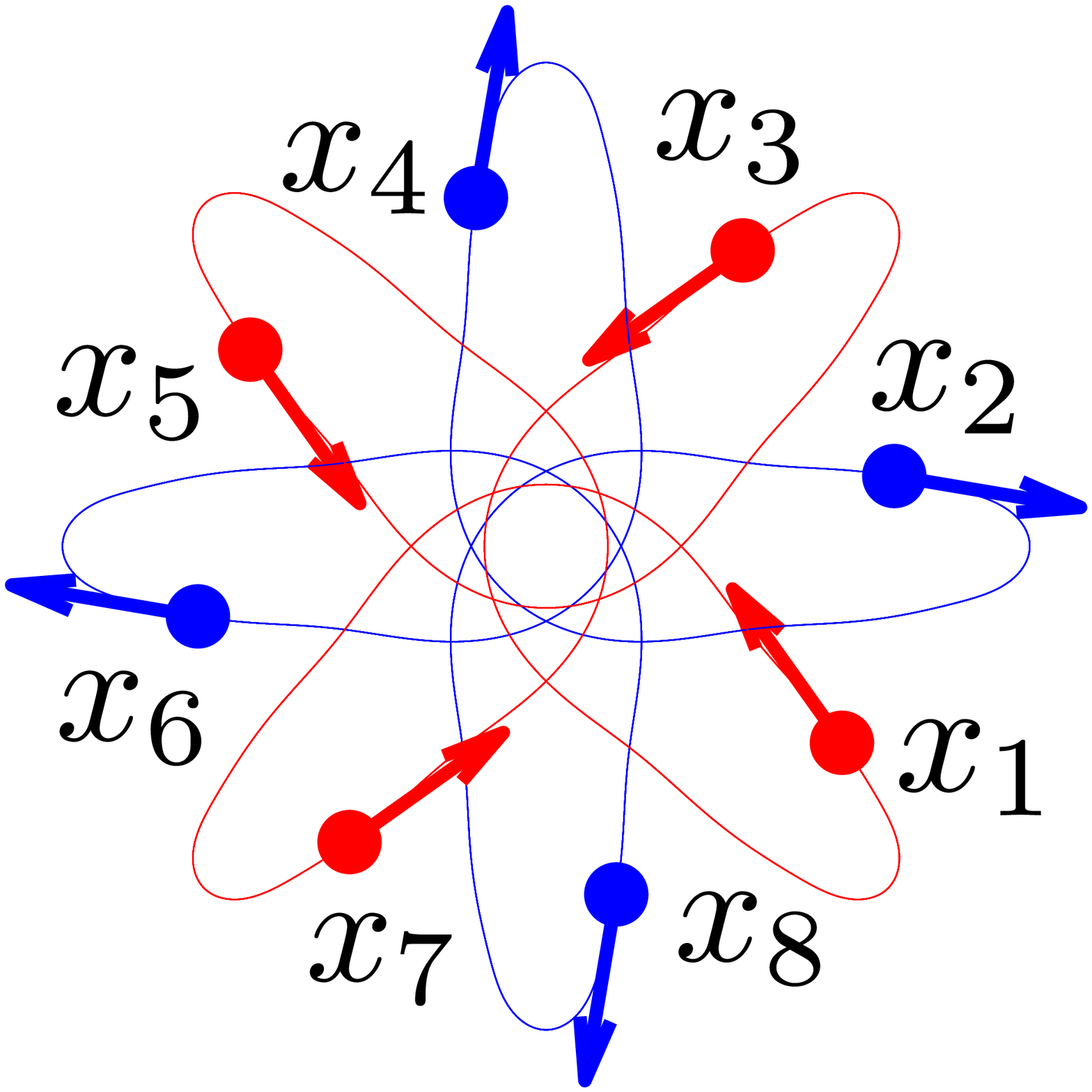}\hspace{0.5cm}
\includegraphics[width=1.8cm]{BRn=5_p=3_1.eps}\hspace{0.5cm}
\includegraphics[width=1.8cm]{BRn=5_p=4_1.eps}\hspace{0.5cm}
$t=0$

(1) \hspace{1.7cm}  (2) \hspace{1.7cm}  (3) \hspace{1.7cm}  (4) \hspace{2.0cm}

\caption{
(1) ${\bm x}_{3,2}(t)$. 
(2) ${\bm x}_{4,3}(t)$. 
(3) ${\bm x}_{5,3}(t)$. 
(4) ${\bm x}_{5,4}(t)$.
}
\label{fig_num_orb}
\end{center}

\end{figure}

\bibliographystyle{alpha} 
\bibliography{2n-braids}

\end{document}